\documentclass[11pt]{article}
\usepackage[margin=1in]{geometry}
\usepackage{amsmath,amsfonts,amssymb}
\usepackage{amsthm}
\usepackage{algorithm}
\usepackage[noend]{algpseudocode}
\usepackage{graphicx}
\usepackage{tikz}
\usetikzlibrary{arrows.meta,positioning,calc,decorations.pathmorphing}
\usepackage{array}
\usepackage{xcolor}
\definecolor{niceblue}{RGB}{40,90,200}
\usepackage[normalem]{ulem}

\definecolor{explaincolor}{RGB}{0,120,80}

\usepackage{placeins}
\usepackage[hidelinks]{hyperref}

\theoremstyle{plain}
\newtheorem{theorem}{Theorem}
\newtheorem{lemma}{Lemma}
\newtheorem{corollary}{Corollary}
\newtheorem{proposition}{Proposition}
\theoremstyle{definition}
\newtheorem{definition}{Definition}
\newtheorem{assumption}{Assumption}
\theoremstyle{remark}
\newtheorem{remark}{Remark}

\newcommand{\prox}{\mathrm{prox}}
\newcommand{\argmin}{\operatorname*{arg\,min}}

\newcommand{\R}{\mathbb{R}}
\newcommand{\norm}[1]{\left\|#1\right\|}
\newcommand{\abs}[1]{\left|#1\right|}
\DeclareMathOperator{\dist}{dist}
\DeclareMathOperator{\dom}{dom}
\newcolumntype{L}[1]{>{\raggedright\arraybackslash}p{#1}}
\makeatletter
\providecommand{\theHALG@line}{}
\renewcommand{\theHALG@line}{\thealgorithm.\arabic{ALG@line}}
\makeatother

\title{An Adaptive Proximal Framework for Weakly Convex Optimization with Unknown Parameter\\[4pt]
\large Deterministic and Heavy-Tailed Stochastic Guarantees}
\author{Miaolan Xie\thanks{Edwardson School of Industrial Engineering, Purdue University, West Lafayette, IN, USA. \texttt{miaolanx@purdue.edu}, \url{https://miaolan.github.io/}}}
\date{}

\begin{document}
\maketitle

\begin{abstract}
Many nonsmooth, nonconvex objectives in learning and signal recovery are $\rho$-weakly convex. We minimize such a function in deterministic and stochastic settings when the weak-convexity parameter $\rho$ is unknown. The objective is not required to be globally Lipschitz continuous or smooth. We propose the Adaptive Prox-Guided Scheme (APS), a single-trial adaptive proximal framework that adapts the proximal parameter online and bidirectionally through a descent test, allowing it to exploit favorable local structure. 

The framework covers adaptive proximal-point, prox-linear, and proximal-gradient methods from a model-based viewpoint. In the deterministic proximal setting, APS obtains an $\mathcal{O}(\varepsilon^{-2})$ iteration complexity for producing an $\varepsilon$-subgradient stationary point. In the stochastic setting, APS achieves a high-probability $\mathcal O(\varepsilon^{-2})$ iteration bound for driving the Moreau-envelope gradient below $\varepsilon$. This result holds under deliberately weak oracle assumptions: the function-difference estimates may be biased and heavy-tailed, and the stochastic candidate oracle need only be sufficiently accurate with constant probability when the proximal parameter is small, and can be arbitrary otherwise.
\end{abstract}

\medskip
\noindent\textbf{Keywords:} Weakly convex optimization \textperiodcentered\ Stochastic optimization \textperiodcentered\ Proximal method \textperiodcentered\ Model-based methods \textperiodcentered\ Adaptive algorithms \textperiodcentered\ Moreau envelope \textperiodcentered\ High-probability complexity \textperiodcentered\ Iteration complexity \textperiodcentered\ Heavy-tailed noise \textperiodcentered\ Nonsmooth nonconvex optimization

\bigskip

\section{Introduction}\label{sec:intro}
Nonsmooth nonconvex objectives arise throughout modern machine learning,
statistics, and signal processing. Weak convexity provides a broad structural
class for such problems while retaining enough regularity to support finite-time
stationarity guarantees. 
A proper, lower semicontinuous function $f:\mathbb R^d\to\mathbb R{\cup\{+\infty\}}$ is
$\rho$-weakly convex if
$x\mapsto f(x)+\frac{\rho}{2}\|x\|^2$ is convex.
This class contains all (possibly nonsmooth) convex functions and all smooth functions with
Lipschitz continuous gradients, and it includes composite objectives
$f(x)=h(c(x))$ with $h$ convex and Lipschitz and $c$ smooth with Lipschitz
Jacobian. Examples include phase retrieval~\cite{DuchiRuan2018,EldarMendelson2014,DuchiRuanPhase}, robust PCA and low-rank matrix recovery~{\cite{CandesRPCA2011,ChandrasekaranRankSparsity2011,CharisopoulosLowRank2021}}, conditional value-at-risk minimization~{\cite{RockafellarUryasev2000,RockafellarUryasev2002}}, and sparse regression~{\cite{BoehmWrightVariableSmoothing}}.
We study
\[
    \min_{x\in\mathbb R^d} f(x),
\]
where \(f\) is weakly convex and bounded below, in both deterministic and
stochastic settings. 

The central difficulty addressed in this paper is that proximal-based methods for weakly convex optimization typically require a safe choice of the proximal parameter, which in turn depends on the weak-convexity constant \mbox{$\rho$}. 
In many applications, \mbox{$\rho$} is unknown.
Because this proximal parameter controls the step size, its value directly governs how fast the method converges, which makes it central for algorithm efficiency. The standard remedy is to tune it by hand, but tuning is fragile: chosen too large, the steps become unstable and the iterates may diverge; chosen too small, the steps are over-regularized and progress is needlessly slow. Estimating or upper-bounding $\rho$ is a natural first attempt~\cite{DuchiRuanPhase}, but any global estimate is often overly conservative relative to the local curvature that can be exploited for faster progress. A monotonically decreasing schedule, on the other hand, can never increase the parameter again to exploit the benign regions encountered along the way. 
Moreover, existing parameter-free algorithms typically require additional assumptions, such as Lipschitz continuity or smoothness.

We introduce the Adaptive Prox-Guided Scheme (APS), a general proximal-based framework that addresses all of these issues simultaneously: it adapts the proximal parameter \emph{dynamically, in response to the algorithm's observed progress} -- raising it where the local geometry is favorable and lowering it only when necessary. It requires neither knowledge of $\rho$ nor any Lipschitz continuity or smoothness assumptions beyond weak convexity.

For weakly convex functions, apart from the classical stationarity measure $\dist(0,\partial f(x))$, another standard stationarity measure is $\|\nabla f_\gamma(x)\|$, which is related to the \emph{Moreau envelope} and the \emph{proximal map} \cite{Moreau1965,Rockafellar1976}:
\[
    f_\gamma(x):=\inf_y\Bigl\{f(y)+\tfrac{1}{2\gamma}\|y-x\|^2\Bigr\},\qquad
    \prox_{\gamma f}(x)\in\argmin_y\Bigl\{f(y)+\tfrac{1}{2\gamma}\|y-x\|^2\Bigr\}.
\]
When $\gamma\rho<1$, the proximal map is single-valued, the envelope $f_\gamma$ is $C^1$-smooth, and $\nabla f_\gamma(x)=\gamma^{-1}\bigl(x-\prox_{\gamma f}(x)\bigr)$. Driving $\|\nabla f_\gamma(x)\|$ to zero certifies that $x$ is close to a point that is approximately stationary for $f$. This envelope-based viewpoint underlies modern complexity results for weakly convex problems, both deterministic and stochastic \cite{Drusvyatskiy2018survey,DDStochModelBased,DavisGrimmerProximallyGuided,BoehmWrightVariableSmoothing}. 

Proximal-based methods operate by repeatedly solving the regularized subproblem $\min_y\{f(y)+\tfrac{1}{2\gamma}\|y-x_k\|^2\}$ and updating $x_{k+1}$ from its solution. Both their convergence theory and the Moreau-envelope stationarity rely on $\gamma$ being chosen on a \emph{safe} scale relative to $\rho$. For the analysis we use the conservative safe threshold
\[
    \bar\gamma := \frac{1}{2\rho},
\]
and call parameters
\(\gamma\le\bar\gamma\) safe. Our algorithm never uses this threshold; it is only an analytical boundary separating the regime where
standard proximal theory applies from the regime where the subproblem may be
nonconvex. We use the more conservative threshold $1/(2\rho)$ to obtain uniform constants in the analysis. In practice $\rho$ is rarely known: it depends on global properties of $f$, varies across data distributions, and is expensive to estimate. Overestimating $\rho$ yields conservatively small proximal parameters and slow progress, while  underestimating it can push the method into the unsafe regime, where the analysis breaks down.

The unknown safe threshold already creates a difficulty in the deterministic
setting: the algorithm cannot know whether a given proximal parameter \(\gamma_k\) places the
subproblem in the safe regime, where the subproblem is strongly convex. Figure~\ref{fig:two_regime_difficulty} illustrates this dichotomy. In the stochastic setting, this
difficulty is amplified. 
The proximal subproblem can only be solved approximately, and the function decrease must itself be estimated from noisy samples. Thus the algorithm must decide
whether to accept a candidate and how to update \(\gamma_k\) using only directly observable, noisy descent information, without access to the true subproblem gap or the safe/unsafe boundary. 

\begin{figure}[!htbp]
    \centering
    \definecolor{niceblue}{RGB}{40,90,200}
    \resizebox{\linewidth}{!}{%
    \begin{tikzpicture}[
        every node/.style={font=\footnotesize},
        safebox/.style={
            draw=niceblue, thick, rounded corners=4pt,
            fill=niceblue!8, text width=5.7cm, align=left,
            inner sep=8pt
        },
        unsafebox/.style={
            draw=niceblue, thick, rounded corners=4pt,
            fill=niceblue!8, text width=5.7cm, align=left,
            inner sep=8pt
        },
    ]
        \node[safebox, anchor=south] at (-3.45, 3.4) {%
            \textbf{Safe regime}\quad$\gamma \le \bar\gamma$\\[3pt]
            $\bullet$\;\,Subproblem is strongly convex\\
            $\bullet$\;\,Standard prox-point theory applies\\
            $\bullet$\;\,Solvable with guarantee
        };
        \node[unsafebox, anchor=south] at (3.45, 3.4) {%
            \textbf{Unsafe regime}\quad$\gamma > \bar\gamma$\\[3pt]
            $\bullet$\;\,Subproblem may be nonconvex\\
            $\bullet$\;\,Standard theory breaks down\\
            $\bullet$\;\,No accuracy guarantee
        };

        \draw[->, very thick] (-7, 2.9) -- (7, 2.9) node[right] {$\gamma$};
        \draw[very thick] (-6.8, 2.78) -- (-6.8, 3.02);
        \node[below=2pt] at (-6.8, 2.9) {$0$};

        \draw[very thick] (0, 2.75) -- (0, 3.05);
        \node[below=2pt, font=\footnotesize\bfseries] at (0, 2.78) {$\bar\gamma = 1/(2\rho)$};
        \node[below=2pt, font=\scriptsize\itshape, gray!55!black] at (0, 2.40) {(algorithm cannot see the safe vs unsafe boundary)};
    \end{tikzpicture}%
    }
    \caption{The two-regime difficulty. The proximal subproblem behaves qualitatively differently across the threshold $\bar\gamma=1/(2\rho)$, but the algorithm cannot observe this threshold when $\rho$ is unknown. }
    \label{fig:two_regime_difficulty}
\end{figure}

APS treats the safe threshold as an analysis device rather than as an input.
At iteration $k$, the method computes one candidate point $x_k^+$ for the
current proximal subproblem, estimates the function decrease
$f(x_k)-f(x_k^+)$, and accepts the candidate only if the observed decrease is
large enough relative to the proximal displacement. Acceptance increases
$\gamma_k$, allowing larger and less regularized steps; rejection decreases
$\gamma_k$, pushing the method back toward the safe regime.
The analysis separates the run into two regimes. On safe iterations,
$\gamma_k\le \bar\gamma$, the proximal displacement
$d_k=\gamma_k^{-1}(x_k-x_k^+)$ can be related to the reference Moreau-envelope
gradient. On unsafe iterations, no convexity or proximal-oracle accuracy is
assumed; these iterations are controlled only through observable descent and
the induced dynamics of $\gamma_k$. Intuitively, accepted unsafe steps are paid for by observed decrease,
while rejected unsafe iterations reduce \(\gamma_k\) and therefore drive the method
back toward the safe regime. Careful analysis linking these two regimes is the main mechanism behind both the deterministic and stochastic guarantees.

The bidirectional update in APS is different from a fixed monotone stepsize schedule and from a one-sided backtracking rule. Since the weak
convexity parameter is unknown, the algorithm does not know a priori whether the current proximal parameter is too aggressive or too conservative. Small values of $\gamma$ make the proximal subproblems more strongly regularized, but they may also produce overly conservative steps. Larger values of $\gamma$ can yield substantially more progress when the resulting candidate still satisfies
the observed descent test. APS therefore increases $\gamma$ after successful iterations, allowing it to exploit such productive regimes, and decreases $\gamma$ after rejected iterations, returning toward safer, more strongly regularized subproblems. Thus the proximal parameter is learned online from observable function-decrease information rather than fixed in advance or forced
to move in only one direction. \autoref{fig:aps_intro} previews the payoff on a robust phase-retrieval instance: 
APS reaches a lower recovery error than the fixed safe step (left) and removes the narrow tuning window of the proximal parameter altogether (right).

\begin{figure}[tbp]
  \centering
  \includegraphics[width=\linewidth]{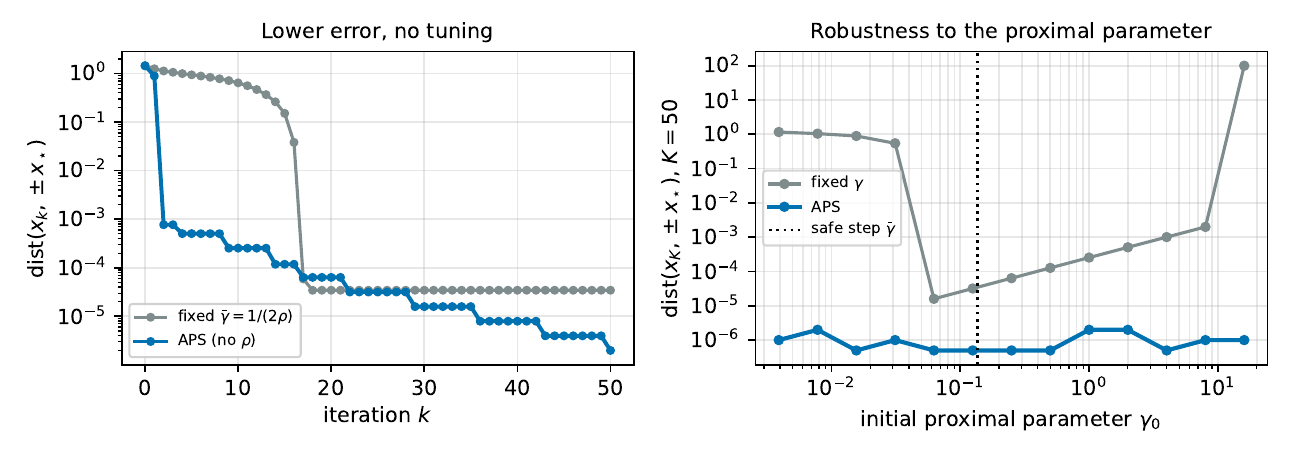}
  \caption{Preview of APS on robust phase retrieval, deterministic setting (full study in \autoref{sec:numerics}). \emph{Left:} APS ($\gamma_0=1$, no $\rho$) reaches a lower recovery error than the fixed safe step $\bar\gamma=1/(2\rho)$. \emph{Right:} robustness to the initial proximal parameter -- the good proximal parameter window is narrow, while APS beats the best fixed $\gamma$ without any tuning.}
  \label{fig:aps_intro}
\end{figure} 

\paragraph{Contributions.}  
We propose the Adaptive Prox-Guided Scheme (APS){ as a general proximal-based framework} for minimizing general deterministic or stochastic weakly convex functions. To the best of our knowledge, APS is the first framework that simultaneously handles an unknown weak-convexity parameter, avoids global Lipschitz or smoothness assumptions beyond weak convexity, and adapts the proximal parameter online and
bidirectionally.
In the stochastic setting, high-probability guarantees are obtained under deliberately weak oracle assumptions.
\begin{itemize}
    \item \emph{Adaptive proximal framework without known $\rho$.}
    APS uses one candidate-oracle call and one function-difference estimate per
    iteration, and updates the proximal parameter bidirectionally through the estimated progress of the algorithm. A single unified scheme serves both the deterministic and stochastic settings: they differ only in the quality and reliability of the oracles used. The safe threshold $\bar\gamma$ is never used by the algorithm, entering only the analysis. 
	\item {\emph{General Model-based framework.}
	APS covers adaptive proximal-point for general weakly convex functions, adaptive prox-linear methods for composite objectives
	$f=h\circ c$, and adaptive proximal-gradient methods for additive objectives $f=g+h$.
	More generally, the analysis also extends to any method that admits a model-based candidate oracle.}

	\item \emph{Two-regime analysis.}
    The analysis separates safe iterations, where the Moreau-gradient proxy can be
    controlled, from unsafe iterations, where no convexity or accuracy guarantee is imposed on the candidate. The analysis does not require the algorithm to identify the safe regime.

    \item \emph{Deterministic guarantee.}
    With exact function differences and an exact proximal oracle on safe calls \mbox{$\gamma\le\bar\gamma$} (and arbitrary output otherwise), APS
    obtains an $\mathcal O(\varepsilon^{-2})$ iteration complexity for
    subgradient stationarity. This matches the standard $\mathcal O(\varepsilon^{-2})$ iteration complexity required to reach $\varepsilon$-stationarity of a general weakly convex function.

    \item \emph{Stochastic guarantee under weak oracle assumptions.} We then analyze APS when the algorithm only has access to biased, heavy-tailed estimates of function differences and a candidate oracle that
    is reasonably accurate only with constant probability on safe calls, and can be arbitrary otherwise. 
	We prove that APS obtains a
    high-probability iteration complexity of $\mathcal O(\varepsilon^{-2})$ for the
    Moreau-envelope stationarity measure. For expected-risk objectives, the
    abstract oracles are realized using paired minibatch differences and
    stochastic-subgradient inner solves. 
\end{itemize}

In Table~\ref{tab:comparison} below, we summarize the main differences between the deterministic and stochastic settings, for the proximal-point version of APS. The more general model-based version of APS is discussed later in \autoref{sec:model_oracles}.
\begin{table}[htbp]
    \centering
    \caption{Comparison of the deterministic and stochastic proximal-point version of APS.}
    \label{tab:comparison}
    \renewcommand{\arraystretch}{1.5}
    \begin{tabular}{|L{2.6cm}|L{5.0cm}|L{5.0cm}|}
        \hline
        \textbf{Feature} & \textbf{Deterministic (Sec.~\ref{sec:det_specialization})} & \textbf{Stochastic (Sec.~\ref{sec:inexact_backtracking_proximal_point})} \\ \hline
        \textbf{Stationarity measure} & $\dist(0,\partial f(x))\le\varepsilon$ & $\|\nabla f_{\bar\gamma}(x)\|\le\varepsilon$ \\ \hline
        \textbf{Oracle model} & Exact proximal operator $\prox_{\gamma f}(x)$ on safe calls, otherwise the candidate oracle output can be arbitrary; exact function differences & Stochastic proximal oracle (SPO), reasonably accurate with constant probability on safe calls, {otherwise allowed to return an arbitrary point in $\dom f$}; stochastic difference oracle (SDO) with biased, heavy-tailed errors \\ \hline
        \textbf{Iteration complexity} & $\mathcal O(\varepsilon^{-2})$, deterministic & $\mathcal O(\varepsilon^{-2})$, high probability \\ \hline
    \end{tabular}
\end{table}

\subsection{Related work}\label{sec:related_work}

Our work contributes to the literature on proximal-based methods for weakly convex optimization. Weak convexity is a standard structural assumption in nonsmooth nonconvex optimization, with analyses often built on the Moreau envelope and proximal-point  methodology \cite{Moreau1965,Rockafellar1976}. Modern analyses commonly measure progress through the Moreau-envelope gradient at a safe smoothing parameter rather than through the subgradient distance $\dist(0,\partial f(x))$ \cite{Drusvyatskiy2018survey,DDStochModelBased,DavisGrimmerProximallyGuided}.

One important weakly convex class is the composite class $f(x)=h(c(x))$ with $h$ convex and Lipschitz and $c$ smooth. In \cite{DrusvyatskiyPaquette2018}, the authors analyzed the deterministic prox-linear method for the sum of a convex function and such a composition, and established an $O(\varepsilon^{-2})$ iteration complexity. For objectives consisting of a smooth finite-sum part plus a convex, possibly nonsmooth regularizer, the 4WD-Catalyst framework of \cite{PaquetteCatalyst} wraps gradient-based convex-optimization solvers in proximal subproblems and automatically adapts the proximal regularization upward to an unknown weak-convexity constant.
The prox-linear and proximal-point approaches were subsequently extended to the stochastic setting. In \cite{DuchiRuan2018}, the authors developed a family of stochastic model-based methods, including a stochastic prox-linear method, and established asymptotic almost-sure convergence. In \cite{DavisGrimmerProximallyGuided}, the authors proposed the proximally guided stochastic subgradient method, which applies gradient descent to the Moreau envelope with each proximal point evaluated inexactly by an inner stochastic subgradient solver; the method attains the standard $O(\varepsilon^{-4})$ sample complexity. Davis and Drusvyatskiy \cite{DDStochModelBased} developed a stochastic model-based framework, proving $O(\varepsilon^{-4})$ sample complexity in expectation for Moreau-envelope stationarity under a known weak-convexity parameter.

Subsequent work broadened the weakly convex optimization literature in several directions. In \cite{MaiJohansson2020}, the authors obtained $O(\varepsilon^{-4})$ expected sample-complexity guarantees for a stochastic heavy-ball method under a bounded stochastic-subgradient assumption. In \cite{AlacaogluMalitskyCevher2021}, the authors analyzed AMSGrad for constrained stochastic weakly convex problems and obtained an expected $O(\varepsilon^{-4})$ sample-complexity up to logarithmic factors. The work \cite{BoehmWrightVariableSmoothing} studies a deterministic variable smoothing scheme for the sum of a smooth function and the composition of a weakly convex function with a linear map. The weakly convex component is replaced by its Moreau envelope with a $\rho$-dependent schedule, and an $O(\varepsilon^{-3})$ iteration complexity is obtained. The work \cite{GaoDeng2024} extends stochastic model-based methods beyond global Lipschitz continuity, and \cite{DengGao2025} gives a smooth-approximation framework covering Nesterov and Moreau smoothing, with deterministic and stochastic guarantees based on target-driven smoothing levels. The  work \cite{HongLin2025} proves high-probability rates for AdaGrad-Norm and an AGD template on constrained Lipschitz weakly convex
problems under bounded or sub-Gaussian noise. The recent work \cite{LiaoZheng2025} proposes a deterministic proximal descent method with bundle updates that accelerates under smoothness or quadratic growth.

Most existing methods for weakly convex optimization require the weak-convexity parameter $\rho$ to be known to the algorithm, and removing this requirement has attracted growing interest. Several works discussed above give the first attempts on this problem. The backtracking variant of the algorithm in \cite{DrusvyatskiyPaquette2018} removes this requirement using line search, but requires deterministic evaluations and is tied to the composite structure. The stochastic prox-linear method of \cite{DuchiRuan2018} runs on a predetermined schedule, and offers an asymptotic guarantee. The parameter-free variant of \cite{DavisGrimmerProximallyGuided} removes knowledge of $\rho$ through a prescribed monotone schedule, though with a weaker worst-case 
guarantee. The work \cite{PaquetteCatalyst} adapts the proximal regularization upward to an unknown weak-convexity constant, doubling it whenever a convexity test fails. The AdaGrad-type methods \cite{AlacaogluMalitskyCevher2021,HongLin2025} build their decreasing stepsizes online from observed gradients, and the heavy-ball method of \cite{MaiJohansson2020} uses a prescribed schedule free of problem constants. These methods usually require additional structure beyond weak convexity: for example, Lipschitz-type assumptions \cite{DavisGrimmerProximallyGuided,AlacaogluMalitskyCevher2021,HongLin2025,MaiJohansson2020}, or smoothness \cite{PaquetteCatalyst}. In this paper, we adapt the proximal parameter itself online and bidirectionally, increasing and decreasing it through a verifiable test without committing to a schedule, and obtain high-probability finite-time guarantees for general weakly convex functions that need not be smooth or Lipschitz, under heavy-tailed value noise and a merely constant-probability proximal oracle.  As a result, we extend the scope of previous works by allowing significantly more relaxed assumptions on both the problem structure and the inputs of the algorithm.

Methodologically, our framework builds on a line of work on adaptive optimization with probabilistic and noisy oracles, in which trust-region, line-search, and cubic regularized Newton steps are accepted or rejected based on inexact information, with complexity guarantees in expectation or with high probability \cite{CartisScheinberg2018,ChenMenickellyScheinberg2018,GrattonRoyerVicenteZhang2018,BlanchetCartisMenickellyScheinberg2019,PaquetteScheinberg2020,CaoBerahasScheinberg2024}. Within this line, our prior work analyzes adaptive line-search and step-search methods \cite{JinScheinbergXieLineSearch2021,JinScheinbergXieStepSearch2024}, establishes a general sample-complexity framework for adaptive algorithms with stochastic oracles \cite{JinScheinbergXie2025}, develops first- and second-order adaptive cubic-regularization methods \cite{ScheinbergXieCubics2026}, and unifies these under a single high-probability analysis that tolerates unreliable oracle inputs \cite{ScheinbergXieUnified2025}; related developments address nonlinear-equality-constrained stochastic optimization \cite{BerahasXieZhou2025} and stochastic quasi-Newton methods \cite{MenickellyWildXie2026}. All of the previous work in this line requires the function to be smooth. This paper brings this idea to nonsmooth weakly convex problems, where we face a difficulty with no smooth analogue: the proximal parameter controls whether the subproblem is even tractable, and the safe/unsafe boundary, determined by the unknown $\rho$, is invisible to the algorithm.

\paragraph{Paper Organization.}
The rest of the paper is organized as follows. In Section~\ref{sec:prelims}, we introduce the problem setting, review basic properties of the Moreau envelope, and introduce the deterministic and stochastic settings in the paper. In Section~\ref{sec:unified}, we present the APS scheme, which is shared by the deterministic and stochastic settings. Sections~\ref{sec:det_specialization} and~\ref{sec:inexact_backtracking_proximal_point} then analyze the proximal-point version of APS, establishing $O(\varepsilon^{-2})$ iteration complexity in the deterministic setting and the same iteration complexity with high probability in the stochastic setting. Section~\ref{sec:model_oracles} then extends the same APS mechanism to general model-based candidates by abstracting the candidate step as the solution of a regularized local model subproblem. Adaptive proximal-point, prox-linear, and proximal-gradient methods arise as special cases of this model-based APS framework with different choices of the model. In Section~\ref{sec:sample_complex}, we apply the APS framework to expected-risk objectives and provide concrete constructions of the difference and candidate oracles. Section~\ref{sec:numerics} then illustrates the performance of APS numerically in the deterministic and stochastic settings.

	\section{Problem Setting}\label{sec:prelims}

	We consider the  minimization problem
	\[ \min_{x \in \mathbb{R}^d} f(x), \]
	where $f: \mathbb{R}^d \to \mathbb{R}{\cup\{+\infty\}}$ is proper and lower semicontinuous, {which allows constrained objectives ($f(x)=+\infty$ if $x$ is not in the feasible set)}. We make the following standing assumptions.

	\begin{assumption}[Weak Convexity]\label{ass:weak_convexity}
		The function $f$ is $\rho$-weakly convex for some $\rho > 0$. That is, the function $x \mapsto f(x) + \frac{\rho}{2}\|x\|^2$ is convex.
	\end{assumption}
	
	\begin{assumption}[Boundedness]\label{ass:boundedness}
		The function $f$ is bounded below by a finite value $f_{\inf} > -\infty$.
	\end{assumption}
	As stated in \autoref{sec:intro}, the weakly convex function class is broad. It includes convex functions, whose minimal weak-convexity constant is $0$ but which are $\rho$-weakly convex for every $\rho>0$, $L$-smooth functions, which are $L$-weakly convex, composite constructions $f(x)=h(c(x))$ with $h$ $L_h$-Lipschitz convex and $c$ smooth with $L_{\nabla c}$-Lipschitz Jacobian, which are $L_hL_{\nabla c}$-weakly convex \cite{DrusvyatskiyPaquette2018,DDStochModelBased}, and constrained objectives $g+\iota_C$ with $g$ weakly convex and $C$ closed convex. In this work, we do not assume that any valid weak-convexity constant is known to the algorithm, nor do we require $f$ to be Lipschitz or smooth. This makes the method applicable to a wide range of problems where the weak-convexity  parameter is unknown or hard to estimate, and where function values grow faster than linearly.

	\paragraph{Subdifferential.}
	Since $f$ is $\rho$-weakly convex, the function $h(x) := f(x) + \frac{\rho}{2}\|x\|^2$ is convex. We define the \emph{subdifferential} of $f$ at $x$ by
	\[ \partial f(x) := \{ v \in \mathbb{R}^d : v + \rho x \in \partial h(x) \}, \]
	where $\partial h(x)$ denotes the convex subdifferential of $h$. This definition coincides with the Fr\'echet subdifferential of $f$ (see, e.g., \cite{RW98,Drusvyatskiy2018survey}). Equivalently, {for $x\in\dom f$,} $v \in \partial f(x)$ if and only if
	\[ f(y) \ge f(x) + \langle v, y-x \rangle - \frac{\rho}{2}\|y-x\|^2, \quad \forall y \in \mathbb{R}^d. \]
	We refer to elements of $\partial f(x)$ as subgradients.

	Under these assumptions the proximal objective $f(\cdot)+\tfrac{1}{2\gamma}\|\cdot-x\|^2$ is proper, lower semicontinuous, and coercive, so it attains its minimum for every $\gamma>0$ by Weierstrass' theorem. We therefore write
	\[
	\begin{gathered}
	\prox_{\gamma f}(x) \in \argmin_{y} \left\{ f(y) + \frac{1}{2\gamma}\|y-x\|^2 \right\}, \\
	f_\gamma(x) := \min_{y} \left\{ f(y) + \frac{1}{2\gamma}\|y-x\|^2 \right\}.
	\end{gathered}
	\]
	When $\gamma < 1/\rho$, the minimizer is unique, the envelope $f_\gamma$ is $C^1$ smooth, and $\nabla f_\gamma(x) = \gamma^{-1}(x - \prox_{\gamma f}(x))$ {\cite[Lemma 2.2]{DDStochModelBased}}. For larger $\gamma$ the proximal subproblem is no longer strongly convex; the minimum is still attained but may be non-unique, and the Moreau envelope need not be differentiable.

	An important component of the framework is the \emph{candidate oracle}. We will mainly focus on the proximal oracle as the candidate oracle in \autoref{sec:prelims}-\ref{sec:inexact_backtracking_proximal_point}, so the safe-regime properties and Moreau-envelope tools below are stated for the proximal map. \autoref{sec:model_oracles} later generalizes the candidate oracle to model-based oracles that capture the prox-linear and proximal-gradient methods.

A central quantity throughout the analysis is the \emph{safe-regime threshold}
	\[
	\bar\gamma := \frac{1}{2\rho}.
	\]
It is the upper end of what we call the \emph{safe regime} $\gamma\le\bar\gamma$. We emphasize that $\bar\gamma$ is used \emph{only in the analysis}: the algorithm does not know $\rho$ or $\bar\gamma$. The choice $\bar\gamma = 1/(2\rho)$ leaves a constant-factor margin from the degeneracy boundary $\gamma\rho = 1$ at which the proximal subproblem loses strong convexity; whenever $\gamma\le\bar\gamma$, the proximal subproblem is $(1/\gamma-\rho)$-strongly convex with modulus at least $\rho$, the proximal point $\prox_{\gamma f}(x)$ is unique, and $f_\gamma$ is differentiable. We call such $\gamma$ a \emph{safe} proximal parameter, and any $\gamma>\bar\gamma$ an \emph{unsafe} proximal parameter.

	We consider two distinct settings for finding an approximate stationary point, depending on the available information:
 
	\paragraph{1. Deterministic Setting.}
	In this setting, we assume access to exact function values, and an oracle that can evaluate $\prox_{\gamma f}(x)$ whenever $\gamma\le\bar\gamma$. For $\gamma > \bar\gamma$, we do not impose any requirements on the proximal point oracle, and its output can be arbitrary in \(\dom f\).
	In the deterministic setting, the goal is to find a point $x$ satisfying:
	\[ \dist(0, \partial f(x)) \le \varepsilon. \]
 
	\paragraph{2. Stochastic Setting.}
	In this setting, we assume access to a noisy function value difference oracle with possibly biased and heavy-tailed errors. Inside the safe regime $\gamma\le\bar\gamma$, the proximal oracle is only required to be sufficiently accurate with constant probability, but otherwise can be an arbitrarily bad estimate of $\prox_{\gamma f}(x)$. For $\gamma > \bar\gamma$, the output of the proximal oracle can again be arbitrary in \(\dom f\). In the stochastic setting, the goal is to find a point $x$ satisfying
	\[ \|\nabla f_{\bar\gamma}(x)\| \le \varepsilon. \]
	This is a standard measure for stationarity in weakly convex optimization: $\|\nabla f_{\bar\gamma}(x)\|\le\varepsilon$ implies that $x$ is close to a point $y$ (specifically $y=\prox_{\bar\gamma f}(x)$) that is nearly stationary ($\dist(0,\partial f(y))\le\varepsilon$). This measure is robust to local nonsmoothness. When $f$ is nonsmooth, the subgradient set $\partial f(x)$ can be unstable (e.g., changing discontinuously). In stochastic regimes, we cannot reliably find a point with small subgradient norm directly; instead, we target the gradient of the Moreau envelope at the fixed safe parameter $\bar\gamma$.

	\subsection{Basic Properties in the Safe Regime}
	Before stating the algorithmic scheme, we introduce four properties of the proximal map inside the safe regime that are used by both the deterministic and stochastic analyses. 

	\begin{lemma}\label{lem:safe_subgrad_cert}
		Let $\gamma>0$, $x\in\mathbb R^d$, and $y=\prox_{\gamma f}(x)$, and set $d:=(x-y)/\gamma$. Then $d\in\partial f(y)$, and consequently $\dist(0,\partial f(y))\le\|d\|$.
	\end{lemma}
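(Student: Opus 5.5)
Since $y=\prox_{\gamma f}(x)$ minimizes $\phi(z):=f(z)+\frac{1}{2\gamma}\|z-x\|^2$ over $\mathbb R^d$, the plan is to turn this global minimality into the subgradient inequality for $f$ at $y$ stated in the preliminaries, namely that $d\in\partial f(y)$ iff $f(z)\ge f(y)+\langle d,z-y\rangle-\frac{\rho}{2}\|z-y\|^2$ for all $z$. Note first that $y\in\dom f$: $f$ is proper, so $\phi$ is finite somewhere and its minimum value is finite. The lemma allows any $\gamma>0$ (not just safe ones), so the argument should use only minimality and weak convexity, not uniqueness of the prox.

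\emph{Step 1 (variational inequality from minimality).} For any $z\in\dom f$ and $t\in(0,1]$, set $z_t=y+t(z-y)$. By $\rho$-weak convexity, $h=f+\frac{\rho}{2}\|\cdot\|^2$ is convex, which gives $f(z_t)\le (1-t)f(y)+tf(z)+\frac{\rho}{2}t(1-t)\|z-y\|^2$. Combining this with $\phi(y)\le\phi(z_t)$ and expanding
\[
\tfrac{1}{2\gamma}\|z_t-x\|^2-\tfrac{1}{2\gamma}\|y-x\|^2=\tfrac{t}{\gamma}\langle y-x,z-y\rangle+\tfrac{t^2}{2\gamma}\|z-y\|^2,
\]
I would obtain $0\le t\bigl(f(z)-f(y)\bigr)+\frac{\rho}{2}t(1-t)\|z-y\|^2-t\langle d,z-y\rangle+\frac{t^2}{2\gamma}\|z-y\|^2$, using $y-x=-\gamma d$.

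\emph{Step 2 (limit).} Divide by $t>0$ and let $t\downarrow0$. This yields $f(z)\ge f(y)+\langle d,z-y\rangle-\frac{\rho}{2}\|z-y\|^2$ for all $z\in\dom f$. For $z\notin\dom f$ the inequality is trivial. Hence $d\in\partial f(y)$ by the equivalent characterization recalled above, and $\dist(0,\partial f(y))\le\|d\|$ follows immediately.

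The only delicate point is Step 1. One cannot directly invoke a sum rule $0\in\partial f(y)+\gamma^{-1}(y-x)$ without justifying that rule for the nonconvex $f$. The convex-combination argument avoids this, and it is valid for every $\gamma>0$, including unsafe ones where $\phi$ may be nonconvex. An alternative route is to note that $h$ is convex, apply the convex sum rule to $h+\bigl(\frac{1}{2\gamma}\|\cdot-x\|^2-\frac{\rho}{2}\|\cdot\|^2\bigr)$, and use first-order optimality for this smooth-plus-convex sum at its minimizer $y$. However, the direct argument is cleaner, so I would use it.
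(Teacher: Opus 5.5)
Your proof is correct, and it takes a more elementary route than the paper. The paper's proof is one line. It applies the Fermat rule at the minimizer $y$ of $\phi$ to get $0\in\partial f(y)+\gamma^{-1}(y-x)$. This implicitly relies on two standard variational-analysis facts: that the paper's $\partial f$ coincides with the Fr\'echet subdifferential, and that adding a $C^1$ function to $f$ simply adds its gradient to the Fr\'echet subdifferential (see \cite{RW98}). Both facts hold for every $\gamma>0$ without any convexity of $\phi$.

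You instead derive the inequality $f(z)\ge f(y)+\langle d,z-y\rangle-\tfrac{\rho}{2}\|z-y\|^2$ directly. You use global minimality along the segment $z_t=y+t(z-y)$ together with the convexity of $h$, and then conclude via exactly the characterization of $\partial f$ recalled in the preliminaries. In effect, you reprove the special case of the Fermat and smooth-sum rules that is needed.

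Your version is self-contained: it uses only the paper's own definition of $\partial f$ and makes transparent why no safe-regime assumption on $\gamma$ is needed. The paper's version buys brevity at the cost of citing outside machinery. Your worry that the sum rule cannot be invoked for nonconvex $f$ without justification is more cautious than necessary, since the smooth-perturbation sum rule for Fr\'echet subdifferentials is exact. Either way, your argument sidesteps the issue cleanly.
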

	\begin{proof}
		The Fermat rule applied to the proximal subproblem at its minimizer $y$ gives $0\in\partial f(y)+\gamma^{-1}(y-x)$, so $d=\gamma^{-1}(x-y)\in\partial f(y)$. The distance bound is immediate.
	\end{proof}

	\begin{lemma}[Sufficient Descent]\label{lem:accept}
		Suppose Assumption~\ref{ass:weak_convexity} holds, and let $\sigma\in(0,1)$. For any $\gamma\le\bar\gamma$, $x\in \dom f$, and $y=\prox_{\gamma f}(x)$ with $d=(x-y)/\gamma$,
		\[
		f(y) \le f(x) - \frac{2-\rho\gamma}{2}\gamma\|d\|^2 \le f(x)-\frac{\sigma}{2}\gamma\|d\|^2.
		\]
	\end{lemma}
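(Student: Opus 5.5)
The plan is to combine Lemma~\ref{lem:safe_subgrad_cert} with the weak-convexity subgradient inequality, applied at the proximal point $y$ and evaluated back at the base point $x$. By Lemma~\ref{lem:safe_subgrad_cert}, $d=(x-y)/\gamma\in\partial f(y)$. Since $x\in\dom f$ and $y$ minimizes the proximal objective, $f(y)\le f(x)<\infty$, so $y\in\dom f$. The characterization of $\partial f$ recalled in Section~\ref{sec:prelims} then applies at $y$ and gives, for every $z$,
\[
f(z)\ge f(y)+\langle d,z-y\rangle-\tfrac{\rho}{2}\|z-y\|^2 .
\]

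Next I take $z=x$ and use $x-y=\gamma d$. This gives $\langle d,x-y\rangle=\gamma\|d\|^2$ and $\|x-y\|^2=\gamma^2\|d\|^2$, hence
\[
f(x)\ge f(y)+\gamma\|d\|^2-\tfrac{\rho}{2}\gamma^2\|d\|^2=f(y)+\tfrac{2-\rho\gamma}{2}\gamma\|d\|^2 .
\]
Rearranging yields the first inequality. This step holds for any $\gamma>0$ for which $y$ is a proximal point, so safety is only needed for the second inequality.

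For the second inequality, I use $\gamma\le\bar\gamma=1/(2\rho)$, which gives $\rho\gamma\le 1/2$. Then $(2-\rho\gamma)/2\ge 3/4>1/2>\sigma/2$ because $\sigma<1$. Multiplying by $\gamma\|d\|^2\ge0$ gives $f(x)-\frac{2-\rho\gamma}{2}\gamma\|d\|^2\le f(x)-\frac{\sigma}{2}\gamma\|d\|^2$.

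The only delicate point is ensuring that the subgradient inequality is legitimately available at $y$. This needs $y\in\dom f$, which the minimality argument above supplies, and the identification of $\partial f$ with the convex subdifferential of $f+\frac{\rho}{2}\|\cdot\|^2$, which is used implicitly in the Fermat rule inside Lemma~\ref{lem:safe_subgrad_cert}. Everything else is direct algebra.
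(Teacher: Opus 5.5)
Your proof is correct and is essentially the paper's argument seen from the other side. The paper applies the quadratic-growth inequality $g(x)\ge g(y)+\frac{1}{2}(1/\gamma-\rho)\|x-y\|^2$ for the strongly convex prox objective $g=f+\frac{1}{2\gamma}\|\cdot-x\|^2$ at its minimizer $y$; this is exactly your weak-convexity subgradient inequality at $y$ with $d\in\partial f(y)$ (since $0\in\partial g(y)$ iff $d\in\partial f(y)$), and both give $f(x)-f(y)\ge(1/\gamma-\rho/2)\|x-y\|^2$. Your framing has the small advantage of making explicit that the first inequality needs neither safety nor uniqueness (any global minimizer and any $\gamma>0$ will do), with $\gamma\le\bar\gamma$ used only to get $(2-\rho\gamma)/2\ge 3/4>\sigma/2$.
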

	\begin{proof}
		Let $g(u) = f(u) + \frac{1}{2\gamma}\|u-x\|^2$. Since $f$ is $\rho$-weakly convex and $\gamma\le\bar\gamma$, $g$ is strongly convex with parameter $\mu = 1/\gamma-\rho\ge\rho>0$, and $y$ is the unique minimizer of $g$. Strong convexity gives
		\[ g(x) \ge g(y) + \frac{\mu}{2}\|x-y\|^2. \]
		Substituting the definition of $g$:
		\[ f(x) \ge f(y) + \frac{1}{2\gamma}\|y-x\|^2 + \frac{1/\gamma-\rho}{2}\|x-y\|^2. \]
		Rearranging terms and using $\|x-y\| = \gamma\|d\|$:
		\[ f(x) - f(y) \ge \left(\frac{1}{2\gamma} + \frac{1}{2\gamma} - \frac{\rho}{2}\right)\gamma^2\|d\|^2 = \left(\frac{1}{\gamma} - \frac{\rho}{2}\right)\gamma^2\|d\|^2 = \frac{2-\rho\gamma}{2}\gamma\|d\|^2. \]
		The condition $f(y) \le f(x) - \frac{\sigma}{2}\gamma\|d\|^2$ holds if $\frac{2-\rho\gamma}{2} \ge \frac{\sigma}{2}$. Since $\gamma \leq 1/(2\rho)$, we have $\rho\gamma \leq 1/2$, so $2-\rho\gamma > 1 > \sigma$. Thus the inequality is always satisfied.
	\end{proof}

	\begin{lemma}\label{lem:consistency}
		Suppose Assumption~\ref{ass:weak_convexity} holds. Let $0<\gamma\le \bar\gamma$ and let
		\[
		\Phi(y):=f(y)+\frac{1}{2\gamma}\|y-x\|^2,
		\qquad
		\Phi^* := \min_{y}\Phi(y).
		\]
		If a candidate $y$ satisfies $\Phi(y)-\Phi^*\le\nu$, then
		\[
		\left\|\frac{x-y}{\gamma}-\nabla f_\gamma(x)\right\|
		\le
		\sqrt{\frac{2\nu}{\gamma(1-\rho\gamma)}}\le 2\sqrt{\frac{\nu}{\gamma}}.
		\]
		In particular, $\nu\le \varepsilon^2\gamma/16$ caps the perturbation by $\varepsilon/2$, so whenever $\|\nabla f_\gamma(x)\|>\varepsilon$, we have
		\[
		\left\|\frac{x-y}{\gamma}\right\|>\frac{\varepsilon}{2}.
		\]
	\end{lemma}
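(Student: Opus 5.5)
The argument rests on strong convexity of $\Phi$ in the safe regime. Since $f$ is $\rho$-weakly convex and $\gamma\le\bar\gamma$, the function $\Phi$ is $\mu$-strongly convex with $\mu=1/\gamma-\rho=(1-\rho\gamma)/\gamma>0$. By the facts recalled in Section~\ref{sec:prelims}, its unique minimizer is $y^*=\prox_{\gamma f}(x)$, and $\nabla f_\gamma(x)=(x-y^*)/\gamma$.

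First we compare the two displacements. They differ by
\[
\frac{x-y}{\gamma}-\nabla f_\gamma(x)=\frac{y^*-y}{\gamma},
\]
so it suffices to bound $\|y-y^*\|$.

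Second, we apply the quadratic growth of a strongly convex function around its minimizer. Since $0\in\partial\Phi(y^*)$, strong convexity gives $\Phi(y)\ge\Phi^*+\frac{\mu}{2}\|y-y^*\|^2$. Combined with the hypothesis $\Phi(y)-\Phi^*\le\nu$, this yields
\[
\|y-y^*\|^2\le \frac{2\nu}{\mu}=\frac{2\nu\gamma}{1-\rho\gamma}.
\]
Dividing by $\gamma^2$ and taking square roots gives the first inequality,
\[
\Bigl\|\frac{x-y}{\gamma}-\nabla f_\gamma(x)\Bigr\|\le\sqrt{\frac{2\nu}{\gamma(1-\rho\gamma)}}.
\]
For the second inequality, $\rho\gamma\le 1/2$ gives $1-\rho\gamma\ge 1/2$, so the right-hand side is at most $\sqrt{4\nu/\gamma}=2\sqrt{\nu/\gamma}$.

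Third, we derive the consequence. If $\nu\le\varepsilon^2\gamma/16$, then $2\sqrt{\nu/\gamma}\le 2\cdot\varepsilon/4=\varepsilon/2$. The reverse triangle inequality then gives
\[
\Bigl\|\frac{x-y}{\gamma}\Bigr\|\ge\|\nabla f_\gamma(x)\|-\frac{\varepsilon}{2}>\varepsilon-\frac{\varepsilon}{2}=\frac{\varepsilon}{2}
\]
whenever $\|\nabla f_\gamma(x)\|>\varepsilon$.

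The only point needing care is the quadratic growth inequality for a nonsmooth, extended-valued $\Phi$. We justify it as follows. The function $\Phi-\frac{\mu}{2}\|\cdot-y^*\|^2$ is convex and has $0$ in its subdifferential at $y^*$, because subtracting the quadratic does not change the subdifferential at its center. Hence $y^*$ minimizes it, which is exactly the growth bound. If $y\notin\dom f$, then $\Phi(y)=+\infty$ and the hypothesis $\Phi(y)-\Phi^*\le\nu$ fails, so no separate case is needed.
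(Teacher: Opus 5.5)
Your proof is correct and follows the paper's argument essentially step for step. Both use strong convexity of $\Phi$ with modulus $1/\gamma-\rho$, quadratic growth around $\prox_{\gamma f}(x)$, the identity $(x-y)/\gamma-\nabla f_\gamma(x)=(\prox_{\gamma f}(x)-y)/\gamma$, the bound $1-\rho\gamma\ge 1/2$, and the reverse triangle inequality; your added justification of quadratic growth for the extended-valued $\Phi$ is a sound detail the paper leaves implicit.
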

	\begin{proof}
		Set $\bar y=\prox_{\gamma f}(x)$. Since $f$ is $\rho$-weakly convex, $\Phi$ is $(1/\gamma-\rho)$-strongly convex, and $\bar y$ is its unique minimizer, so $\Phi^*=\Phi(\bar y)$. Strong convexity around $\bar y$ then gives
		\[
		\|y-\bar y\|^2
		\le
		\frac{2(\Phi(y)-\Phi^*)}{1/\gamma-\rho}\le\frac{2\nu}{1/\gamma-\rho}
		=
		\frac{2\nu\gamma}{1-\rho\gamma}.
		\]
		Since
		\[
		\frac{x-y}{\gamma}-\nabla f_\gamma(x)
		=
		\frac{\bar y-y}{\gamma},
		\]
		we have
		\[
		\left\|\frac{x-y}{\gamma}-\nabla f_\gamma(x)\right\|^2
		\le
		\frac{2\nu}{\gamma(1-\rho\gamma)}.
		\]
		Taking square roots gives the sharper perturbation bound, and the last inequality in the statement follows from \(\gamma\le\bar\gamma\). The second claim follows from the triangle inequality.
	\end{proof}

	\begin{lemma}\label{cor:moreau_factor2}
		Suppose Assumption~\ref{ass:weak_convexity} holds. If $0<\gamma\le\lambda\le \bar\gamma$, then for every $x\in\mathbb R^d$,
		\begin{equation}\label{eq:moreau_comp}
			\|\nabla f_\lambda(x)\|
			\le
			\frac{1-\gamma\rho}{1-\lambda\rho}\|\nabla f_\gamma(x)\|
			\le 2\|\nabla f_\gamma(x)\|.
		\end{equation}
	\end{lemma}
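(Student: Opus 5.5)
We compare the two proximal points $y_\gamma:=\prox_{\gamma f}(x)$ and $y_\lambda:=\prox_{\lambda f}(x)$ directly. The only tools are the optimality conditions (Lemma~\ref{lem:safe_subgrad_cert}) and the monotonicity of the weakly convex subdifferential. We may assume $x\in\dom f$ or simply work with $x\in\mathbb R^d$, since the proximal points lie in $\dom f$ regardless. Write $g_\gamma:=\nabla f_\gamma(x)=(x-y_\gamma)/\gamma$ and $g_\lambda:=(x-y_\lambda)/\lambda$. By Lemma~\ref{lem:safe_subgrad_cert}, $g_\gamma\in\partial f(y_\gamma)$ and $g_\lambda\in\partial f(y_\lambda)$.

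\textbf{Step 1 (hypomonotonicity).} Adding the two subgradient inequalities from the definition of $\partial f$, taken at $y_\gamma$ and $y_\lambda$, gives
\[
\langle g_\lambda-g_\gamma,\,y_\lambda-y_\gamma\rangle\ge-\rho\|y_\lambda-y_\gamma\|^2 .
\]

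\textbf{Step 2 (substitute the proximal points).} Substitute $y_\lambda=x-\lambda g_\lambda$ and $y_\gamma=x-\gamma g_\gamma$, so that $y_\lambda-y_\gamma=\gamma g_\gamma-\lambda g_\lambda$. This yields a quadratic inequality in the vectors $g_\lambda,g_\gamma$:
\[
\langle g_\lambda-g_\gamma,\ \gamma g_\gamma-\lambda g_\lambda\rangle+\rho\|\gamma g_\gamma-\lambda g_\lambda\|^2\ge 0 .
\]

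\textbf{Step 3 (extract the norm comparison).} Expanding, the coefficient of $\|g_\lambda\|^2$ is $-\lambda(1-\rho\lambda)$, the coefficient of $\|g_\gamma\|^2$ is $-\gamma(1-\rho\gamma)$, and the cross term is $\bigl(\gamma+\lambda-2\rho\gamma\lambda\bigr)\langle g_\lambda,g_\gamma\rangle$. Rearranging,
\[
\lambda(1-\rho\lambda)\|g_\lambda\|^2+\gamma(1-\rho\gamma)\|g_\gamma\|^2\le(\gamma+\lambda-2\rho\gamma\lambda)\|g_\lambda\|\|g_\gamma\|.
\]
Set $a:=\|g_\lambda\|$, $b:=\|g_\gamma\|$, $t:=a/b$ (the case $b=0$ forces $a=0$ from the same inequality, since all coefficients are positive). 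The quadratic $\lambda(1-\rho\lambda)t^2-(\gamma+\lambda-2\rho\gamma\lambda)t+\gamma(1-\rho\gamma)\le0$ factors as
\[
\bigl(\lambda(1-\rho\lambda)\,t-\gamma(1-\rho\gamma)\bigr)\bigl(t-1\bigr)\cdot\frac{1}{?}
\]
and finding this factorization is the main obstacle. I expect it to be $\bigl(\lambda t-\gamma\bigr)\bigl((1-\rho\lambda)t-(1-\rho\gamma)\bigr)$. Checking: the product of the leading coefficients is $\lambda(1-\rho\lambda)$, and the product of the constants is $\gamma(1-\rho\gamma)$. The cross term is $-\lambda(1-\rho\gamma)-\gamma(1-\rho\lambda)=-(\gamma+\lambda-2\rho\gamma\lambda)$, which matches.

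The roots are therefore $\gamma/\lambda\le1$ and $(1-\rho\gamma)/(1-\rho\lambda)\ge1$. Hence $t\le(1-\rho\gamma)/(1-\rho\lambda)$, which is the first inequality.

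\textbf{Step 4 (the constant 2).} Since $\lambda\le\bar\gamma$, we have $1-\rho\lambda\ge 1/2$, while $1-\rho\gamma\le1$. The ratio is therefore at most $2$.
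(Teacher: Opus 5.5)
Your proof is correct. It differs from the paper's only in how the hypomonotonicity inequality is turned into a norm bound.

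\textbf{The paper's route.} It keeps $v=y_\lambda-y_\gamma$ as the working variable and rewrites the inequality as $\langle g_\gamma,v\rangle\le-\frac{1-\lambda\rho}{\lambda-\gamma}\|v\|^2$. Cauchy--Schwarz then gives $\|v\|\le\frac{\lambda-\gamma}{1-\lambda\rho}\|g_\gamma\|$, and the triangle inequality on $\lambda g_\lambda=\gamma g_\gamma-v$ finishes. This divides by $\lambda-\gamma$, so the cases $\gamma=\lambda$ and $v=0$ must be treated separately. In exchange, it yields an explicit bound on the distance between the two proximal points as a by-product.

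\textbf{Your route.} You expand the quadratic form in $(g_\lambda,g_\gamma)$ and apply Cauchy--Schwarz to the cross term. You then factor the resulting scalar quadratic in $t=\|g_\lambda\|/\|g_\gamma\|$ as $(\lambda t-\gamma)\bigl((1-\rho\lambda)t-(1-\rho\gamma)\bigr)\le 0$. This handles $\gamma=\lambda$ uniformly, needing only the trivial case $\|g_\gamma\|=0$. It also gives the two-sided bracket $\gamma/\lambda\le t\le(1-\gamma\rho)/(1-\lambda\rho)$. The lower half, $\|x-\prox_{\lambda f}(x)\|\ge\|x-\prox_{\gamma f}(x)\|$, is not stated in the paper.

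\textbf{Two clean-ups.}
\begin{itemize}
\item State, at the point where you replace $\langle g_\lambda,g_\gamma\rangle$ by $\|g_\lambda\|\|g_\gamma\|$, that the cross coefficient $\gamma+\lambda-2\rho\gamma\lambda=\gamma(1-\rho\lambda)+\lambda(1-\rho\gamma)$ is positive. That positivity is what justifies the direction of the inequality. Your later remark that ``all coefficients are positive'' comes after the step that needs it.
\item Delete the abandoned display with the factor $(t-1)$ and the placeholder denominator. That product is not a factorization of your quadratic: its linear coefficient is $-\bigl(\lambda(1-\rho\lambda)+\gamma(1-\rho\gamma)\bigr)$, which differs from $-(\gamma+\lambda-2\rho\gamma\lambda)$ in general. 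Only the second factorization, which you verified, should remain.
\end{itemize}
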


	\begin{proof}
		The case $\gamma=\lambda$ is immediate; assume $\gamma<\lambda$. Set $y_\lambda=\prox_{\lambda f}(x)$, $y_\gamma=\prox_{\gamma f}(x)$, $g_\lambda=\nabla f_\lambda(x)$, and $g_\gamma=\nabla f_\gamma(x)$. By optimality of the proximal points, $g_\lambda\in\partial f(y_\lambda)$ and $g_\gamma\in\partial f(y_\gamma)$, so hypomonotonicity of the subdifferential of a $\rho$-weakly convex function gives
		\[
		\langle g_\lambda-g_\gamma,\;y_\lambda-y_\gamma\rangle \ge -\rho\|y_\lambda-y_\gamma\|^2.
		\]
		Let $v:=y_\lambda-y_\gamma=\gamma g_\gamma-\lambda g_\lambda$. Substituting $g_\lambda=(\gamma g_\gamma-v)/\lambda$ into the inequality, multiplying by $\lambda>0$, and dividing by $\gamma-\lambda<0$ (which flips the inequality) yields
		\[
		\langle g_\gamma,v\rangle \le -\tfrac{1-\lambda\rho}{\lambda-\gamma}\|v\|^2.
		\]
		Together with $\langle g_\gamma,v\rangle\ge-\|g_\gamma\|\,\|v\|$, this gives $\|v\|\le\frac{\lambda-\gamma}{1-\lambda\rho}\|g_\gamma\|$ when $v\ne 0$. Therefore
		\[
		\lambda\|g_\lambda\| = \|\gamma g_\gamma-v\| \le \gamma\|g_\gamma\|+\|v\| \le \frac{\lambda(1-\gamma\rho)}{1-\lambda\rho}\|g_\gamma\|,
		\]
		so $\|g_\lambda\| \le \frac{1-\gamma\rho}{1-\lambda\rho}\|g_\gamma\| \le 2\|g_\gamma\|$, where the last inequality uses $1-\gamma\rho\le 1$ and $1-\lambda\rho\ge 1/2$. The case $v=0$ gives $\|g_\lambda\|=(\gamma/\lambda)\|g_\gamma\|$, which is also bounded by the factor because \(\gamma/\lambda\le(1-\gamma\rho)/(1-\lambda\rho)\) is equivalent to \(\gamma\le\lambda\).
	\end{proof}

	\section{The Adaptive Prox-Guided Scheme}\label{sec:unified}

	The two settings introduced in Section~\ref{sec:prelims} share a common algorithmic template, which we call the \emph{Adaptive Prox-Guided Scheme} (APS), presented in \autoref{alg:aps}. Each iteration of APS makes two oracle calls: one candidate-step query and one function-difference query. It then performs a single accept/reject check and updates the proximal parameter according to the outcome. The deterministic (\autoref{sec:det_specialization}) and stochastic (\autoref{sec:inexact_backtracking_proximal_point}) settings differ mainly in the oracles they use.
	
	APS adaptively updates the proximal parameter $\gamma>0$ during runtime. Throughout the paper, $\gamma_k$ denotes its value at iteration $k$. Below we introduce the key components of APS.
	
	\paragraph{Candidate and difference oracles.}
	Given the current iterate $x_k\in\dom f$ and parameter $\gamma_k>0$, the \emph{candidate oracle} returns a trial point $y_k \in \dom f$. In the proximal-point instance, $y_k$ approximates a minimizer of the proximal subproblem
	\[
		\min_y\left\{ f(y)+\frac{1}{2\gamma_k}\|y-x_k\|^2\right\}.
	\]
	In the general model-based instances (introduced in \autoref{sec:model_oracles}), $f$ is replaced by a local model $m_{x_k}$, and $y_k$ approximately solves the analogous subproblem with $m_{x_k}$ in place of $f$. The framework does not require an exact proximal step.
	The \emph{difference oracle} returns an estimate
	\[
		\Delta_k  \approx f(x_k)-f(y_k)
	\]
	of the decrease obtained by updating $x_k$ to $y_k$.
	
	In the deterministic setting, $\Delta_k\equiv f(x_k)-f(y_k)$ and the candidate oracle returns the exact proximal point whenever $\gamma \leq \bar{\gamma}$. If $\gamma_k > \bar{\gamma}$, its output can be arbitrary. In the stochastic setting, the difference oracle may be biased and have heavy-tailed errors. For the candidate oracle, in the safe regime $\gamma\le\bar\gamma$, it is only required to be sufficiently accurate with constant probability, but otherwise can be an arbitrarily bad estimate of $\prox_{\gamma f}(x)$. When $\gamma_k > \bar{\gamma}$, the output of the candidate oracle can again be an arbitrary point in \(\dom f\). The exact requirements for the stochastic oracles are stated in \autoref{sec:inexact_backtracking_proximal_point}.

	\paragraph{Moreau-gradient proxy.}
	We define the \emph{Moreau-gradient proxy} to be
	\[
		d_k := \frac{x_k-y_k}{\gamma_k}.
	\]
	If $\gamma_k\le\bar\gamma$ and $y_k=\prox_{\gamma_k f}(x_k)$, then $d_k$ is exactly the gradient of the Moreau envelope $f_{\gamma_k}$ at $x_k$, and Lemma~\ref{lem:safe_subgrad_cert} additionally gives $d_k\in\partial f(y_k)$. However, if $\gamma_k > \bar{\gamma}$, then $y_k$ can be arbitrary in \(\dom f\); and in the stochastic setting, $y_k$ is only guaranteed to be sufficiently close to $\prox_{\gamma_k f}(x_k)$ with constant probability, even when $\gamma_k \leq \bar{\gamma}$. For this reason we call $d_k$ a Moreau-gradient \emph{proxy} rather than a Moreau-envelope gradient. This Moreau-envelope view is specific to the proximal candidate; for the model-based candidates of \autoref{sec:model_oracles}, $d_k$ is not itself a Moreau-envelope proxy but we will see it still controls $\|\nabla f_{\gamma_k}(x_k)\|$.
	
	\paragraph{Accept/reject test.}
	A candidate point $y_k$ is accepted if it achieves sufficient estimated descent, up to oracle slack, and if $\norm{d_k}$ is not too small:
	\begin{equation}\label{eq:unified_test}
		\Delta_k \ge \frac{\sigma}{2}\gamma_k\|d_k\|^2 - \epsilon_f
		\qquad\text{and}\qquad
		\|d_k\| \ge \varepsilon_{\mathrm{rej}}.
		\tag{AT}
	\end{equation}
	Here $\sigma\in(0,1)$ is a fixed constant, $\varepsilon_{\mathrm{rej}}>0$ is a safeguard against accepting candidates with spuriously small residuals (without it, a bad SPO call could fool the test by returning $y_k = x_k$). We will later see that taking $\varepsilon_{\mathrm{rej}}=\mathcal O(\varepsilon)$ is a good choice. $\epsilon_f\ge 0$ is a slack term that accounts for error in the difference oracle. The deterministic setting sets $\epsilon_f=0$, while the stochastic setting may set $\epsilon_f$ to be an upper bound on the expected error of the stochastic difference oracle. If iteration $k$ satisfies \eqref{eq:unified_test}, we call it a \emph{successful} iteration; otherwise we call it an \emph{unsuccessful} iteration. We write 
	\[
		\Theta_k := \mathbf 1\{\text{iteration $k$ is successful}\}.
	\]
	
	\paragraph{Update rule.}
	Since $\rho$ is unknown, APS cannot fix a safe value of $\gamma$ a priori. Instead, it updates $\gamma$ bidirectionally according to the progress of the algorithm. A successful iteration increases $\gamma$ by a factor $\beta_{\mathrm{inc}}>1$, thereby allowing the scheme to take potentially larger steps in future iterates. An unsuccessful iteration decreases $\gamma$ by a factor $\beta_{\mathrm{dec}}\in(0,1)$, thereby shrinking the proximal parameter that may potentially be overly aggressive. Specifically,
	\begin{equation}\label{eq:unified_update}
		(x_{k+1},\gamma_{k+1}) =
		\begin{cases}
			(y_k,\beta_{\mathrm{inc}}\gamma_k), & \text{on a successful iteration},\\
			(x_k,\beta_{\mathrm{dec}}\gamma_k), & \text{on an unsuccessful iteration}.
		\end{cases}
	\end{equation}
	This update rule increases and decreases $\gamma_k$ adaptively based on the progress of the algorithm.  Intuitively, this allows $\gamma_k$ to adapt to the local geometry of the function. 
	
	\begin{algorithm}[htbp]
		\caption{Adaptive Prox-Guided Scheme (APS)}
		\label{alg:aps}
		\begin{algorithmic}[1]
			\State \textbf{Input:} initial iterate $x_0\in \dom f$; initial proximal parameter $\gamma_0>0$; descent parameter $\sigma\in(0,1)$; update factors $\beta_{\mathrm{inc}}>1$ and $\beta_{\mathrm{dec}}\in(0,1)$; rejection safeguard $\varepsilon_{\mathrm{rej}}>0$; oracle slack $\epsilon_f\ge 0$.
			\For{$k=0,1,2,\ldots$}
				\State Query the candidate oracle at $(x_k,\gamma_k)$ to obtain $y_k$.
				\State Set $d_k \gets (x_k-y_k)/\gamma_k$.
				\State Query the difference oracle at $(x_k,y_k)$ to obtain $\Delta_k\approx f(x_k)-f(y_k)$.
				\If{$\Delta_k\ge \frac{\sigma}{2}\gamma_k\|d_k\|^2-\epsilon_f$ \textbf{and} $\|d_k\|\ge \varepsilon_{\mathrm{rej}}$}
					\State $x_{k+1}\gets y_k$, $\gamma_{k+1}\gets \beta_{\mathrm{inc}}\gamma_k$ (\emph{successful} iteration).
				\Else
					\State $x_{k+1}\gets x_k$, $\gamma_{k+1}\gets \beta_{\mathrm{dec}}\gamma_k$ (\emph{unsuccessful} iteration).
				\EndIf
			\EndFor
		\end{algorithmic}
	\end{algorithm}

	We use the following notation in the remainder of the paper. We define the indicator
	\[
		U_k := \mathbf 1\{\max\{\gamma_k,\gamma_{k+1}\} > \bar\gamma\}.
	\]
	We call iteration $k$ an \emph{unsafe} iteration if $U_k = 1$ and a \emph{safe} iteration otherwise. A safe iteration means that both $\gamma_k$ and $\gamma_{k+1}$ lie in the safe regime. We define
	\[
		Z_k := f(x_k)-f_{\inf}
	\]
	as the measure of progress of the scheme.
	
	Set the log-scale update constant
	\[
		m := \frac{\ln\beta_{\mathrm{inc}}}{\ln(1/\beta_{\mathrm{dec}})}.
	\]
	The constant $m$ measures the size of one increase step in units of decrease steps. For clarity, we will assume for the rest of the paper that $m$ is a positive integer. The general $m \geq 1$ case can be handled by replacing $m$ with the corresponding floor or ceiling, but it makes the presentation messier without adding much insight.
	In the deterministic setting, there is no benefit to taking larger increases, and we use the reciprocal update choice $\beta_{\mathrm{inc}}=1/\beta_{\mathrm{dec}}$, so $m=1$. Larger values of $m$ are mainly useful in the stochastic setting, where reliable iterations may occur with small probability. A larger increase allows one successful reliable iteration to compensate for several decreases, preventing
	the proximal parameter from drifting toward zero, and in fact allows for an upward drift while in the safe regime.

	\begin{remark}[Practical parameter choices]\label{rem:practical_defaults}
		\sloppy
		All algorithmic parameters $(\sigma, \beta_{\mathrm{inc}}, \beta_{\mathrm{dec}}, \varepsilon_{\mathrm{rej}}, \gamma_0)$ are user-chosen and the analysis below holds for any valid choice. The bidirectional adaptation of $\gamma_k$ makes APS substantially more forgiving than any fixed-$\gamma$ scheme: $\gamma_0$ can be any positive scalar, and the analysis only pays a $\log$-factor for the gap between $\gamma_0$ and the safe scale $\bar\gamma$. Reasonable defaults are $\sigma=1/2$ (any $\sigma\in(0,1)$ works) and $\beta_{\mathrm{inc}}=1/\beta_{\mathrm{dec}}=2$. If the reliable-iteration probability is expected to be small, one may instead choose $m>1$, equivalently $\beta_{\mathrm{inc}}=(1/\beta_{\mathrm{dec}})^m$, so that each successful reliable iteration compensates for more unsuccessful decreases. 
	 It is natural to choose the safeguard threshold $\varepsilon_{\mathrm{rej}}$ proportional to the desired stationarity accuracy. For the deterministic proximal setting, we may take $\varepsilon_{\mathrm{rej}}=\varepsilon$, whereas in the 
	 stochastic proximal setting, we may take $\varepsilon_{\mathrm{rej}}=\frac{\varepsilon}{4}$, where $\varepsilon$ is the target accuracy for the optimization problem. The $\varepsilon_{\mathrm{rej}}$ used in the model-based extension is detailed in \autoref{sec:model_oracles}.
	\end{remark}

	Finally, define the logarithmic distances in units of decrease steps from the current parameter $\gamma_k$ to the safe-regime threshold $\bar\gamma$:
	\begin{equation}\label{eq:d0_def}
		\ell_k^+ := \max\left\{\left\lceil \frac{\ln(\gamma_k/\bar\gamma)}{\ln(1/\beta_{\mathrm{dec}})}\right\rceil,0\right\}, \quad
		\ell_k^- := \max\left\{\left\lfloor \frac{\ln(\bar\gamma/\gamma_k)}{\ln(1/\beta_{\mathrm{dec}})}\right\rfloor,0\right\}, \quad
		\ell_k := \ell_k^+ + \ell_k^-.
	\end{equation} 
	At most one of $\ell_k^+$ or $\ell_k^-$ can be nonzero: If $\gamma_k\le\bar\gamma$, then $\ell_k^+=0$, and if $\gamma_k\ge\bar\gamma$, then $\ell_k^-=0$. We think of $\ell_k$ as a measure of how far the proximal parameter $\gamma_k$ is from $\bar{\gamma}$.

	\section{{Deterministic Proximal Setting}}\label{sec:det_specialization}

	This section develops the analysis of APS in the deterministic setting where the candidate oracle returns the exact proximal point in the safe regime and an arbitrary candidate in the unsafe regime, and function differences $f(x)-f(y)$ are available exactly. 
	It is meaningful in proximal-friendly settings where the prox operator is computable. We will show that in this setting, APS finds a point $x$ with small $\dist(0,\partial f(x))$. 

	\subsection{The Oracles and Stationarity Measure}

	\paragraph{Safe-exact candidate oracle.}
	Given a query $(x,\gamma)$ with $\gamma>0$, the candidate oracle returns a point $y$. If $\gamma\le\bar\gamma$, then
	\[
	y = \prox_{\gamma f}(x).
	\]
	If $\gamma>\bar\gamma$, the oracle output can be an arbitrary point in $\dom f$, and hence no accuracy or descent condition is imposed on the output $y$.  

	The asymmetry between the two regimes is essential. Inside the safe regime the proximal subproblem is strongly convex and its unique minimizer can be obtained by any standard solver, and in some cases a closed form solution may exist. Outside the safe regime the same subproblem can be genuinely nonconvex, so a solver may return a candidate that need not be a global minimizer; we therefore impose no requirement on this candidate and let the descent test in APS decide whether the candidate is productive. 

	\paragraph{Exact difference oracle.} Given inputs $x$ and $y$, the difference oracle returns $\Delta=f(x)-f(y)$. 
	Since the difference oracle is exact, in the deterministic setting, we set $\epsilon_f=0$ in APS.

	\paragraph{Stationarity measure.} In the deterministic setting, the goal is to find a point $x$ with $\dist(0,\partial f(x))\le\varepsilon$. We define the stopping time
	\begin{equation}\label{eq:Tepsd}
		T_\varepsilon := \inf\bigl\{k\ge 0 : \dist(0,\partial f(y_k))\le\varepsilon\bigr\},
	\end{equation}
	the first iteration at which APS computes a point that is $\varepsilon$-subgradient-stationary.
	
	In this deterministic setting, as discussed in the last section, we set $\beta_{\mathrm{inc}}=1/\beta_{\mathrm{dec}}$, so $m=1$. The user picks a target accuracy $\varepsilon>0$ and runs \autoref{alg:aps} with the rejection safeguard set to the target accuracy: $\varepsilon_{\mathrm{rej}} = \varepsilon$ and set $\epsilon_f=0$.
	In the next section, we bound the stopping time $T_\varepsilon$ for APS.

	\subsection{Iteration Complexity}

	The iteration complexity analysis builds on two facts: every iteration in the safe regime before the stopping time is successful, and successful iterations with unsafe proximal parameters produce at least $h_{\mathrm{des}}$ guaranteed descent, where
	\begin{equation}\label{eq:h_des_def}
		h_{\mathrm{des}} := \frac{\sigma\bar\gamma\varepsilon_{\mathrm{rej}}^2}{2\beta_{\mathrm{inc}}}
		=\frac{\sigma\bar\gamma\varepsilon^2}{2\beta_{\mathrm{inc}}} = \frac{\sigma\varepsilon^2}{4\rho\beta_{\mathrm{inc}}},
	\end{equation}
	where the second equality uses the choice $\varepsilon_{\mathrm{rej}}=\varepsilon$ in the deterministic setting.

	\begin{theorem}[Deterministic setting: iteration complexity]\label{thm:exact_complexity}
		Suppose Assumptions~\ref{ass:weak_convexity} and~\ref{ass:boundedness} hold. For \autoref{alg:aps} with $\varepsilon_{\mathrm{rej}}=\varepsilon$  and $\beta_{\mathrm{inc}}=1/\beta_{\mathrm{dec}}$, we have
		\begin{equation}\label{eq:Tepsd_bound}
			T_\varepsilon
			\;\le\;
			\ell_0 + \frac{2(f(x_0)-f_{\inf})}{h_{\mathrm{des}}}
			\;=\;
			 \ell_0 + \frac{8\rho\beta_{\mathrm{inc}}(f(x_0)-f_{\inf})}{\sigma\varepsilon^2}
			\;=\; \mathcal O(\varepsilon^{-2}).
		\end{equation}
	\end{theorem}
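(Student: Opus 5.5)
We argue by counting iterations $k<T_\varepsilon$ and splitting them into three types: safe iterations with $\gamma_k\le\bar\gamma$, unsuccessful iterations with $\gamma_k>\bar\gamma$, and successful iterations with $\gamma_k>\bar\gamma$. Since $\beta_{\mathrm{inc}}=1/\beta_{\mathrm{dec}}$, the quantity $\log_{1/\beta_{\mathrm{dec}}}\gamma_k$ moves by exactly $\pm1$ per iteration, so the proximal parameter performs a unit random walk on a lattice. The plan is to show that the walk cannot stay low, and that time spent high is paid for either by descent or by upward moves that themselves cost descent.

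\emph{Step 1: safe iterations before $T_\varepsilon$ succeed.} Take $k<T_\varepsilon$ with $\gamma_k\le\bar\gamma$. Then $y_k=\prox_{\gamma_k f}(x_k)$, and Lemma~\ref{lem:safe_subgrad_cert} gives $d_k\in\partial f(y_k)$, so $\|d_k\|\ge\dist(0,\partial f(y_k))>\varepsilon=\varepsilon_{\mathrm{rej}}$. Lemma~\ref{lem:accept} supplies the descent condition with $\epsilon_f=0$. Hence every such iteration is successful and $\gamma$ increases. Two consequences follow. First, starting below $\bar\gamma$, at most $\ell_0^-$ increases are needed to reach the top lattice level $\le\bar\gamma$. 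Second, once the walk sits at or above that level, it never drops below it before $T_\varepsilon$: a decrease only occurs from $\gamma_k>\bar\gamma$, which sends it to at least $\beta_{\mathrm{dec}}\bar\gamma$, and the level then increases again.

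\emph{Step 2: descent bookkeeping.} Every successful iteration with $\gamma_k>\bar\gamma$ gives $f(x_k)-f(y_k)\ge\frac{\sigma}{2}\gamma_k\varepsilon^2\ge\frac{\sigma}{2}\bar\gamma\varepsilon^2\ge h_{\mathrm{des}}$. More generally, any successful iteration with $\gamma_k\ge\bar\gamma/\beta_{\mathrm{inc}}$ yields at least $h_{\mathrm{des}}$. This covers every successful iteration in the band just below $\bar\gamma$. Since $f(x_k)$ is nonincreasing and bounded below by $f_{\inf}$, the number of successful iterations with $\gamma_k\ge\bar\gamma/\beta_{\mathrm{inc}}$ is at most $(f(x_0)-f_{\inf})/h_{\mathrm{des}}$.

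\emph{Step 3: pairing decreases with increases (main obstacle).} Unsuccessful iterations occur only when $\gamma_k>\bar\gamma$. We account for them with the lattice walk. Each decrease above the band must be matched by an earlier increase from the same level, or else it is one of the initial $\ell_0^+$ excess levels above $\bar\gamma$. All such matching increases happen at $\gamma_k\ge\bar\gamma/\beta_{\mathrm{inc}}$, so they are counted in Step 2. Concretely, the potential $\ell_k^+$ changes by $-1$ at each decrease and by at most $+1$ at each increase. Together with Step 1, which keeps the walk from ever dropping below the band, this gives
\[
\#\text{unsuccessful}\le\ell_0^+ +\#\text{successful with }\gamma_k\ge\bar\gamma/\beta_{\mathrm{inc}}.
\]
The delicate point is the boundary lattice level, where ceilings and floors in \eqref{eq:d0_def} interact. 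We handle it by checking that the level reached after the $\ell_0^-$ climbing steps lies within the band where Step 2 applies.

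\emph{Step 4: summing.} The total count is the $\ell_0^-$ climbing steps, plus at most $(f(x_0)-f_{\inf})/h_{\mathrm{des}}$ successful band steps, plus at most $\ell_0^+ +(f(x_0)-f_{\inf})/h_{\mathrm{des}}$ unsuccessful steps. This gives $T_\varepsilon\le\ell_0+2(f(x_0)-f_{\inf})/h_{\mathrm{des}}$. Substituting $h_{\mathrm{des}}$ from \eqref{eq:h_des_def} yields the stated bound.
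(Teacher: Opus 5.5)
Your proposal is correct and is essentially the paper's argument: safe iterations before the stopping time are successful, successful iterations with $\gamma_k\ge\bar\gamma/\beta_{\mathrm{inc}}$ each pay $h_{\mathrm{des}}$ of descent, and a log-scale potential bounds all remaining iterations. The differences are cosmetic. You work with $T_\varepsilon$ directly instead of the auxiliary $\tau_\varepsilon$, and you track $\ell_k^-$ (the one-time climbing phase) and $\ell_k^+$ separately where the paper sums them into $\ell_k$. You should also state the count as holding for every finite $t\le T_\varepsilon$, which rules out $T_\varepsilon=\infty$.
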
 

	\begin{proof}
		Define 
		\[
		\tau_\varepsilon := \inf\{k\ge 0 : \gamma_k\le\bar\gamma \text{ and } \|d_k\|\le\varepsilon\}.
		\]
		For any index $k$ with $\gamma_k\le\bar\gamma$ and $\|d_k\|\le\varepsilon$, the safe-exact oracle gives $y_k=\prox_{\gamma_k f}(x_k)$, and Lemma~\ref{lem:safe_subgrad_cert} yields $d_k\in\partial f(y_k)$, so $\dist(0,\partial f(y_k))\le\|d_k\|\le\varepsilon$. Hence $T_\varepsilon\le\tau_\varepsilon$, and to establish~\eqref{eq:Tepsd_bound} it suffices to show
		\[
		\tau_\varepsilon \le \ell_0 + 2(f(x_0)-f_{\inf})/h_{\mathrm{des}}.
		\]
		Fix any integer $t\le \tau_\varepsilon$. By definition of $\tau_\varepsilon$, for every $k<t$, either $\gamma_k>\bar\gamma$ or $\|d_k\|>\varepsilon$.

		First, we show that safe iterations before the stopping time are successful.
		Consider any iteration $k<t$ with $\gamma_k\le\bar\gamma$. Since $k < \tau_\varepsilon$, we have $\|d_k\|>\varepsilon=\varepsilon_{\mathrm{rej}}$, so the rejection safeguard is met. Moreover, the safe-exact oracle gives $y_k=\prox_{\gamma_k f}(x_k)$, and Lemma~\ref{lem:accept} then gives
		\[
		f(x_k)-f(y_k) \ge \frac{\sigma}{2}\gamma_k\|d_k\|^2.
		\]
		Since $\Delta_k = f(x_k)-f(y_k)$ and $\epsilon_f=0$, the sufficient descent condition holds. Therefore, iteration $k$ is successful. Hence, for every $k<t$,
		\begin{equation}\label{eq:safe_implies_succ_det}
			(1-U_k)(1-\Theta_k) = 0.
		\end{equation}
		Next, we lower bound the number of unsafe successful iterations using a counting argument.
		Set
		\begin{itemize}
			\item $N^{\rm unsafe}_+ := \sum_{k=0}^{t-1}U_k\Theta_k$ (number of unsafe successful iterations),
			\item $N^{\rm unsafe}_- := \sum_{k=0}^{t-1}U_k(1-\Theta_k)$ (number of unsafe unsuccessful iterations),
			\item $N^{\rm safe}_+ := \sum_{k=0}^{t-1}\Theta_k(1-U_k)$ (number of safe successful iterations).
		\end{itemize}
		By~\eqref{eq:safe_implies_succ_det} safe unsuccessful iterations do not occur, so 
		\[
		t = N^{\rm safe}_+ + N^{\rm unsafe}_+ + N^{\rm unsafe}_-.
		\] 
		Define $r_k := \ln(\gamma_k/\bar\gamma)/\ln(1/\beta_{\mathrm{dec}})$, so that the distance $\ell_k$ from~\eqref{eq:d0_def} is
		\[
		\ell_k = \max\{\lceil r_k\rceil,\,0\} + \max\{\lfloor -r_k\rfloor,\,0\}.
		\]
		This equals $\lceil r_k\rceil$ when $\gamma_k>\bar\gamma$ and $\lfloor -r_k\rfloor$ when $\gamma_k\le\bar\gamma$, so $\ell_k\ge 0$. 

		A successful iteration sends $r_{k+1}=r_k+1$ and an unsuccessful one sends $r_{k+1}=r_k-1$. We check how $\ell_k$ changes in all of the possible cases:
		\begin{itemize}
			\item \emph{Unsafe successful}: either $\gamma_k\le\bar\gamma<\gamma_{k+1}$ with $r_k\in(-1,0]$, giving $\ell_k=0$ and $\ell_{k+1}=\lceil r_k+1\rceil=1$; or $\gamma_k>\bar\gamma$, giving $\ell_k=\lceil r_k\rceil$ and $\ell_{k+1}=\lceil r_k+1\rceil=\ell_k+1$. In either case $\ell_{k+1}=\ell_k+1$.
			\item \emph{Unsafe unsuccessful} ($\gamma_k>\bar\gamma$): either $\gamma_{k+1}\le\bar\gamma$ with $r_k\in(0,1]$, giving $\ell_k=1$ and $\ell_{k+1}=\lfloor 1-r_k\rfloor=0$; or $\gamma_{k+1}>\bar\gamma$ with $r_k>1$, giving $\ell_k=\lceil r_k\rceil$ and $\ell_{k+1}=\lceil r_k-1\rceil=\ell_k-1$. In either case $\ell_{k+1}=\ell_k-1$.
			\item \emph{Safe successful} ($\gamma_k,\gamma_{k+1}\le\bar\gamma$): $r_k\le -1$ and $r_{k+1}=r_k+1\le 0$, so $\ell_{k+1}=\lfloor -r_k-1\rfloor=\ell_k-1$.
		\end{itemize}
		Summing,
		\[
		\ell_t-\ell_0 = N^{\rm unsafe}_+ - N^{\rm safe}_+ - N^{\rm unsafe}_-,
		\]
		and since $\ell_t\ge 0$,
		\begin{equation}\label{eq:t_bound_det}
			 t = N^{\rm safe}_+ + N^{\rm unsafe}_+ + N^{\rm unsafe}_- \le 2N^{\rm unsafe}_+ + \ell_0.
		\end{equation}

		Finally, we show that unsafe successful iterations produce sufficient descent.
		If $U_k\Theta_k=1$, then $\gamma_k\ge\bar\gamma/\beta_{\mathrm{inc}}$, the rejection safeguard $\|d_k\|\ge\varepsilon_{\mathrm{rej}}=\varepsilon$ holds, and the sufficient descent test gives $f(x_k)-f(y_k)  \ge \frac{\sigma}{2}\gamma_k\|d_k\|^2$. Therefore
		\[
		f(x_k)-f(x_{k+1}) \ge \frac{\sigma}{2}\gamma_k\varepsilon^2 \ge \frac{\sigma\bar\gamma\varepsilon^2}{2\beta_{\mathrm{inc}}} = h_{\mathrm{des}}.
		\]
		All other iterations contribute nonnegative descent. Summing the above inequality and using $f(x_t)\ge f_{\inf}$,
		\begin{equation}\label{eq:descent_det}
			h_{\mathrm{des}}\,N^{\rm unsafe}_+ \le f(x_0)-f_{\inf}.
		\end{equation}
		Substituting~\eqref{eq:descent_det} into~\eqref{eq:t_bound_det} gives $t \le \ell_0 + 2(f(x_0)-f_{\inf})/h_{\mathrm{des}}$ for every integer $t\le \tau_\varepsilon$. Thus
		\[
		\tau_\varepsilon\le \ell_0 + \frac{2(f(x_0)-f_{\inf})}{h_{\mathrm{des}}},
		\]
		which establishes~\eqref{eq:Tepsd_bound}.

	\end{proof}

	Theorem~\ref{thm:exact_complexity} bounds the stopping time $T_\varepsilon$. The following corollary provides a concrete way to obtain a point satisfying $\dist(0, \partial f(x)) \leq \varepsilon$ after running the algorithm. 

	\begin{corollary}\label{thm:det_offline}
		Suppose Assumptions~\ref{ass:weak_convexity} and~\ref{ass:boundedness} hold, and run \autoref{alg:aps} with $\varepsilon_{\mathrm{rej}}=\varepsilon$, and $\beta_{\mathrm{inc}}=1/\beta_{\mathrm{dec}}$. For any horizon $N$ satisfying
		\begin{equation}\label{eq:N_horizon}
			N > \ell_0 + \frac{2(f(x_0)-f_{\inf})}{h_{\mathrm{des}}},
		\end{equation}
		the set $\mathcal K_N := \{k<N : \|d_k\|\le\varepsilon\}$ is nonempty, and the output
		\[
		x_{\rm out} := y_{k_*},\qquad k_* \in \arg\min_{k\in\mathcal K_N}\gamma_k,
		\]
		satisfies $\dist(0,\partial f(x_{\rm out}))\le\varepsilon$.
	\end{corollary}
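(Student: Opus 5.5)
The plan is to reuse the stronger statement established inside the proof of Theorem~\ref{thm:exact_complexity}. That proof controls not only $T_\varepsilon$ but the auxiliary stopping time
\[
\tau_\varepsilon=\inf\{k\ge 0:\gamma_k\le\bar\gamma\text{ and }\|d_k\|\le\varepsilon\}.
\]
It shows that every integer $t\le\tau_\varepsilon$ satisfies $t\le \ell_0+2(f(x_0)-f_{\inf})/h_{\mathrm{des}}$.

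\emph{Step 1: $\tau_\varepsilon$ is finite and smaller than $N$.} Suppose instead that $\tau_\varepsilon\ge N$ (this includes $\tau_\varepsilon=+\infty$). Then $t=N$ is an admissible integer with $t\le\tau_\varepsilon$. The bound from the proof of Theorem~\ref{thm:exact_complexity} gives $N\le \ell_0+2(f(x_0)-f_{\inf})/h_{\mathrm{des}}$, which contradicts~\eqref{eq:N_horizon}. Hence $\tau_\varepsilon<N$.

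\emph{Step 2: $\mathcal K_N$ is nonempty.} The infimum defining $\tau_\varepsilon$ is over integers, so it is attained. At $k=\tau_\varepsilon$ we have $\|d_{\tau_\varepsilon}\|\le\varepsilon$ and $\tau_\varepsilon<N$, so $\tau_\varepsilon\in\mathcal K_N$. In addition $\gamma_{\tau_\varepsilon}\le\bar\gamma$.

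\emph{Step 3: the selected index is safe.} The set $\mathcal K_N$ is finite, so $k_*$ exists. By minimality of $\gamma_{k_*}$ over $\mathcal K_N$,
\[
\gamma_{k_*}\le\gamma_{\tau_\varepsilon}\le\bar\gamma .
\]
This is the key point of the selection rule. Membership in $\mathcal K_N$ alone certifies only $\|d_k\|\le\varepsilon$, which is meaningless on unsafe iterations where $y_k$ is arbitrary. Choosing the smallest $\gamma$ forces the index into the safe regime, and the algorithm does this without knowing $\bar\gamma$.

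\emph{Step 4: conclude.} Since $\gamma_{k_*}\le\bar\gamma$, the safe-exact oracle returns $y_{k_*}=\prox_{\gamma_{k_*}f}(x_{k_*})$. Lemma~\ref{lem:safe_subgrad_cert} then gives $d_{k_*}\in\partial f(y_{k_*})$. Therefore
\[
\dist(0,\partial f(x_{\rm out}))\le\|d_{k_*}\|\le\varepsilon .
\]

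The only real obstacle is Step 1. Theorem~\ref{thm:exact_complexity} is stated for $T_\varepsilon$, but we need the bound for $\tau_\varepsilon$, including the possibility that $\tau_\varepsilon$ is infinite. I handle this by invoking the ``for every integer $t\le\tau_\varepsilon$'' form of the argument at $t=N$, rather than the stated bound on $T_\varepsilon$. The remaining steps are immediate.
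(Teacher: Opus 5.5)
Your proposal is correct and follows essentially the same route as the paper: bound $\tau_\varepsilon$ via the proof of Theorem~\ref{thm:exact_complexity} to get $\tau_\varepsilon\in\mathcal K_N$, then use minimality of $\gamma_{k_*}$ to force $\gamma_{k_*}\le\gamma_{\tau_\varepsilon}\le\bar\gamma$, and conclude with the safe-exact oracle and Lemma~\ref{lem:safe_subgrad_cert}. Your use of the ``every integer $t\le\tau_\varepsilon$'' form at $t=N$ to rule out $\tau_\varepsilon\ge N$ (including $\tau_\varepsilon=\infty$) spells out a step the paper leaves implicit.
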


	\begin{proof}
		The proof of Theorem~\ref{thm:exact_complexity} bounds the safe small-residual hitting time $\tau_\varepsilon := \inf\{k\ge 0 : \gamma_k\le\bar\gamma,\ \|d_k\|\le\varepsilon\}$ by the right-hand side of~\eqref{eq:N_horizon}, so $\tau_\varepsilon < N$ and hence $\tau_\varepsilon\in\mathcal K_N$. Since $k_*$ minimizes $\gamma_k$ over $\mathcal K_N$, we have $\gamma_{k_*}\le\gamma_{\tau_\varepsilon}\le\bar\gamma$. The safe-exact oracle then gives $y_{k_*}=\prox_{\gamma_{k_*}f}(x_{k_*})$, and Lemma~\ref{lem:safe_subgrad_cert} gives $d_{k_*}\in\partial f(y_{k_*})$. Therefore $\dist(0,\partial f(x_{\rm out}))\le\|d_{k_*}\|\le\varepsilon$.
	\end{proof}

	 In words: after running \autoref{alg:aps}, we collect iterates with small residuals ($\|d_k\|\le\varepsilon$), pick the one with the smallest proximal parameter $\gamma_k$ among them, and return its candidate $y_k$ as $x_{\rm out}$. By Corollary~\ref{thm:det_offline}, this $x_{\rm out}$ satisfies $\dist(0,\partial f(x_{\rm out}))\le\varepsilon$. In practice, one can run the algorithm until the residual $\|d_k\|$ has been small for a while and then provide the output as discussed.
	
	\section{{Stochastic Proximal Setting}}\label{sec:inexact_backtracking_proximal_point}

	We now turn to the stochastic setting, where both the candidate step and the descent test must be carried out under noise. In this setting, \autoref{alg:aps} is run with a \emph{stochastic proximal oracle} (SPO) that is only required to be sufficiently accurate with a constant probability inside the safe regime $\gamma\le\bar\gamma$, and a \emph{stochastic difference oracle} (SDO) with possibly biased and heavy-tailed errors. Each iteration's accept/reject decision is now driven by noisy estimates rather than exact ones, and the analysis must accommodate both false acceptances and false rejections caused by oracle noise. In \autoref{sec:model_oracles}, we will reuse the arguments with a stochastic model-based oracle (SMO) in place of the SPO.
	
	\subsection{Stochastic Oracles}\label{sec:oracles}
	 
	In the stochastic setting, neither the proximal point nor exact function differences are available. APS therefore relies on two stochastic oracles: a \emph{stochastic difference oracle} (SDO) that returns a noisy estimate of $f(x)-f(y)$ for any queried pair $(x,y)$, and a \emph{stochastic proximal oracle} (SPO) that returns some candidate point given a center $x$ and a proximal parameter $\gamma$. We highlight one structural feature of the SPO. The SPO operates in two regimes: a \emph{safe regime} $\gamma\le 1/(2\rho)$, and the complementary regime $\gamma > 1/(2\rho)$. The accuracy requirement we impose on the SPO is deliberately lenient: we only ask that it be sufficiently accurate with some probability in the safe regime, and we impose no accuracy requirement whatsoever in the complementary regime. The two oracles are formally defined below.
	
	\paragraph{Stochastic Difference Oracle (SDO($\epsilon_f, q, \zeta_q$)).}
	Given a pair $(x,y)$,  with $x,y\in\dom f$, the oracle returns $\Delta(x,y,\Xi^\Delta(x,y))$, or $\Delta(x,y,\Xi^\Delta)$ for short, a random estimate of the difference $f(x)-f(y)$. The distribution of the random variable $\Xi^\Delta(x,y)$ may depend on $x$ and $y$. We allow the error in the estimate to be \emph{heavy-tailed}: rather than imposing a sub-Gaussian, sub-exponential, or bounded-noise assumption, we control only a finite $q$-th absolute central moment for some $q\ge 2$. Its accuracy is characterized through the absolute error
	\[
	\mathcal{E}(x,y,\Xi^\Delta):=\left|\Delta(x,y,\Xi^\Delta)-(f(x)-f(y))\right|,
	\]
	which we assume has bounded expectation and bounded $q$-th absolute central moment, for some $q\geq 2$:
	\begin{equation}\label{eq:zero_order_1}
		\mathbb{E}[\mathcal{E}(x,y,\Xi^\Delta)] \le \epsilon_f \quad \text{and} \quad \mathbb{E}\big[|\mathcal{E}(x,y,\Xi^\Delta)-\mathbb{E}[\mathcal{E}(x,y,\Xi^\Delta)]|^q\big] \le \zeta_q.
	\end{equation}
	The exponent $q$ governs how heavy the tails are allowed to be. In particular, $q=2$ reduces the assumption to bounded expectation and bounded variance. Larger $q$ corresponds to higher moments of the error being bounded, and sub-exponential and sub-Gaussian errors -- whose tails decay exponentially and admit bounded moments of all orders -- satisfy the assumption for every $q\ge 2$. 
	
	\begin{remark}[Two noisy function-value estimates as a special case]
		When only separate noisy function-value estimates $F_x \approx f(x)$ and $F_y \approx f(y)$ are available, the natural construction $\Delta(x,y) := F_x - F_y$ gives a
		difference estimate with error
		\[
		\left|\Delta(x,y) - (f(x)-f(y))\right|
		\le |F_x-f(x)| + |F_y-f(y)|;
		\]
		that is, the individual estimation errors add. Thus, whenever the right-hand side satisfies the required moment bounds, this construction fits the SDO model. The SDO
		formulation is more general, however, since it also admits coupled estimators, most notably common random numbers, where correlation between the two estimates can
		reduce the variance of the estimated difference.
	\end{remark}
	
	\paragraph{Stochastic Proximal Oracle (SPO($\nu(\cdot),\delta_p$)).}
	Given a point $x$ and a proximal parameter $\gamma > 0$, the oracle returns a candidate point $y(x, \gamma, \Xi^p(x,\gamma))$, or $y(x, \gamma, \Xi^p)$ for short, intended as a (random) approximate minimizer of the proximal subproblem
	\[
	\Phi(y) := f(y) + \frac{1}{2\gamma}\|y-x\|^2,
	\qquad
	\Phi^* := \min_{y} \Phi(y);
	\] 
	the minimum is attained for every $\gamma > 0$ but may be non-unique when $\gamma\rho \ge 1$. The distribution of the random variable $\Xi^p(x,\gamma)$ may depend on $x$ and $\gamma$. We impose a probabilistic accuracy requirement on the SPO output \emph{only in the safe regime} $\gamma \le \frac{1}{2\rho}$, where $\Phi$ is strongly convex and the proximal point $\prox_{\gamma f}(x)$ is uniquely defined. The requirement is that, with probability at least $1-\delta_p$ (conditional on the past), the optimality gap is bounded by an accuracy tolerance function $\nu(\gamma) > 0$:
	\begin{equation}\label{eq:prox_oracle}
		\mathbb{P}\!\bigl(\Phi(y(x,\gamma,\Xi^p)) - \Phi^* \le \nu(\gamma)\bigr) \ge 1 - \delta_p
		\qquad\text{for } \gamma \le \tfrac{1}{2\rho}.
	\end{equation}
	The function $\nu(\cdot)$ may depend on the parameter $\gamma$ and on other algorithm parameters; we will provide a concrete choice later. The pair $(\nu(\cdot), \delta_p)$, with $\delta_p \in (0,1)$, is intrinsic to the oracle. The oracle output is assumed to lie in $\dom f$ almost surely. In the safe regime $\gamma \le 1/(2\rho)$, $\Phi$ is $\mu$-strongly convex with $\mu = 1/\gamma - \rho \ge 1/(2\gamma)$, so the optimality-gap bound~\eqref{eq:prox_oracle} translates to a distance bound on the SPO output:
	\[
	\|y(x,\gamma,\Xi^p) - \prox_{\gamma f}(x)\| \;\le\; \sqrt{2\nu(\gamma)/\mu} \;\le\; 2\sqrt{\nu(\gamma)\,\gamma}
	\qquad\text{for } \gamma \le \tfrac{1}{2\rho}.
	\]
	
	\paragraph{Realizing the SPO.}
	The SPO can be realized by applying any first-order stochastic solver to the proximal subproblem $\Phi_k(y) = f(y) + \tfrac{1}{2\gamma_k}\|y-x_k\|^2$. On iterations with $\gamma_k\le\bar\gamma$, $\Phi_k$ is strongly convex; in this regime a standard stochastic subgradient solver attains the in-expectation rate $\mathbb E[\Phi_k(Y_k)-\Phi_k^*]=\mathcal O(1/T)$ that strong convexity affords, where $T$ is the number of steps the solver runs (e.g., \cite{Grimmer2019NonLipschitz} or \cite{LacosteJulienSchmidtBach2012}). Markov's inequality then converts this in-expectation guarantee into probabilistic accuracy guarantee required by the SPO. On iterations with $\gamma_k>\bar\gamma$, the same routine is run as a heuristic candidate generator; the returned point is treated only as a candidate for~\eqref{eq:unified_test}.
	
	Outside the safe regime ($\gamma > 1/(2\rho)$), no accuracy requirement is imposed on the SPO output: the returned point is treated only as a candidate, with no proximal-map or Moreau-gradient interpretation, and is accepted only when it passes the descent test.

	The deterministic setting of Section~\ref{sec:det_specialization} is the noiseless special case in which the SDO returns the exact difference $\Delta(x,y,\Xi^\Delta)\equiv f(x)-f(y)$ (so $\epsilon_f=\zeta_q=0$) and the SPO returns the exact proximal point on safe calls (so $\delta_p=0$ and $\nu(\cdot)\equiv 0$ inside the safe regime); on unsafe calls the SPO is again unconstrained.
	
	\paragraph{Stationarity measure.}
	Recall that we fix the reference parameter
	$
	\bar{\gamma} = \frac{1}{2\rho}.
	$ This is the standard scale at which stationarity is measured in the weakly-convex literature \cite{DavisGrimmerProximallyGuided,DDStochModelBased}: it is the largest proximal parameter at which the strong-convexity parameter of the proximal subproblem stays at least $\rho$ (leaving a safety margin from the degenerate boundary $\rho\gamma = 1$). The Moreau gradient at this parameter, $\nabla f_{\bar\gamma}(x) = \bar\gamma^{-1}(x - \prox_{\bar\gamma f}(x))$, is well defined and its norm serves as the way to measure stationarity throughout the analysis.
	
	\begin{definition}[Moreau-envelope stationarity]\label{def:eps_moreau_stat}
		A point $x \in \mathbb{R}^d$ is called \emph{$\varepsilon$-stationary in the Moreau-envelope sense} if $\|\nabla f_{\bar{\gamma}}(x)\| \le \varepsilon$.
	\end{definition}
	
	\begin{definition}[Stopping time, stochastic setting]\label{def:Teps_stoch}
		The stopping time associated with APS in this setting is
		\begin{equation}\label{eq:Teps_stoch}
			T_\varepsilon := \inf\bigl\{ k \ge 0 : \|\nabla f_{\bar\gamma}(x_k)\| \le \varepsilon\bigr\}.
		\end{equation}
	\end{definition}

	\subsection{Filtration and True Iteration}\label{sec:stoch_model}\label{sec:hp}

We use uppercase letters to denote random quantities, and all probabilities and expectations are taken with respect to the SPO and SDO randomness. Take $X_0=x_0$ and $\Gamma_0=\gamma_0$ to be deterministic. At iteration $k$, we define the following random variables:
	\begin{itemize}
		\item $X_k$: the iterate at the start of iteration $k$, with realization $x_k$;
		\item $\Gamma_k$: the proximal parameter at the start of iteration $k$, with realization $\gamma_k$;
		\item $Y_k$: the SPO output at iteration $k$, with realization $y_k$;
		\item $D_k := (X_k-Y_k)/\Gamma_k$: the Moreau-gradient proxy, with realization $d_k$;
		\item $\Delta_k := \Delta(X_k,Y_k,\Xi_k^\Delta)$: the SDO output at $(X_k,Y_k)$;
		\item $\mathcal E_k := \bigl|\Delta_k-\bigl(f(X_k)-f(Y_k)\bigr)\bigr|$: the absolute SDO error.
	\end{itemize}
	APS generates the stochastic process $\{(X_k,\Gamma_k,Y_k,D_k,\Delta_k,\mathcal E_k)\}$ adapted to the filtration $\{\mathcal F_k : k\ge -1\}$, where $\mathcal F_{-1}$ is trivial and $\mathcal F_k := \sigma(Y_0,\Delta_0,\ldots,Y_k,\Delta_k)$. All the random variables above are $\mathcal F_k$-measurable; the state $(X_k,\Gamma_k)$ is in addition $\mathcal F_{k-1}$-measurable. Within an iteration the SPO is queried before the SDO, so we will also use the intermediate $\sigma$-algebra
	\[
	\mathcal G_k := \mathcal F_{k-1}\vee\sigma(Y_k),
	\]
	which records the information available immediately after the SPO call and satisfies $\mathcal F_{k-1}\subseteq\mathcal G_k\subseteq\mathcal F_k$. The candidate $Y_k$ and $D_k$ are $\mathcal G_k$-measurable.

 Let $\Phi_k(y):=f(y)+\|y-X_k\|^2/(2\Gamma_k)$ and $\Phi_k^*:=\min_y\Phi_k(y)$ denote the random instance of the proximal subproblem~\eqref{eq:prox_oracle} from Section~\ref{sec:oracles}. We now introduce the definition of a true iteration, and an assumption related to it.

\begin{definition}[True iteration]\label{def:true_iter}
Let
$
\nu(\gamma) = \frac{\varepsilon^2\gamma}{32}.
$
Define the indicator random variable
\[
J_k :=
\begin{cases}
\mathbf 1\{\Phi_k(Y_k)-\Phi_k^*\le \nu(\Gamma_k)\}, & \Gamma_k\le \bar\gamma,\\[1mm]
1, & \Gamma_k>\bar\gamma.
\end{cases}
\]
We say iteration $k$ is \emph{true} if $J_k = 1$ and  the SDO error satisfies $\mathcal E_k\le \epsilon_f$.
We define the indicator
\[
I_k := \mathbf 1\{\text{iteration $k$ is true}\}.
\]
\end{definition}

\begin{assumption}[Probability of true iterations]\label{ass:true_iter_prob_hp}
There exists $p\in(0,1]$ such that, for every $k\ge0$,
\[
\mathbb P(I_k=1\mid \mathcal F_{k-1})\ge p
\qquad
\text{a.s. on } \{k<T_\varepsilon\}.
\]
\end{assumption}
One sufficient condition for \autoref{ass:true_iter_prob_hp} to hold is if $\mathbb P(\mathcal E_k\le\epsilon_f\mid\mathcal G_k)\ge 1-\delta_d$ and $\mathbb P(J_k=1\mid\mathcal F_{k-1})\ge 1-\delta_p$. In that case, \autoref{ass:true_iter_prob_hp} holds with $p = 1 - \delta_p - \delta_d$ by a union bound; no independence assumptions between SDO and SPO are needed.

\subsection{High-Probability Iteration Complexity}\label{sec:hp_thm}

To achieve the high-probability iteration complexity result, we will first show that safe true iterations necessarily pass the noisy descent test, and then use a counting argument to prove sufficiently many true iterations force either a small Moreau gradient or enough observable descent. For the analysis in the stochastic setting, we will take $\varepsilon_{\mathrm{rej}} = \frac{\varepsilon}{4}$.

\begin{lemma}[Safe true iterations are successful]\label{lem:st_implies_s}
Suppose Assumption~\ref{ass:weak_convexity} holds.
For any iteration $k<T_\varepsilon$, if $U_k = 0$ and $I_k=1$, then $\Theta_k=1$. Consequently,
\[
I_k(1-U_k)\le \Theta_k(1-U_k)
\qquad\text{for all }k<T_\varepsilon.
\]
\end{lemma}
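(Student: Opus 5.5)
We need to show that if $U_k=0$ (so $\Gamma_k\le\bar\gamma$), $I_k=1$, and $k<T_\varepsilon$, then both conditions of \eqref{eq:unified_test} hold. The plan is to handle the safeguard $\|D_k\|\ge\varepsilon_{\mathrm{rej}}=\varepsilon/4$ first, and then the noisy descent condition.

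\emph{Step 1: the safeguard.} Since $k<T_\varepsilon$, we have $\|\nabla f_{\bar\gamma}(X_k)\|>\varepsilon$. Because $\Gamma_k\le\bar\gamma$, Lemma~\ref{cor:moreau_factor2} (applied with $\gamma=\Gamma_k$, $\lambda=\bar\gamma$) gives
\[
\|\nabla f_{\Gamma_k}(X_k)\|\ge \tfrac12\|\nabla f_{\bar\gamma}(X_k)\|>\varepsilon/2 .
\]
Truth of the iteration gives $J_k=1$, i.e.\ $\Phi_k(Y_k)-\Phi_k^*\le\nu(\Gamma_k)=\varepsilon^2\Gamma_k/32$. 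Lemma~\ref{lem:consistency} then bounds the perturbation $\|D_k-\nabla f_{\Gamma_k}(X_k)\|\le 2\sqrt{\nu/\Gamma_k}=2\varepsilon/\sqrt{32}<\varepsilon/4$. By the triangle inequality, $\|D_k\|>\varepsilon/2-\varepsilon/4=\varepsilon/4$, so the safeguard holds.

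\emph{Step 2: true descent.} This step is the main obstacle, because Lemma~\ref{lem:accept} applies only to the exact prox point, while $Y_k$ is inexact. I would redo its strong-convexity argument with the inexact candidate. Let $\bar y=\prox_{\Gamma_k f}(X_k)$. Then
\[
f(X_k)=\Phi_k(X_k)\ge \Phi_k^*+\tfrac{\mu}{2}\|X_k-\bar y\|^2\ge \Phi_k(Y_k)-\nu+\tfrac{\mu}{2}\|X_k-\bar y\|^2,
\]
with $\mu=1/\Gamma_k-\rho\ge 1/(2\Gamma_k)$. This yields
\[
f(X_k)-f(Y_k)\ge \tfrac{\Gamma_k}{2}\|D_k\|^2+\tfrac{\mu\Gamma_k^2}{2}\|\nabla f_{\Gamma_k}(X_k)\|^2-\nu .
\]
It then suffices to show this is at least $\tfrac{\sigma}{2}\Gamma_k\|D_k\|^2$. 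Since $\sigma<1$, dropping the middle term leaves $\tfrac{\Gamma_k}{2}(1-\sigma)\|D_k\|^2-\nu$, which need not be nonnegative for $\sigma$ close to $1$. So I would instead use the middle term, which is at least $\tfrac{\Gamma_k}{4}\|\nabla f_{\Gamma_k}(X_k)\|^2>\Gamma_k\varepsilon^2/16$. This exceeds $\nu=\Gamma_k\varepsilon^2/32$, so the right-hand side is at least $\tfrac{\Gamma_k}{2}\|D_k\|^2\ge\tfrac{\sigma}{2}\Gamma_k\|D_k\|^2$.

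\emph{Step 3: the noisy test.} Truth also gives $\mathcal E_k\le\epsilon_f$, hence
\[
\Delta_k\ge f(X_k)-f(Y_k)-\epsilon_f\ge \tfrac{\sigma}{2}\Gamma_k\|D_k\|^2-\epsilon_f,
\]
which is the first condition of \eqref{eq:unified_test}. Both conditions therefore hold, so $\Theta_k=1$. The displayed inequality $I_k(1-U_k)\le\Theta_k(1-U_k)$ follows because the left side is nonzero only when $U_k=0$ and $I_k=1$.
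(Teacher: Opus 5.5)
Your Steps 2 and 3 are fine. Your strong-convexity bound $f(X_k)-f(Y_k)\ge\tfrac{\Gamma_k}{2}\|D_k\|^2+\tfrac{\Gamma_k(1-\rho\Gamma_k)}{2}\|\nabla f_{\Gamma_k}(X_k)\|^2-\nu(\Gamma_k)$ is exactly the inequality the paper derives through Lemma~\ref{lem:accept}, and there the crude constants suffice because $\Gamma_k\varepsilon^2/16>\Gamma_k\varepsilon^2/32$.

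Step 1, however, fails as written: the claim $2\varepsilon/\sqrt{32}<\varepsilon/4$ is false, since $2/\sqrt{32}=1/(2\sqrt2)\approx0.354>1/4$. Combining the factor-$2$ forms of Lemmas~\ref{cor:moreau_factor2} and~\ref{lem:consistency} only gives $\|D_k\|>\bigl(\tfrac12-\tfrac1{2\sqrt2}\bigr)\varepsilon\approx0.146\,\varepsilon$. This does not reach $\varepsilon_{\mathrm{rej}}=\varepsilon/4$, so the safeguard is not established. The reason is that the two simplified constants are attained at opposite ends of $(0,\bar\gamma]$. The comparison factor $\tfrac12$ matches the refined factor $\tfrac{1}{2(1-\rho\Gamma_k)}$ only as $\Gamma_k\to0$. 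The perturbation factor $2$ matches the refined factor $\sqrt{2/(1-\rho\Gamma_k)}$ only at $\Gamma_k=\bar\gamma$. Using both worst cases simultaneously loses too much.

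The missing idea is to keep the common factor $1-\rho\Gamma_k$ in both bounds, as the paper does. Lemma~\ref{cor:moreau_factor2} with $\lambda=\bar\gamma$ (so $1-\lambda\rho=\tfrac12$) gives $\|\nabla f_{\Gamma_k}(X_k)\|>\varepsilon/\bigl(2(1-\rho\Gamma_k)\bigr)$. The sharper form of Lemma~\ref{lem:consistency} gives $\|D_k-\nabla f_{\Gamma_k}(X_k)\|\le\sqrt{2\nu(\Gamma_k)/\bigl(\Gamma_k(1-\rho\Gamma_k)\bigr)}=\varepsilon/\bigl(4\sqrt{1-\rho\Gamma_k}\bigr)$. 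Writing $s=\sqrt{1-\rho\Gamma_k}\in[1/\sqrt2,1]$, the triangle inequality then yields $\|D_k\|>\varepsilon\,\tfrac{2-s}{4s^2}\ge\tfrac{\varepsilon}{4}$, because $s^2+s\le2$ for $s\le1$. With this replacement for Step 1, the rest of your argument goes through unchanged.
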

\begin{proof}
Fix $k<T_\varepsilon$ with $U_k = 0$ and $I_k=1$. Since $U_k = 0$, we know $\Gamma_k \leq \bar{\gamma}$. Thus \(1-\rho\Gamma_k\in[1/2,1]\). Since $k<T_\varepsilon$,
\[
\|\nabla f_{\bar\gamma}(X_k)\|>\varepsilon.
\]
By Lemma~\ref{cor:moreau_factor2}, applied with $\gamma=\Gamma_k$ and $\lambda=\bar\gamma$,
\[
\|\nabla f_{\Gamma_k}(X_k)\|>
\frac{\varepsilon}{2(1-\rho\Gamma_k)}.
\]
Because $I_k=1$ implies $J_k=1$, the SPO error satisfies 
$$\Phi_k(Y_k)-\Phi_k^*\le \nu(\Gamma_k)
= \frac{\varepsilon^2\Gamma_k}{32},$$ so Lemma~\ref{lem:consistency} gives
\[
\|D_k-\nabla f_{\Gamma_k}(X_k)\|
\le
\sqrt{\frac{2\nu(\Gamma_k)}{\Gamma_k(1-\rho\Gamma_k)}}=\frac{\varepsilon}{4\sqrt{1-\rho\Gamma_k}}.
\]
Hence
\[
\begin{aligned}
\|D_k\| &\ge \|\nabla f_{\Gamma_k}(X_k)\|-\|D_k-\nabla f_{\Gamma_k}(X_k)\| \\
&> \varepsilon\left(\frac{1}{2(1-\rho\Gamma_k)}-\frac{1}{4\sqrt{1-\rho\Gamma_k}}\right)\ge \frac{\varepsilon}{4} = \varepsilon_{\mathrm{rej}},
\end{aligned}
\]
so the rejection safeguard is satisfied. For the middle inequality, set \(s=\sqrt{1-\rho\Gamma_k}\in[1/\sqrt 2,1]\); then
\((2-s)/(4s^2)\ge 1/4\).

It remains to verify the noisy descent inequality. Let $\bar Y_k:=\prox_{\Gamma_k f}(X_k)$ be the minimizer of $\Phi_k$. Since $\Gamma_k\le\bar\gamma$, $\Phi_k$ is strongly convex, and hence $\bar Y_k$ is unique. Since $J_k=1$,
\[
\Phi_k(Y_k)\le \Phi_k(\bar Y_k)+\nu(\Gamma_k),
\]
and therefore
\begin{equation}\label{eq:inexact_prox_descent_calc}
f(Y_k)
\le
f(\bar Y_k)
+\frac{1}{2\Gamma_k}\bigl(\|\bar Y_k-X_k\|^2-\|Y_k-X_k\|^2\bigr)
+\nu(\Gamma_k).
\end{equation}
Lemma~\ref{lem:accept}, applied to the exact proximal point $\bar Y_k$, gives
\[
f(X_k)-f(\bar Y_k)
\ge
\left(\frac{1}{\Gamma_k}-\frac{\rho}{2}\right)\|\bar Y_k-X_k\|^2.
\]
Combining this inequality with~\eqref{eq:inexact_prox_descent_calc} yields
\begin{align*}
f(X_k)-f(Y_k)-\frac{\sigma}{2}\Gamma_k\|D_k\|^2
&= f(X_k)-f(Y_k)-\frac{\sigma}{2\Gamma_k}\|Y_k-X_k\|^2 \\
&\ge
\frac{1-\rho\Gamma_k}{2\Gamma_k}\|\bar Y_k-X_k\|^2
+\frac{1-\sigma}{2\Gamma_k}\|Y_k-X_k\|^2
-\nu(\Gamma_k) \\
&\ge
\frac{\Gamma_k(1-\rho\Gamma_k)}{2}\|\nabla f_{\Gamma_k}(X_k)\|^2-\nu(\Gamma_k).
\end{align*}
Using $\|\nabla f_{\Gamma_k}(X_k)\|>\varepsilon/(2(1-\rho\Gamma_k))$ and the definition of $\nu(\Gamma_k)$,
\[
\frac{\Gamma_k(1-\rho\Gamma_k)}{2}\|\nabla f_{\Gamma_k}(X_k)\|^2-\nu(\Gamma_k)
>
\frac{\Gamma_k\varepsilon^2}{8(1-\rho\Gamma_k)}
-
\frac{\Gamma_k\varepsilon^2}{32}
\ge
\frac{3\Gamma_k\varepsilon^2}{32}
>0.
\]
The last inequality uses \(1-\rho\Gamma_k\le 1\).
Thus
\[
f(X_k)-f(Y_k)\ge \frac{\sigma}{2}\Gamma_k\|D_k\|^2.
\]
Finally, $I_k=1$ also gives $\mathcal E_k\le\epsilon_f$, so
\[
\Delta_k
\ge f(X_k)-f(Y_k)-\mathcal E_k
\ge \frac{\sigma}{2}\Gamma_k\|D_k\|^2-\epsilon_f.
\]
Both parts of the acceptance test hold, hence $\Theta_k=1$. 
\end{proof}

\begin{lemma}[Progress from successful steps]\label{lem:progress_bound} 
For every successful iteration $k$,
\begin{equation}\label{eq:any_succ_progress}
f(X_k)-f(X_{k+1})
\ge
\frac{\sigma}{2}\Gamma_k\varepsilon_{\mathrm{rej}}^2-\epsilon_f-\mathcal E_k.
\end{equation}
For every unsuccessful iteration $k$, $f(X_k)-f(X_{k+1})=0$.
\end{lemma}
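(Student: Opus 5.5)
The plan is to unwind the acceptance test~\eqref{eq:unified_test} and the update rule~\eqref{eq:unified_update}. Nothing about the oracles is needed beyond the definition of the absolute SDO error $\mathcal E_k$.

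First consider an unsuccessful iteration. The update rule sets $X_{k+1}=X_k$, so $f(X_k)-f(X_{k+1})=0$ trivially.

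Now consider a successful iteration $k$. The update rule gives $X_{k+1}=Y_k$, so we must lower bound the true decrease $f(X_k)-f(Y_k)$. The definition of $\mathcal E_k$ gives $\Delta_k-(f(X_k)-f(Y_k))\le \mathcal E_k$, that is,
\[
f(X_k)-f(Y_k)\ge \Delta_k-\mathcal E_k .
\]
Success means both parts of~\eqref{eq:unified_test} hold. The first part gives $\Delta_k\ge \frac{\sigma}{2}\Gamma_k\|D_k\|^2-\epsilon_f$. The second part gives $\|D_k\|\ge\varepsilon_{\mathrm{rej}}$, hence $\frac{\sigma}{2}\Gamma_k\|D_k\|^2\ge \frac{\sigma}{2}\Gamma_k\varepsilon_{\mathrm{rej}}^2$, since $\sigma,\Gamma_k>0$. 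Chaining these three inequalities yields~\eqref{eq:any_succ_progress}.

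There is no real obstacle here. The only point to note is that the bound must hold regardless of whether iteration $k$ is safe or true. That is why we use the a.s.\ identity for $\mathcal E_k$ rather than the event $\mathcal E_k\le\epsilon_f$, and why no property of the SPO output, such as $J_k$ or Lemma~\ref{lem:accept}, is invoked. Because the argument is pathwise, the inequality holds surely on every realization. We also note that $Y_k\in\dom f$ almost surely, so all function values involved are finite.
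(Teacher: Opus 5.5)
Your proposal is correct and matches the paper's proof. For unsuccessful iterations you use the update rule $X_{k+1}=X_k$; for successful ones you chain $f(X_k)-f(Y_k)\ge\Delta_k-\mathcal E_k$ with both parts of the acceptance test, exactly as the paper does.
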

\begin{proof}
If iteration $k$ is successful, then $X_{k+1}=Y_k$, so the acceptance test gives
\[
\Delta_k\ge \frac{\sigma}{2}\Gamma_k\|D_k\|^2-\epsilon_f,
\qquad
\|D_k\|\ge\varepsilon_{\mathrm{rej}},
\]
and the definition of $\mathcal E_k$ gives
\[
f(X_k)-f(Y_k)
\ge \Delta_k-\mathcal E_k
\ge \frac{\sigma}{2}\Gamma_k\varepsilon_{\mathrm{rej}}^2-
\epsilon_f-\mathcal E_k.
\]
If the iteration is unsuccessful, APS sets $X_{k+1}=X_k$.
\end{proof} 
 
Recall from \eqref{eq:h_des_def} that $
h_{\mathrm{des}} = 
\frac{\sigma\bar\gamma\varepsilon_{\mathrm{rej}}^2}{2\beta_{\mathrm{inc}}}.$
The proof below uses the log-scale distances \(\ell_k^+\) and \(\ell_k^-\) from~\eqref{eq:d0_def}. 

\begin{proposition}[Counting inequality]\label{prop:counting}
	Under Assumptions~\ref{ass:weak_convexity} and~\ref{ass:boundedness}, for any $t\le T_\varepsilon$,
	\begin{equation}\label{eq:counting}
	\sum_{k=0}^{t-1} I_k
	\le
	\frac{m}{h_{\mathrm{des}}}
	\left(
	Z_0+\sum_{k=0}^{t-1}\mathcal E_k
	\right)
	+
	\frac{1+m\epsilon_f/h_{\mathrm{des}}}{m+1}\,t  
	+
	\frac{
	m\ell_0^+
	+
	\left(1+m\epsilon_f/h_{\mathrm{des}}\right)\ell_0^-
	}{m+1}.
	\end{equation}
	\end{proposition}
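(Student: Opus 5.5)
The plan is to combine three ingredients: Lemma~\ref{lem:st_implies_s}, Lemma~\ref{lem:progress_bound}, and a log-scale potential argument like the one in the proof of Theorem~\ref{thm:exact_complexity}, now with increase steps of size $m$. Fix $t\le T_\varepsilon$ and split the iterations $k<t$ into four classes:
\begin{itemize}
\item safe successful, with count $N^{\rm safe}_+=\sum_{k<t}\Theta_k(1-U_k)$;
\item safe unsuccessful, with count $N^{\rm safe}_-$;
\item unsafe successful, with count $N^{\rm unsafe}_+$;
\item unsafe unsuccessful, with count $N^{\rm unsafe}_-$.
\end{itemize}
By Lemma~\ref{lem:st_implies_s}, a safe unsuccessful iteration cannot be true, so $I_k\le \Theta_k(1-U_k)+U_k$. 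This gives
\[
\sum_{k<t}I_k\le t-N^{\rm safe}_- .
\]
The rest of the proof bounds $t-N^{\rm safe}_-$ from above.

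\emph{Descent accounting.} Summing Lemma~\ref{lem:progress_bound} over successful iterations and using $f(X_t)\ge f_{\inf}$ gives
\[
\sum_{k<t}\Theta_k\Bigl(\tfrac{\sigma}{2}\Gamma_k\varepsilon_{\mathrm{rej}}^2-\epsilon_f\Bigr)\le Z_0+\sum_{k<t}\mathcal E_k .
\]
The two kinds of successful iteration are then bounded separately.
\begin{itemize}
\item On an unsafe successful iteration, $\Gamma_{k+1}=\beta_{\mathrm{inc}}\Gamma_k>\bar\gamma$, so $\Gamma_k>\bar\gamma/\beta_{\mathrm{inc}}$ and the bracket is at least $h_{\mathrm{des}}-\epsilon_f$.
\item On a safe successful iteration, the bracket is at least $-\epsilon_f$.
\end{itemize}
Together these give
\[
h_{\mathrm{des}}N^{\rm unsafe}_+\le Z_0+\sum_{k<t}\mathcal E_k+\epsilon_f\,(N^{\rm safe}_++N^{\rm unsafe}_+).
\]

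\emph{Log-scale dynamics.} Track $r_k=\ln(\Gamma_k/\bar\gamma)/\ln(1/\beta_{\mathrm{dec}})$, which moves by $+m$ on a success and by $-1$ on a failure. I would establish two counting facts.
\begin{itemize}
\item \emph{Unsafe side.} Following $\ell^+_k$ as in the deterministic case: each unsafe failure decreases $\ell^+$ by one, and $\ell^+$ can only be replenished by unsafe successes, each adding at most $m$. Hence
\[
N^{\rm unsafe}_-\le mN^{\rm unsafe}_++\ell_0^+ .
\]
\item \emph{Safe side.} Safe successes decrease $\ell^-$ by $m$, safe failures increase it by $1$, and $\ell^-\ge 0$. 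Hence
\[
mN^{\rm safe}_+\le N^{\rm safe}_-+\ell_0^- ,
\]
up to crossings of the threshold, which should be absorbed by the unsafe-side bookkeeping since a crossing upward is itself an unsafe successful iteration.
\end{itemize}

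\emph{Combining.} Use $t=N^{\rm safe}_++N^{\rm safe}_-+N^{\rm unsafe}_++N^{\rm unsafe}_-$. Form a weighted combination of the identity for $t$, the two counting inequalities, and the descent inequality with weight $m/h_{\mathrm{des}}$, choosing weights so that $t-N^{\rm safe}_-$ appears on the left. The coefficient $1/(m+1)$ in front of $t$ and $\ell_0^{\pm}$ should come from eliminating $N^{\rm safe}_+$ via $(m+1)N^{\rm safe}_+\le N^{\rm safe}_++N^{\rm safe}_-+\ell_0^-$. The $\epsilon_f$ terms, attached to $N^{\rm safe}_+$, then contribute $(m\epsilon_f/h_{\mathrm{des}})(t+\ell_0^-)/(m+1)$, which explains the factor $(1+m\epsilon_f/h_{\mathrm{des}})$ multiplying both $t$ and $\ell_0^-$. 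The unsafe term $N^{\rm unsafe}_++N^{\rm unsafe}_-\le(m+1)N^{\rm unsafe}_++\ell_0^+$ is handled with the descent bound. The constant coefficients should be checked against~\eqref{eq:counting}.

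\emph{Main obstacle.} I expect the main difficulty to be the exact case analysis at the safe/unsafe boundary. With steps of size $m$ an iteration can jump across $\bar\gamma$ by several units, and the ceiling/floor definitions of $\ell^\pm$ must make each transition contribute the right signed amount. I would first verify a per-iteration inequality of the form
\[
m\,\Delta\ell^+_k+\Delta\ell^-_k\cdot(\text{weight})\le(\text{class-dependent constant}),
\]
covering all four classes together with the two crossing cases, mirroring the bullet-point case check in Theorem~\ref{thm:exact_complexity}, and only then sum over $k$.
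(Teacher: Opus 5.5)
Your proposal is correct and follows essentially the same route as the paper: the reduction $\sum_k I_k\le\sum_k U_k+\sum_k\Theta_k(1-U_k)$ via Lemma~\ref{lem:st_implies_s}, the two potential inequalities from $\ell^\pm_k$, the same descent inequality, and a weighted combination with weight $m/h_{\mathrm{des}}$ on the descent bound and $(1+m\epsilon_f/h_{\mathrm{des}})/(m+1)$ on the safe-side bound. The two points you left open close cleanly. No crossing correction is needed, because an upward crossing sends $\ell^-$ to $0$ and a downward crossing (from $r_k\in(0,1]$) keeps it at $0$, so unsafe iterations never increase $\ell^-$. The remaining weight $m(1-\epsilon_f/h_{\mathrm{des}})/(m+1)$ on the unsafe-side bound is nonnegative only when $\epsilon_f\le h_{\mathrm{des}}$, which is why the paper first disposes of the case $\epsilon_f>h_{\mathrm{des}}$, where the right-hand side is already at least $t$.
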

	 
	\begin{proof}
		Note that if $\epsilon_f \geq h_{\mathrm{des}}$ then the RHS of \eqref{eq:counting} is larger than $t$, so the proposition trivially holds. Thus for the rest of the proof we assume that $\epsilon_f \leq h_{\mathrm{des}}$. 

		Lemma~\ref{lem:st_implies_s} gives $\sum_{k=0}^{t-1}I_k(1-U_k)\le\sum_{k=0}^{t-1}\Theta_k(1-U_k)$, hence
		\begin{equation}
			\label{eq:counting_true}
		\sum_{k=0}^{t-1}I_k
		\;\le\;
		\sum_{k=0}^{t-1}U_k
		+
		\sum_{k=0}^{t-1}\Theta_k(1-U_k)
		= \sum_{k=0}^{t-1}U_k + \sum_{k=0}^{t-1}\Theta_k
		- \sum_{k=0}^{t-1}U_k\Theta_k		\end{equation}
		The rest of the proof bounds the RHS of \eqref{eq:counting_true}.  To begin, define 
		\[
		r_k := \frac{\ln(\Gamma_k/\bar\gamma)}{\ln(1/\beta_{\mathrm{dec}})},
		\]
		so that a successful iteration has $r_{k+1}=r_k+m$ and an unsuccessful one has $r_{k+1}=r_k-1$. Define 
		\[
		\ell_k^+ := \max\{\lceil r_k\rceil,\,0\},
		\qquad
		\ell_k^- := \max\{\lfloor -r_k\rfloor,\,0\},
		\]
		which are both nonnegative integers. The changes in $\ell_k^+$ and $\ell_k^-$ across the four iteration types are summarized in \autoref{tab:reserve_changes} below; the reasoning is similar to the case analysis in the proof of \autoref{thm:exact_complexity}. 
		
		\begin{table}[ht] 
		\centering
		\begin{tabular}{|l|c|c|}
		\hline
		Iteration type & $\ell_{k+1}^+-\ell_k^+$ & $\ell_{k+1}^--\ell_k^-$ \\
		\hline
		Safe, successful   & $0$       & $-m$ \\
		Safe, unsuccessful & $0$       & $+1$ \\
		Unsafe, successful   & $\le +m$  & $\le 0$ \\
		Unsafe, unsuccessful & $-1$      & $0$ \\
		\hline
		\end{tabular}
		\caption{One-step changes of $\ell_k^+$ and $\ell_k^-$.}
		\label{tab:reserve_changes}
		\end{table}
		
		Adding up the changes in $\ell_k^+$ and $\ell_k^-$ over $k=0,\ldots,t-1$ using the bounds in Table~\ref{tab:reserve_changes},
		\[
		\begin{aligned}
		\ell_t^+
		&\le \ell_0^+ + m\sum_{k=0}^{t-1}U_k\Theta_k
		- \sum_{k=0}^{t-1}U_k(1-\Theta_k),\\
		\ell_t^-
		&\le \ell_0^- - m\sum_{k=0}^{t-1}\Theta_k(1-U_k)
		+ \sum_{k=0}^{t-1}(1-\Theta_k)(1-U_k).
		\end{aligned}
		\]
		Since $\ell_t^+\ge 0$, the first inequality implies
		\begin{equation}
			\label{eq:counting_1}
		\sum_{k=0}^{t-1}U_k  - (m+1)\sum_{k=0}^{t-1}U_k\Theta_k
		\;\le\;  \ell_0^+.
		\end{equation}
		Since $\ell_t^- \geq 0$, the second inequality implies 
		\begin{equation*}
		m\sum_{k=0}^{t-1}\Theta_k(1-U_k) - \sum_{k=0}^{t-1}(1-\Theta_k)(1-U_k)
		\;\le\;  \ell_0^-.
		\end{equation*}
		Because $\sum_{k=0}^{t-1}(1-\Theta_k)(1-U_k) = t - \sum_{k=0}^{t-1}U_k - \sum_{k=0}^{t-1}\Theta_k(1-U_k)$, the second inequality can be rewritten as
		\begin{equation}
			\label{eq:counting_2}
		(m+1)\sum_{k=0}^{t-1}\Theta_k(1-U_k) + \sum_{k=0}^{t-1}U_k 
		\;\le\;  t + \ell_0^-.
		\end{equation}
		Next, \autoref{lem:progress_bound} implies the following bound for any iteration $k$:
\[
f(X_k)-f(X_{k+1})
\ge
h_{\mathrm{des}}U_k\Theta_k-\Theta_k(\epsilon_f+\mathcal E_k).
\]
Telescoping and using $f(X_t)\ge f_{\inf}$, we get 
\[
h_{\mathrm{des}}\sum_{k=0}^{t-1}U_k\Theta_k
\le
Z_0+\sum_{k=0}^{t-1}\Theta_k(\epsilon_f+\mathcal E_k) \leq Z_0 + \sum_{k=0}^{t-1}\mathcal E_k + \epsilon_f \sum_{k=0}^{t-1}\Theta_k.
\]
	Thus
	\begin{equation}\label{eq:counting_3}
	h_{\mathrm{des}}\sum_{k=0}^{t-1}U_k\Theta_k-\epsilon_f \sum_{k=0}^{t-1}\Theta_k
	\le
	Z_0+\sum_{k=0}^{t-1}\mathcal E_k.
	\end{equation}
	We now combine~\eqref{eq:counting_1}, \eqref{eq:counting_2}, and \eqref{eq:counting_3} to bound the RHS of \eqref{eq:counting_true}. Since $\frac{\epsilon_f}{h_{\mathrm{des}}}\le1$, the coefficients
	\[
	\frac{m(1-\epsilon_f/h_{\mathrm{des}})}{m+1},\qquad
	\frac{1+m\epsilon_f/h_{\mathrm{des}}}{m+1}, \qquad 
	\frac{m}{h_{\mathrm{des}}}
	\] 
	are nonnegative. Multiplying
	\eqref{eq:counting_1}, \eqref{eq:counting_2}, and
	\eqref{eq:counting_3} by these three coefficients, respectively, and
	adding the resulting inequalities, we get
	\[
	\begin{aligned}
		\sum_{k=0}^{t-1}U_k + \sum_{k=0}^{t-1}\Theta_k - \sum_{k=0}^{t-1}U_k\Theta_k
		&\le
		\frac{m}{h_{\mathrm{des}}}\left(Z_0+\sum_{k=0}^{t-1}\mathcal E_k\right) \\
		&\quad
		+ \frac{m(1-\epsilon_f/h_{\mathrm{des}})}{m+1}\ell_0^+
		+ \frac{1+m\epsilon_f/h_{\mathrm{des}}}{m+1}(t+\ell_0^-).
	\end{aligned}
	\]
	Combining this with~\eqref{eq:counting_true} proves the proposition.
	\end{proof}

	Using this proposition, we can now prove the high-probability iteration complexity theorem.

	\begin{theorem}[High-probability iteration complexity]\label{thm:hp_main}
		Suppose Assumptions~\ref{ass:weak_convexity}, \ref{ass:boundedness}, and
		\ref{ass:true_iter_prob_hp} hold. Define
		\begin{equation}\label{eq:p0_R_def}
		p_0
		:=
		\frac{1}{m+1}
		+
		\frac{m(m+2)}{m+1}\frac{\epsilon_f}{h_{\mathrm{des}}},
		\qquad
		R
		:=
		\frac{mZ_0}{h_{\mathrm{des}}}
		+
		c_0,
		\end{equation}
		where $c_0 = \frac{m\ell_0^+ + (1+m\epsilon_f/h_{\mathrm{des}})\ell_0^-}{m+1}$. Suppose $p>p_0$, then for any $s\ge0$,  $
		\hat p\in
		\left(
		p_0+\frac{ms}{h_{\mathrm{des}}},\,p
		\right),$
		and any integer
		$
		t>
		\frac{R}{\hat p-p_0-ms/h_{\mathrm{des}}},
		$
		the stopping time satisfies
		\[
		\mathbb P(T_\varepsilon\le t)
		\ge
		1-
		\exp\!\left(-\frac{(p-\hat p)^2}{2}t\right)
		-
		\mathbb P\bigl(\overline{B_t(s)}\bigr),
		\]
		where
		$	
		B_t(s)
		:=
		\left\{
		\sum_{k=0}^{t-1}\mathcal E_k
		\le
		t(\epsilon_f+s)
		\right\}.
		$
		\end{theorem}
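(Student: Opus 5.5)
We argue by inclusion of events: we show that
\[
\{T_\varepsilon>t\}\cap B_t(s)\;\subseteq\;\Bigl\{\textstyle\sum_{k=0}^{t-1}I_k\le \hat p\, t\Bigr\}\cap\{T_\varepsilon>t\},
\]
and then bound the probability of the right-hand event by a martingale concentration inequality. Together these give $\mathbb P(T_\varepsilon>t)\le \exp(-(p-\hat p)^2t/2)+\mathbb P(\overline{B_t(s)})$, which is the claim.

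\emph{Deterministic step.} On $\{T_\varepsilon>t\}$ we have $t\le T_\varepsilon$, so Proposition~\ref{prop:counting} applies. On $B_t(s)$ we bound $\sum_{k<t}\mathcal E_k\le t(\epsilon_f+s)$ and substitute. The right-hand side of~\eqref{eq:counting} then becomes
\[
R+t\Bigl(\frac{m\epsilon_f}{h_{\mathrm{des}}}+\frac{ms}{h_{\mathrm{des}}}+\frac{1+m\epsilon_f/h_{\mathrm{des}}}{m+1}\Bigr).
\]
We check that the coefficient in parentheses equals $p_0+ms/h_{\mathrm{des}}$. Indeed,
\[
\frac{m\epsilon_f}{h_{\mathrm{des}}}+\frac{m\epsilon_f}{(m+1)h_{\mathrm{des}}}=\frac{m(m+2)}{m+1}\frac{\epsilon_f}{h_{\mathrm{des}}},
\]
and adding $1/(m+1)$ gives exactly $p_0$. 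Hence $\sum_{k<t}I_k\le R+t(p_0+ms/h_{\mathrm{des}})$. The hypothesis $t>R/(\hat p-p_0-ms/h_{\mathrm{des}})$ rearranges to $R+t(p_0+ms/h_{\mathrm{des}})<\hat p\,t$, which gives the inclusion. The case $\epsilon_f>h_{\mathrm{des}}$ needs no separate treatment, because the proposition already covers it.

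\emph{Probabilistic step.} We need $\mathbb P\bigl(\{T_\varepsilon>t\}\cap\{\sum_{k<t}I_k\le\hat p t\}\bigr)\le e^{-(p-\hat p)^2t/2}$. Assumption~\ref{ass:true_iter_prob_hp} guarantees $\mathbb P(I_k=1\mid\mathcal F_{k-1})\ge p$ only on $\{k<T_\varepsilon\}$, so we first modify the indicators. Define $\tilde I_k:=I_k$ on $\{k<T_\varepsilon\}$ and $\tilde I_k:=1$ otherwise. The event $\{k<T_\varepsilon\}=\{\|\nabla f_{\bar\gamma}(X_j)\|>\varepsilon,\ j\le k\}$ is $\mathcal F_{k-1}$-measurable, because each $X_j$ with $j\le k$ is. It follows that $\mathbb P(\tilde I_k=1\mid\mathcal F_{k-1})\ge p$ a.s. for every $k$, and $\tilde I_k$ is $\mathcal F_k$-measurable. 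Therefore
\[
M_n:=\sum_{k<n}\bigl(\tilde I_k-\mathbb P(\tilde I_k=1\mid\mathcal F_{k-1})\bigr)
\]
is a martingale with increments in $[-1,1]$, and more precisely each increment lies in an interval of length $1$. Azuma--Hoeffding, in the form for increments confined to a predictable interval of length $1$, gives
\[
\mathbb P\Bigl(\textstyle\sum_{k<t}\tilde I_k\le \hat p t\Bigr)\le \mathbb P\bigl(M_t\le-(p-\hat p)t\bigr)\le \exp\bigl(-2(p-\hat p)^2t\bigr)\le \exp\bigl(-(p-\hat p)^2t/2\bigr).
\]
On $\{T_\varepsilon>t\}$ we have $\tilde I_k=I_k$ for all $k<t$, so this bound transfers to the event we need.

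Combining the two steps, $\mathbb P(T_\varepsilon>t)\le \mathbb P\bigl(\{T_\varepsilon>t\}\cap\{\sum_{k<t}I_k\le\hat p t\}\bigr)+\mathbb P(\overline{B_t(s)})$, and taking complements yields the theorem.

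The main obstacle is the measurability bookkeeping in the probabilistic step. The lower bound on the conditional probability holds only before the stopping time, and the modification $\tilde I_k$ is valid only because $\{k<T_\varepsilon\}\in\mathcal F_{k-1}$. Once that is in place, the concentration bound is standard.
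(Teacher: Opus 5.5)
Your proposal is correct and follows essentially the paper's argument. The counting inequality, evaluated on $B_t(s)\cap\{T_\varepsilon>t\}$, forces $\sum_{k<t}I_k<\hat p\,t$, and Azuma--Hoeffding applied to the true-iteration indicators, localized via $\{k<T_\varepsilon\}\in\mathcal F_{k-1}$, bounds the probability of that event. The differences are cosmetic: you set $\tilde I_k=1$ after the stopping time and use the compensated martingale with a predictable range of length one, which gives the sharper $e^{-2(p-\hat p)^2t}$, whereas the paper uses the submartingale $\sum_{k<t}(I_k-p)\mathbf 1_{\{k<T_\varepsilon\}}$ with increments bounded by one in absolute value.
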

		
		\begin{proof}
		Let
		$
		A_t:=
		\left\{
		\sum_{k=0}^{t-1}I_k\ge \hat p t
		\right\}.
		$
		We have 
\begin{align*}
	\mathbb P(T_\varepsilon>t)
	&=
	\mathbb P(T_\varepsilon>t,\overline{A_t})+\mathbb P(T_\varepsilon>t, A_t)  \\
	&= 
	\mathbb P(T_\varepsilon>t,\overline{A_t})
	+ \mathbb P(T_\varepsilon>t, A_t, B_t(s))
	+ \mathbb P(T_\varepsilon>t, A_t, \overline{B_t(s)}) \\
	&\le
	\mathbb P(T_\varepsilon>t,\overline{A_t})
	+ \mathbb P(T_\varepsilon>t, A_t, B_t(s))
	+ \mathbb P\bigl(\overline{B_t(s)}\bigr).
\end{align*}
We first show that $\mathbb P(T_\varepsilon>t, A_t, B_t(s))=0$. On the event $\{T_\varepsilon>t, A_t, B_t(s)\}$, Proposition~\ref{prop:counting} gives
		\[
		\begin{aligned}
		\hat p t
		&\le
		\frac{m}{h_{\mathrm{des}}}
		\left(
		Z_0+\sum_{k=0}^{t-1}\mathcal E_k
		\right)
		+
		\frac{1+m\epsilon_f/h_{\mathrm{des}}}{m+1}t
		+
		\frac{
		m\ell_0^+
		+
		(1+m\epsilon_f/h_{\mathrm{des}})\ell_0^-
		}{m+1} \\
		&\le
		R
		+
		\left(
		\frac{1+m\epsilon_f/h_{\mathrm{des}}}{m+1}
		+
		\frac{m\epsilon_f}{h_{\mathrm{des}}}
		+
		\frac{ms}{h_{\mathrm{des}}}
		\right)t \\
		&=
		R+
		\left(
		p_0+\frac{ms}{h_{\mathrm{des}}}
		\right)t.
		\end{aligned}
		\]
		This contradicts the assumed lower bound on $t$. Hence $\mathbb P(T_\varepsilon>t, A_t, B_t(s))=0$, so
\[
\mathbb P(T_\varepsilon>t)
\le
\mathbb P(T_\varepsilon>t,\overline{A_t})+
\mathbb P\bigl(\overline{B_t(s)}\bigr).
\]
To bound the first probability, define
\[
M_t:=\sum_{k=0}^{t-1}(I_k-p)\mathbf 1_{\{k<T_\varepsilon\}}.
\]
Since $\mathbf 1_{\{k<T_\varepsilon\}}$ is $\mathcal F_{k-1}$-measurable and Assumption~\ref{ass:true_iter_prob_hp} gives $\mathbb E[I_k\mid\mathcal F_{k-1}]\ge p$ on $\{k<T_\varepsilon\}$, the process $(M_t)$ is a submartingale. Its increments are bounded by one in absolute value. On $\{T_\varepsilon>t,\overline{A_t}\}$,
\[
M_{t}
=
\sum_{k=0}^{t-1}(I_k-p)
< -(p-\hat p)t.
\]
Azuma--Hoeffding~\cite{Azuma1967} applied to the supermartingale $-M_t$ therefore yields
\[
\mathbb P(T_\varepsilon>t,\overline{A_t})
\le
\exp\!\left(-\frac{(p-\hat p)^2}{2}t\right).
\]
This proves the theorem.
		\end{proof}

\begin{remark}
Let's try to understand the condition $p>p_0$. First consider the noiseless-difference case $\epsilon_f=0$. In this case, the threshold reduces to
$
p>\frac{1}{m+1}.
$ When $\beta_{\mathrm{inc}}=\frac{1}{\beta_{\mathrm{dec}}}$, $m=1$, this condition is just $p>1/2$.
Since $m=\ln\beta_{\mathrm{inc}}/\ln(1/\beta_{\mathrm{dec}})$, the condition is equivalent to
\[
p\ln\beta_{\mathrm{inc}}-(1-p)\ln(1/\beta_{\mathrm{dec}})>0.
\]
Together with the fact that safe and true iterations are successful iterations, this implies that the proximal parameter has positive drift while the method is in the safe regime. This is a natural requirement for the algorithm to make progress, since it prevents the adaptive updates from being dominated by decreases, and allows the method to move toward a sufficiently large proximal parameter even when the initial proximal parameter is too small. 

The second term in $p_0$ is
$
\frac{m(m+2)}{m+1}\frac{\epsilon_f}{h_{\mathrm{des}}}.
$
This term compensates for the bias in the noisy function-difference estimates. The quantity $\epsilon_f/h_{\mathrm{des}}$ is the bias-to-progress ratio. This shows that given a fixed noise level $\epsilon_f$, the algorithm has a natural lower bound on the target accuracy $\varepsilon$ it can achieve, since $h_{\mathrm{des}}=\Theta(\varepsilon^2)$.
\end{remark}

The next corollary provides two standard ways to control the probability of the accumulated SDO error event $\overline{B_t(s)}$: a finite-moment model, which allows for heavy-tailed error distributions, and a sub-exponential distribution special case.

\begin{corollary}[SDO tail bounds]\label{cor:sdo}
Suppose Assumptions~\ref{ass:weak_convexity}, \ref{ass:boundedness}, and~\ref{ass:true_iter_prob_hp} hold, and $p>p_0$. Let
\[
\bar{\mathcal E}_k:=\mathbb E[\mathcal E_k\mid\mathcal G_k],
\qquad
W_k:=\mathcal E_k-\bar{\mathcal E}_k.
\]
Fix $s>0$ such that $p_0+ms/h_{\mathrm{des}}<p$, choose any $\hat p\in(p_0+ms/h_{\mathrm{des}},p)$, and let $t$ be any integer satisfying the lower bound in Theorem~\ref{thm:hp_main}.

\begin{enumerate}
\item If, for some $q\ge 2$ and $\zeta_q<\infty$, the SDO satisfies the $\mathcal G_k$-conditional moment bound
\begin{equation}\label{ass:sdo_conditional}
\bar{\mathcal E}_k \le \epsilon_f
\qquad\text{and}\qquad
\mathbb E[|W_k|^q\mid \mathcal G_k]\le \zeta_q
\qquad\text{almost surely,}
\end{equation}
then there exist constants $c_2>0$ and $C_q>0$, depending only on $q$, such that
\[
\mathbb P(T_\varepsilon>t)
\le
\exp\!\left(-\frac{(p-\hat p)^2}{2}t\right)
+
\exp\!\left(-\frac{c_2s^2}{\zeta_q^{2/q}}t\right)
+
\frac{C_q\zeta_q}{s^q t^{q-1}}.
\]

\item If, for some $\tau^2>0$ and $b>0$, the SDO satisfies the $\mathcal G_k$-conditional sub-exponential bound
\begin{equation}\label{ass:sdo_subexp}
\bar{\mathcal E}_k \le \epsilon_f,
\qquad
\mathbb E\!\left[\exp(\lambda W_k)\middle|\mathcal G_k\right]
\le
\exp(\lambda^2\tau^2/2)
\quad\text{for all } |\lambda|\le 1/b
\end{equation}
almost surely, then
\[
\mathbb P(T_\varepsilon>t)
\le
\exp\!\left(-\frac{(p-\hat p)^2}{2}t\right)
+
\exp\!\left(-\min\left\{\frac{s^2}{2\tau^2},\frac{s}{2b}\right\}t\right).
\]
\end{enumerate}
\end{corollary}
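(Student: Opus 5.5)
The plan is to reduce both parts to bounding $\mathbb P(\overline{B_t(s)})$ and then invoke Theorem~\ref{thm:hp_main}. That theorem already gives
\[
\mathbb P(T_\varepsilon>t)\le \exp(-(p-\hat p)^2t/2)+\mathbb P(\overline{B_t(s)}).
\]
Since $\bar{\mathcal E}_k\le\epsilon_f$ almost surely, we have $\sum_{k<t}\mathcal E_k\le t\epsilon_f+\sum_{k<t}W_k$. Hence
\[
\overline{B_t(s)}\subseteq\Bigl\{\textstyle\sum_{k=0}^{t-1}W_k> ts\Bigr\}.
\]
The key structural fact is that $W_k$ is a martingale difference sequence with respect to the interleaved filtration $\mathcal G_0\subseteq\mathcal F_0\subseteq\mathcal G_1\subseteq\cdots$. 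Indeed, $W_k$ is $\mathcal F_k$-measurable, $\mathcal F_k\subseteq\mathcal G_{k+1}$, and $\mathbb E[W_k\mid\mathcal G_k]=0$. So $S_j:=\sum_{k<j}W_k$ is a martingale adapted to $(\mathcal G_{j})$, with increments controlled conditionally on $\mathcal G_k$.

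For part 2, I will use the standard Chernoff/Bernstein argument for martingales with conditionally sub-exponential increments. By the tower property and the conditional MGF bound, applied iteratively from $k=t-1$ down to $0$,
\[
\mathbb E[e^{\lambda S_t}]\le e^{t\lambda^2\tau^2/2}\qquad\text{for }0\le\lambda\le1/b .
\]
Markov's inequality then gives $\mathbb P(S_t>ts)\le\exp(-\lambda ts+t\lambda^2\tau^2/2)$. Take $\lambda=\min\{s/\tau^2,1/b\}$. In the first case the exponent is $-ts^2/(2\tau^2)$. In the second case $s/\tau^2\ge 1/b$ implies $\tau^2/b\le s$, so the exponent is at most $-ts/b+ts/(2b)=-ts/(2b)$. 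Together this yields $\exp(-\min\{s^2/(2\tau^2),s/(2b)\}t)$.

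For part 1 (heavy tails), I will apply a Fuk--Nagaev-type inequality for martingales with bounded conditional $q$-th moments, for example Fuk's martingale inequality or its truncation-based proof. Concretely, I will truncate $W_k$ at level $ts$ (or a constant multiple of it). The probability that any increment exceeds the level is at most
\[
\sum_k\mathbb E|W_k|^q/(ts)^q\le t\zeta_q/(ts)^q=\zeta_q/(s^qt^{q-1}),
\]
by a union bound and conditional Markov. The truncated, recentered increments are bounded with conditional variance at most $\zeta_q^{2/q}$ by Jensen, since $\mathbb E[W_k^2\mid\mathcal G_k]\le(\mathbb E[|W_k|^q\mid\mathcal G_k])^{2/q}$. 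Freedman's inequality then produces a term of the form $\exp(-c s^2 t/\zeta_q^{2/q})$.

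The main obstacle is part 1, specifically bookkeeping in the truncation. The recentering bias from truncation must be shown to be at most a constant fraction of $ts$, using $\mathbb E[|W_k|\mathbf 1\{|W_k|>a\}\mid\mathcal G_k]\le\zeta_q/a^{q-1}$. The bounded-increment part of Freedman's exponent, of order $\exp(-ts/a)$ with $a\sim ts$, is only a constant, so it must be absorbed. The standard remedy is to truncate at $a=ts/r$ for a $q$-dependent $r$, which iterates Fuk--Nagaev and yields the polynomial term $C_q\zeta_q/(s^qt^{q-1})$. This is where $c_2$ and $C_q$ depending only on $q$ come from; I will cite the martingale Fuk--Nagaev inequality rather than re-derive it.
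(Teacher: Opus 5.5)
Your proposal is correct and follows essentially the same route as the paper. Both arguments reduce to bounding $\mathbb P(\overline{B_t(s)})$ via Theorem~\ref{thm:hp_main}, use $\bar{\mathcal E}_k\le\epsilon_f$ to get $\overline{B_t(s)}\subseteq\{\sum_{k<t}W_k>ts\}$, note that $(W_k)$ is a martingale difference sequence, and then invoke the martingale Fuk--Nagaev inequality for the heavy-tailed case and a Bernstein-type bound for the sub-exponential case. The differences are cosmetic: you work directly with the interleaved filtration $(\mathcal G_k)$ and carry out the Chernoff optimization explicitly, whereas the paper pushes the conditional bounds down to $\mathcal F_{k-1}$ via the tower property and cites the martingale Bernstein inequality.
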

\begin{proof}
	Since $\mathcal F_{k-1}\subseteq\mathcal G_k\subseteq\mathcal F_k$ and
	$\mathbb E[W_k\mid\mathcal G_k]=0$, the tower property gives
	$\mathbb E[W_k\mid\mathcal F_{k-1}]=0$. Thus $(W_k)$ is an
	$(\mathcal F_k)$-martingale difference sequence. Moreover, in both cases
	$\bar{\mathcal E}_k\le \epsilon_f$, and hence
	$
	\overline{B_t(s)}
	\subseteq
	\left\{
	\sum_{k=0}^{t-1} W_k>st
	\right\}.
	$
	
	In the heavy-tailed case, the tower property and Jensen's inequality give
	$
	\mathbb E[|W_k|^q\mid\mathcal F_{k-1}]\le \zeta_q,
	\text{ and }
	\mathbb E[W_k^2\mid\mathcal F_{k-1}]
	\le \zeta_q^{2/q}.
	$
	Therefore the martingale Fuk--Nagaev inequality~\cite{FukNagaev1971,FanGramaLiu2015} yields
	\[
	\mathbb P(\overline{B_t(s)})
	\le
	\exp\!\left(-\frac{c_2s^2}{\zeta_q^{2/q}}t\right)
	+
	\frac{C_q\zeta_q}{s^q t^{q-1}}.
	\]
	In the sub-exponential case, the tower property gives
$
	\mathbb E\!\left[\exp(\lambda W_k)\middle|\mathcal F_{k-1}\right]
	\le
	\exp(\lambda^2\tau^2/2), \text{ for all }
	 |\lambda|\le 1/b,
	$
	so the martingale Bernstein inequality gives
	\[
	\mathbb P(\overline{B_t(s)})
	\le
	\exp\!\left(
	-\min\left\{\frac{s^2}{2\tau^2},\frac{s}{2b}\right\}t
	\right).
	\]
	Substituting the corresponding bound on $\mathbb P(\overline{B_t(s)})$
	into Theorem~\ref{thm:hp_main} proves the two estimates.
	\end{proof}

Corollary~\ref{cor:sdo} shows that when the SDO error follows a heavy-tailed distribution, the tail probability of the stopping time decays polynomially in $t$. In the special case of a sub-exponential error distribution, this decay is even faster, dropping exponentially in $t$. The following corollary shows that the stopping time is almost surely finite in both cases.
 
\begin{corollary}[Almost sure finite hitting time]\label{cor:as}
Under the conditions of Corollary~\ref{cor:sdo}, $\mathbb P(T_\varepsilon<\infty)=1$ in both cases.
\end{corollary}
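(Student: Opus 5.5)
The plan is to fix the parameters once and let $t\to\infty$ in the bounds of Corollary~\ref{cor:sdo}. Since $p>p_0$, choose $s>0$ small enough that $p_0+ms/h_{\mathrm{des}}<p$, and fix any $\hat p\in(p_0+ms/h_{\mathrm{des}},p)$. Neither choice depends on $t$. Every integer $t$ larger than $R/(\hat p-p_0-ms/h_{\mathrm{des}})$ then satisfies the lower bound required in Theorem~\ref{thm:hp_main}, so Corollary~\ref{cor:sdo} applies to all such $t$ with the same constants.

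Next, use monotonicity of events. We have $\{T_\varepsilon=\infty\}\subseteq\{T_\varepsilon>t\}$ for every $t$, so $\mathbb P(T_\varepsilon=\infty)\le \mathbb P(T_\varepsilon>t)$ for all sufficiently large integers $t$. In the heavy-tailed case, Corollary~\ref{cor:sdo} bounds this by
\[
\exp\!\left(-\tfrac{(p-\hat p)^2}{2}t\right)+\exp\!\left(-\tfrac{c_2s^2}{\zeta_q^{2/q}}t\right)+\frac{C_q\zeta_q}{s^qt^{q-1}}.
\]
All three terms tend to $0$ as $t\to\infty$. For the first two this holds because $p-\hat p>0$ and $s>0$. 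For the third it holds because $q\ge 2$ gives $t^{q-1}\ge t\to\infty$. If $\zeta_q=0$, the second and third terms are simply absent or zero; one can also replace $\zeta_q$ by any positive upper bound, so no division issue arises. In the sub-exponential case, both terms are exponentials with strictly positive rates, since $\min\{s^2/(2\tau^2),s/(2b)\}>0$. Letting $t\to\infty$ therefore gives $\mathbb P(T_\varepsilon=\infty)=0$ in both cases.

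There is no real obstacle here. The only points to check are that $s$ and $\hat p$ can be chosen independently of $t$, which holds because the admissible interval is nonempty when $p>p_0$, and that the polynomial term actually decays. That decay needs $q>1$, which the assumption $q\ge2$ guarantees. Borel--Cantelli is not needed, because continuity from above of the decreasing events $\{T_\varepsilon>t\}$ already suffices.
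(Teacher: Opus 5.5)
Your proposal is correct and follows the same argument as the paper: you fix $s$ and $\hat p$ independently of $t$, note that the tail bounds of Corollary~\ref{cor:sdo} vanish as $t\to\infty$, and conclude $\mathbb P(T_\varepsilon=\infty)=0$ from $\mathbb P(T_\varepsilon=\infty)\le\mathbb P(T_\varepsilon>t)$ (the paper writes this as a limit via continuity from above). Your extra checks are the details the paper leaves implicit: the window for $s,\hat p$ is nonempty when $p>p_0$, and $q\ge 2$ makes the term $C_q\zeta_q/(s^qt^{q-1})$ decay.
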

\begin{proof}
The tail bound in Corollary~\ref{cor:sdo} tends to zero as $t\to\infty$, so
\[
\mathbb P(T_\varepsilon=\infty)
=
\lim_{t\to\infty}\mathbb P(T_\varepsilon>t)=0.
\]
\end{proof}

\section{Model-Based Extension of APS}\label{sec:model_oracles} 

The deterministic and stochastic analyses of Sections~\ref{sec:det_specialization}--\ref{sec:inexact_backtracking_proximal_point} use two key properties of the candidate oracle. First, in the safe regime, a candidate $y_k$ (with some probability) produces enough descent to pass the test~\eqref{eq:unified_test}. Second, in the safe regime, $d_k=(x_k-y_k)/\gamma_k$ is related to a stationarity measure of $f$: a small $\|d_k\|$ in a true iteration implies near-stationarity. 

This section shows that both properties are shared by an entire \emph{family} of candidate oracles, each obtained by minimizing a local model of $f$ plus a quadratic penalty. By changing the choice of local model, we obtain different algorithms: The proximal point algorithm is the case where the model is $f$ itself, and other choices of the model yield the prox-linear and proximal-gradient methods. This model-based view is based on \cite{DuchiRuan2018,DDStochModelBased}. The APS framework covers all such model-based algorithms, and the same analysis gives the same $\mathcal O(\varepsilon^{-2})$ iteration complexity for each. The rest of the section is organized as follows: we define the two-sided model and three specific examples, introduce the deterministic and stochastic model-based oracles built on it, and establish the resulting deterministic and stochastic high-probability complexity guarantees.  

\subsection{Two-Sided Models and Three Prox-Based Instances} 

\begin{definition}[Two-sided model]\label{def:model}
	Let \(x\in\dom f\) and let \(\tau,\eta\ge0\). A proper, lower semicontinuous function $m_x:\R^d\to\R\cup\{+\infty\}$ is a \emph{two-sided model of $f$ at $x$ with parameters $(\tau,\eta)$} if $m_x(x)=f(x)$, $\dom m_x=\dom f$, with:
\begin{align} \label{eq:two_sided_model}
  -\tfrac\tau2\|z-x\|^2\ \le\ f(z)-m_x(z)\ \le\ \tfrac\tau2\|z-x\|^2,\qquad z\in\dom f,
\end{align}
and $m_x$ is $\eta$-weakly convex with $\eta\le\tau+\rho$. 

We call $\gamma\le\bar\gamma_\tau:=\frac1{2(\tau+\rho)}$ the \emph{safe regime} (the range on which the model-based step could provably decrease $f$ and a small residual could imply near-stationarity (\autoref{lem:model_props})). The associated model subproblem is
\begin{equation}\label{eq:model_step}
  y^*=\argmin_y\left\{m_x(y)+\tfrac1{2\gamma}\|y-x\|^2\right\}.
\end{equation}
\end{definition}

Given such a model, the APS scheme of Algorithm~\ref{alg:aps} is unchanged: the only difference is that the candidate oracle now (approximately) solves the model subproblem~\eqref{eq:model_step} in place of the proximal subproblem. 

The power of the framework is that a single choice---the local model $m_x$---specializes \autoref{alg:aps} to a range of different methods, including the proximal-point, prox-linear, and proximal-gradient methods. We provide these three examples below. 

\paragraph{Proximal point ($\tau=0,\ \eta=\rho$).} Here $m_x=f$, so the optimal solution of the subproblem is $y^*=\prox_{\gamma f}(x)$ and the safe regime is $\gamma\le\bar\gamma=1/(2\rho)$. 

\paragraph{Prox-linear ($\tau=\rho,\ \eta=0$).} Suppose $f=h\circ c$ with $h$ convex and $L_h$-Lipschitz and $c$ of class $C^1$ with $L_c$-Lipschitz Jacobian, so that $\rho=L_hL_c$. Linearizing the inner map $c$ yields the model $m_x(z)=h\bigl(c(x)+\nabla c(x)(z-x)\bigr)$, which is convex and satisfies the model condition~\eqref{eq:two_sided_model} with $\tau=\rho$. In this case, the subproblem~\eqref{eq:model_step} is convex, and typically far easier to solve than the proximal subproblem. Sometimes it is solvable in closed form or in a few cheap iterations. The safe regime is $\gamma\le\bar\gamma_\rho=1/(4\rho)$. Two familiar cases are max-of-smooth, with $h(y) = \max(y_1, \ldots, y_m)$ and $c=(f_1,\dots,f_m)$, where $f_i$ are smooth functions, and {$\ell_1$-composite}, with $h=\|\cdot\|_1$ and $c$ a smooth residual map, so that $f(x)=\|c(x)\|_1$. The latter is the robust nonlinear-regression structure behind the phase-retrieval objective of \autoref{sec:numerics}, where $c_i(x)=(a_i^\top x)^2-b_i$. 

\paragraph{Proximal gradient ($\tau=L_g,\ \eta=0$).} For an additive composite $f=g+h$ with $g$ smooth ($L_g$-Lipschitz gradient, possibly nonconvex) and $h$ convex and prox-friendly, linearizing the smooth part yields the convex model $m_x(z)=g(x)+\langle\nabla g(x),z-x\rangle+h(z)$, which is convex and satisfies the model condition~\eqref{eq:two_sided_model} with $\tau=L_g$. 
Solving the subproblem~\eqref{eq:model_step} reduces to the proximal-gradient step $y^*=\prox_{\gamma h}\bigl(x-\gamma\nabla g(x)\bigr)$, and the safe regime is $\gamma\le\bar\gamma_{L_g}=1/(2(L_g+\rho))$. Familiar special cases include $h=0$ (gradient descent), $h=\iota_C$ for a closed convex set $C$ (projected gradient, $y^*=P_C(x-\gamma\nabla g(x))$), and $h=\|\cdot\|_1$ (soft-thresholding, an ISTA-type step); a difference of convex $f=q-s$ with $s$ smooth gives the proximal DC algorithm, with $g=-s$, so the concave part $-s$ is linearized.

These instances trade generality for per-step cost. The proximal point makes no structural assumption on $f$, but its subproblem, the prox of $f$ itself, can be harder to solve. Prox-linear and proximal-gradient instead exploit structure ($f=h\circ c$ or $f=g+h$) so that the model subproblem is convex and often has a closed-form solution, trading some generality for a far cheaper step. 
\autoref{alg:aps} captures all of them in a single framework, so one may use whichever oracle the problem at hand affords. Table~\ref{tab:instances} summarizes the corresponding examples.

\begin{table}[htbp]
    \centering
    \caption{Examples covered by the model-based APS framework.}
    \label{tab:instances}
    \begin{tabular}{@{}L{2.9cm}L{2.5cm}L{5.1cm}@{}}
    \hline
    instance & linearizes & representative examples \\
    \hline
    proximal point & nothing &  proximal point method \\[2pt]
    proximal-gradient $(g{+}h)$ & smooth $g$ & gradient descent; projected gradient; ISTA-type steps; proximal DC steps \\[2pt]
    prox-linear $(h\circ c)$ & inner map $c$ & finite-max problems; robust nonlinear regression; generalized Gauss--Newton steps \\
    \hline
    \end{tabular}
\end{table}

In this general model-based setting, the parameters $\rho$, $\eta$, and the model accuracy parameter $\tau$ determine both the strong convexity of the model subproblem and the safe threshold $\bar\gamma_\tau$ on which the model step will be shown to decrease $f$. The corresponding thresholds are:
\begin{center}
\begin{tabular}{@{}L{3.4cm}L{2.4cm}L{3.0cm}@{}}
\hline
 & $(\tau,\eta)$ & safe $\bar\gamma_\tau$ \\
\hline
proximal point & $(0,\rho)$ & $\tfrac1{2\rho}$ \\[2pt]
prox-linear $(h\circ c)$ & $(\rho,0)$ & $\tfrac1{4\rho}$ \\[2pt]
prox-gradient $(g+h)$ & $(L_g,0)$ & $\tfrac1{2(L_g+\rho)}$ \\[2pt]
general model $m_x$ & $(\tau,\eta)$ & $\tfrac1{2(\tau+\rho)}$ \\
\hline
\end{tabular}
\end{center}

\subsection{{Deterministic Model-Based Setting}}

\begin{definition}[Deterministic model-based oracle]\label{def:det_oracle}
Given a point $x$ and a parameter $\gamma$, the \emph{deterministic model-based oracle} returns, in the safe regime $\gamma\le\bar\gamma_\tau$, the minimizer $y^*$ of the model subproblem~\eqref{eq:model_step}; when $\gamma>\bar\gamma_\tau$ no accuracy is required of the output and the output need only lie in $\dom f$.
\end{definition}
This is the model-based analogue of the safe-exact oracle in \autoref{sec:det_specialization}, with the model $m_x$ in place of $f$ itself. The next lemma shows that the two important properties needed by the analysis hold in the safe regime: sufficient descent and a stationarity bound. 

\begin{lemma}[Deterministic descent and stationarity bounds]\label{lem:model_props}
Let $m_x$ be a two-sided model with parameters $(\tau,\eta)$, $\gamma\le\bar\gamma_\tau$, $x\in\dom f$, and $y^*$ minimize the model subproblem~\eqref{eq:model_step}, with $d=(x-y^*)/\gamma$. Then
\[
    f(x)-f(y^*)\ \ge\ \tfrac{2-(\tau+\eta)\gamma}{2}\,\gamma\|d\|^2\ \ge\ \tfrac\gamma2\|d\|^2,
    \qquad
    \|\nabla f_\gamma(x)\|\ \le\ 3\|d\|.
\]
\end{lemma}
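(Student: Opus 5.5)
The plan is to prove the two claims separately. Both rest on the strong convexity of the model subproblem, combined with the two-sided bound~\eqref{eq:two_sided_model}. Set $\psi(y):=m_x(y)+\frac1{2\gamma}\|y-x\|^2$ and $r:=\|x-y^*\|=\gamma\|d\|$. Since $m_x$ is $\eta$-weakly convex, $\psi$ is $(1/\gamma-\eta)$-strongly convex, and this modulus is positive. Indeed, $\gamma\le\bar\gamma_\tau=\frac1{2(\tau+\rho)}$ and $\eta\le\tau+\rho$ give $\eta\gamma\le 1/2$.

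\emph{Descent.} I will mimic the proof of Lemma~\ref{lem:accept}, with $m_x$ in place of $f$. Strong convexity of $\psi$ around its minimizer $y^*$, evaluated at $x$, together with $m_x(x)=f(x)$, gives
\[
f(x)\ \ge\ m_x(y^*)+\tfrac1{2\gamma}r^2+\tfrac{1/\gamma-\eta}{2}r^2 .
\]
The upper half of~\eqref{eq:two_sided_model} gives $f(y^*)\le m_x(y^*)+\frac\tau2 r^2$. Combining the two, I expect
\[
f(x)-f(y^*)\ \ge\ \Bigl(\tfrac1\gamma-\tfrac{\tau+\eta}{2}\Bigr)r^2=\tfrac{2-(\tau+\eta)\gamma}{2}\gamma\|d\|^2 .
\]
For the second inequality it suffices that $(\tau+\eta)\gamma\le1$. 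This follows from $\tau+\eta\le 2\tau+\rho\le 2(\tau+\rho)\le 1/\gamma$.

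\emph{Stationarity.} Let $\bar y=\prox_{\gamma f}(x)$ and $\Phi(y)=f(y)+\frac1{2\gamma}\|y-x\|^2$. Then $\Phi$ is $(1/\gamma-\rho)$-strongly convex, and $\nabla f_\gamma(x)=(x-\bar y)/\gamma$. The goal is to bound $a:=\|y^*-\bar y\|$ by a multiple of $r$. I will chain four inequalities:
\begin{align*}
\Phi(y^*)&\ \ge\ \Phi(\bar y)+\tfrac{1/\gamma-\rho}{2}a^2, && \text{(strong convexity of $\Phi$ at its minimizer)}\\
\Phi(y^*)&\ \le\ \psi(y^*)+\tfrac\tau2 r^2, && \text{(two-sided bound at $y^*$)}\\
\psi(y^*)&\ \le\ \psi(\bar y)-\tfrac{1/\gamma-\eta}{2}a^2, && \text{(strong convexity of $\psi$ at its minimizer)}\\
\psi(\bar y)&\ \le\ \Phi(\bar y)+\tfrac\tau2\|\bar y-x\|^2. && \text{(two-sided bound at $\bar y$)}
\end{align*}
Adding these and using $\|\bar y-x\|\le r+a$, I expect
\[
\bigl(\tfrac2\gamma-\rho-\eta\bigr)a^2\ \le\ \tau\bigl(r^2+(r+a)^2\bigr).
\]
The safe-regime bound gives $\frac2\gamma-\rho-\eta\ge 4(\tau+\rho)-\rho-(\tau+\rho)\ge 3\tau$. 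If $\tau=0$, the left side forces $a=0$. Otherwise I divide by $\tau$ to get $3a^2\le 2r^2+2ra+a^2$, that is, $a^2-ra-r^2\le0$. Hence $a\le\frac{1+\sqrt5}{2}r\le 2r$. Then $\|x-\bar y\|\le r+a\le 3r$, and dividing by $\gamma$ yields $\|\nabla f_\gamma(x)\|\le 3\|d\|$.

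The main obstacle is the stationarity step. The delicate point is the constant bookkeeping: the modulus $2/\gamma-\rho-\eta$ must dominate $\tau$ by a large enough factor. It is exactly here that the choice $\bar\gamma_\tau=1/(2(\tau+\rho))$ and the hypothesis $\eta\le\tau+\rho$ are used. I would double-check that the $\tau$-weighted term $\|\bar y-x\|^2$ is handled by the crude bound $(r+a)^2$ without losing too much to close the quadratic inequality with constant $3$.
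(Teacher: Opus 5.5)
Your proposal is correct and follows the paper's proof essentially step for step. The descent bound is derived identically, and the stationarity bound rests on the same four inequalities: strong convexity of both subproblems at their minimizers, plus the two-sided bound at $y^*$ and at $\prox_{\gamma f}(x)$. It also uses the same modulus estimate $\frac{2}{\gamma}-\rho-\eta\ge 3\tau+2\rho$.

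The only cosmetic difference is at the end. You substitute $\|\bar y-x\|\le r+\|y^*-\bar y\|$ before solving a quadratic in $\|y^*-\bar y\|/r$, and you treat $\tau=0$ separately. The paper instead keeps the symmetric bound $\frac13\bigl(\|y^*-x\|^2+\|p-x\|^2\bigr)$ and applies the reverse triangle inequality to obtain a quadratic in $\|\nabla f_\gamma(x)\|/\|d\|$. Both routes give the same constant $\frac{3+\sqrt5}{2}<3$.
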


\begin{proof}
Write $\Phi:=f+\tfrac1{2\gamma}\|\cdot-x\|^2$ and $M:=m_x+\tfrac1{2\gamma}\|\cdot-x\|^2$. On the safe regime both are strongly convex: $\Phi$ with parameter $\tfrac1\gamma-\rho>0$ and $M$ with parameter $\tfrac1\gamma-\eta>0$. So, each has a unique minimizer: $p:=\prox_{\gamma f}(x)$ for $\Phi$, and $y^*$ for $M$. In particular $\nabla f_\gamma(x)=(x-p)/\gamma$.

Since $M$ is $(\tfrac1\gamma-\eta)$-strongly convex on the safe regime with minimizer $y^*$, we have $M(x)\ge M(y^*)+\tfrac12(\tfrac1\gamma-\eta)\|x-y^*\|^2$. Combining $M(x)=m_x(x)=f(x)$ with the upper model bound $f(y^*)\le m_x(y^*)+\tfrac\tau2\|y^*-x\|^2=M(y^*)-\bigl(\tfrac1{2\gamma}-\tfrac\tau2\bigr)\|y^*-x\|^2$, we have 
\[
\begin{aligned}
  f(x)-f(y^*)
  &\ \ge\ \bigl[M(x)-M(y^*)\bigr]+\bigl(\tfrac1{2\gamma}-\tfrac\tau2\bigr)\|y^*-x\|^2\\
  &\ \ge\ \Bigl(\tfrac1\gamma-\tfrac{\tau+\eta}2\Bigr)\|y^*-x\|^2
  \ =\ \tfrac{2-(\tau+\eta)\gamma}{2}\gamma\|d\|^2\ \ge\ \tfrac\gamma2\|d\|^2.
\end{aligned}
\]
The last step uses $(\tau+\eta)\gamma\le1$ on the safe regime (since $\eta\le\tau+\rho$ and $\gamma\le\tfrac1{2(\tau+\rho)}$).

A $\mu$-strongly convex function exceeds its minimum value by at least $\tfrac\mu2\|\cdot-z^*\|^2$, where $z^*$ is its minimizer; applied to $\Phi$ at $p$ and to $M$ at $y^*$ this gives
\[
  \Phi(y^*)-\Phi(p)\ \ge\ \tfrac12\bigl(\tfrac1\gamma-\rho\bigr)\|y^*-p\|^2,
  \qquad
  M(p)-M(y^*)\ \ge\ \tfrac12\bigl(\tfrac1\gamma-\eta\bigr)\|y^*-p\|^2.
\]
Adding them, we obtain:
\begin{align*}
	\tfrac12\bigl(\tfrac2\gamma-\rho-\eta\bigr)\|y^*-p\|^2\ &\le [f(y^*)-m_x(y^*)]+[m_x(p)-f(p)]\\
	&\le \tfrac\tau2\bigl(\|y^*-x\|^2+\|p-x\|^2\bigr)
\end{align*}
Since $\eta\le\tau+\rho$ and $\gamma\le 1/(2(\tau+\rho))$, we have $\tfrac2\gamma-\rho-\eta\ge 3\tau+2\rho$. Hence
\begin{equation}
  \label{eq:det_model_lem_pf}
  \|y^*-p\|^2\le
  \frac{\tau}{3\tau+2\rho}\bigl(\|y^*-x\|^2+\|p-x\|^2\bigr)
  \le \tfrac13\bigl(\|y^*-x\|^2+\|p-x\|^2\bigr).
\end{equation}
Set $a:=\|\nabla f_\gamma(x)\|=\|p-x\|/\gamma$ and $b:=\|d\|=\|y^*-x\|/\gamma$. Using the reverse triangle inequality, we have
$$\abs{a-b} = \frac{1}{\gamma}\big|\norm{p-x} - \norm{y^* - x}\big| \leq \frac{1}{\gamma}\norm{(p-x) - (y^* - x)} = \frac{1}{\gamma}\norm{y^*-p}.$$
Plugging this back into \eqref{eq:det_model_lem_pf}, we get $(a-b)^2 \leq \frac13 (a^2 + b^2)$, i.e. $a^2 - 3ab + b^2 \leq 0$. This implies $a\le\frac{3+\sqrt{5}}{2}\,b<3b$. 
\end{proof}
  
\autoref{lem:model_props} immediately gives the $\mathcal{O}(\varepsilon^{-2})$ iteration complexity for model-based APS in the deterministic setting. 

\begin{theorem}[Deterministic model-based APS]\label{thm:det_model}
Let the candidate oracle be the deterministic model-based oracle (\autoref{def:det_oracle}) for a two-sided model with parameters $(\tau,\eta)$, and let function differences be exact. Run \autoref{alg:aps} with {$\sigma\in(0,1)$}, $\epsilon_f=0$, $\beta_{\mathrm{inc}}=1/\beta_{\mathrm{dec}}$, and $\varepsilon_{\mathrm{rej}}={\varepsilon/6}$. Then \autoref{alg:aps} produces, within
\[
  \mathcal O\bigl(\ell_{0,\tau}+(\tau+\rho)(f(x_0)-f_{\inf})/\varepsilon^2\bigr)
\]
iterations, an iterate $x_{k_*}$ with $\gamma_{k_*}\le\bar\gamma_\tau$ that is $\varepsilon$-stationary: $\|\nabla f_{\bar\gamma}(x_{k_*})\|\le\varepsilon$. Here, $\ell_{0,\tau}$ is the log-scale distance from $\gamma_0$ to $\bar\gamma_\tau$ (the analogue of~\eqref{eq:d0_def} with $\bar\gamma$ replaced by $\bar\gamma_\tau$). For the proximal point oracle ($\tau=0$), the stronger guarantee $\dist(0,\partial f(y_{k_*}))\le\varepsilon$ of \autoref{thm:exact_complexity} holds.
\end{theorem}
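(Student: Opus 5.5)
The plan is to repeat the proof of Theorem~\ref{thm:exact_complexity} with $\bar\gamma$ replaced by $\bar\gamma_\tau$ throughout. Lemma~\ref{lem:model_props} takes over the roles of Lemmas~\ref{lem:safe_subgrad_cert} and~\ref{lem:accept}. Define the safe small-residual hitting time
\[
\tau_\varepsilon:=\inf\{k\ge0:\ \gamma_k\le\bar\gamma_\tau,\ \|d_k\|\le\varepsilon_{\mathrm{rej}}\},
\]
and let the safe/unsafe indicator $U_k$ and the log-distances $r_k,\ell_k$ be defined relative to $\bar\gamma_\tau$. Since $m=1$, the dynamics of $r_k$ are the same $\pm1$ walk as before.

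\textbf{Step 1: safe iterations before $\tau_\varepsilon$ succeed.} For $k<\tau_\varepsilon$ with $\gamma_k\le\bar\gamma_\tau$, the oracle returns the exact model minimizer, and $\|d_k\|>\varepsilon_{\mathrm{rej}}$ holds. Lemma~\ref{lem:model_props} gives $f(x_k)-f(y_k)\ge\frac{\gamma_k}{2}\|d_k\|^2\ge\frac{\sigma}{2}\gamma_k\|d_k\|^2$, so (AT) holds with $\epsilon_f=0$.

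\textbf{Step 2: unsafe successful iterations pay for themselves.} Such an iteration has $\gamma_k\ge\bar\gamma_\tau/\beta_{\mathrm{inc}}$, so the test forces a decrease of at least
\[
h_\tau:=\frac{\sigma\bar\gamma_\tau\varepsilon_{\mathrm{rej}}^2}{2\beta_{\mathrm{inc}}}=\Theta\!\left(\frac{\varepsilon^2}{\tau+\rho}\right).
\]
All other iterations do not increase $f$. The case analysis of $\ell_k$ is verbatim the same, giving $t\le 2N^{\rm unsafe}_++\ell_{0,\tau}\le \ell_{0,\tau}+2Z_0/h_\tau$ for every $t\le\tau_\varepsilon$. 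This yields the stated $\mathcal O(\ell_{0,\tau}+(\tau+\rho)Z_0/\varepsilon^2)$ bound on $\tau_\varepsilon$. As in Corollary~\ref{thm:det_offline}, $k_*$ can be taken to be $\tau_\varepsilon$, or the minimal-$\gamma$ index among those with small residual.

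\textbf{Step 3: convert the residual to Moreau stationarity at $\bar\gamma$.} This is where I expect the main obstacle, namely the mismatch of scales: $\gamma_{k_*}\le\bar\gamma_\tau\le\bar\gamma$, while Lemma~\ref{lem:model_props} controls $\|\nabla f_{\gamma_{k_*}}(x_{k_*})\|\le3\|d_{k_*}\|\le\varepsilon/2$. Lemma~\ref{cor:moreau_factor2} with $\gamma=\gamma_{k_*}\le\lambda=\bar\gamma$ then gives
\[
\|\nabla f_{\bar\gamma}(x_{k_*})\|\le2\|\nabla f_{\gamma_{k_*}}(x_{k_*})\|\le6\|d_{k_*}\|\le\varepsilon,
\]
which explains the choice $\varepsilon_{\mathrm{rej}}=\varepsilon/6$. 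I must make sure $\gamma_{k_*}$ is not so tiny that anything breaks, but Lemma~\ref{cor:moreau_factor2} holds for every $0<\gamma\le\lambda$, so no lower bound on $\gamma_{k_*}$ is needed.

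For $\tau=0$ the model is $f$ itself, and the result reduces to Theorem~\ref{thm:exact_complexity} via Lemma~\ref{lem:safe_subgrad_cert}, which gives the stronger subgradient certificate $\dist(0,\partial f(y_{k_*}))\le\|d_{k_*}\|$. Obtaining $\le\varepsilon$ there requires the rejection threshold $\varepsilon$ as in Theorem~\ref{thm:exact_complexity}, or simply notes that $\varepsilon/6\le\varepsilon$.
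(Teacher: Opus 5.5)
Your proposal is correct and follows essentially the same route as the paper's proof. In both, Lemma~\ref{lem:model_props} makes every safe iteration before the safe small-residual hitting time successful, and the counting argument of Theorem~\ref{thm:exact_complexity} is rerun relative to $\bar\gamma_\tau$; at $k_*$ the bound $\|\nabla f_{\gamma_{k_*}}(x_{k_*})\|\le 3\|d_{k_*}\|\le\varepsilon/2$ is then lifted to scale $\bar\gamma$ via Lemma~\ref{cor:moreau_factor2}, with the $\tau=0$ case handled by $d_{k_*}\in\partial f(y_{k_*})$ and $\varepsilon/6\le\varepsilon$.
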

\begin{proof}
By \autoref{lem:model_props}, every safe iteration with $\|d_k\|\ge\varepsilon_{\mathrm{rej}}$ produces sufficient descent {($f(x_k)-f(y_k)\ge\tfrac12\gamma_k\|d_k\|^2\ge\tfrac\sigma2\gamma_k\|d_k\|^2$ for any $\sigma\in(0,1)$)} and is therefore successful. The counting argument of \autoref{thm:exact_complexity} then applies as before, with the distance $\ell_0$ replaced by $\ell_{0,\tau}$. This bounds the first index $k_*$ with $\gamma_{k_*}\le\bar\gamma_\tau$ and $\|d_{k_*}\|\le\varepsilon_{\mathrm{rej}}$. At $k_*$, the stationarity bound of \autoref{lem:model_props} gives
\[
  \|\nabla f_{\gamma_{k_*}}(x_{k_*})\|\le 3\|d_{k_*}\|\le 3\varepsilon_{\mathrm{rej}}={3\varepsilon/6=\varepsilon/2},
\]
and \autoref{cor:moreau_factor2}, applied with $\gamma=\gamma_{k_*}\le\bar\gamma$, converts this to the reference scale: $\|\nabla f_{\bar\gamma}(x_{k_*})\|\le 2\|\nabla f_{\gamma_{k_*}}(x_{k_*})\|\le\varepsilon$. When $\tau=0$, the identity $d_k\in\partial f(y_k)$ yields the subgradient guarantee directly.
\end{proof}

\subsection{Stochastic Model-Based Setting}

Replacing the exact minimizer of the deterministic oracle by an approximate one gives the \emph{stochastic} model-based oracle. We generalize the stochastic proximal oracle of \autoref{sec:oracles} from approximately solving the proximal subproblem to solving the model subproblem, keeping its defining feature: accuracy is asked for only in the safe regime, and only with constant probability.

\begin{definition}[Stochastic model-based oracle $\mathrm{SMO}(\nu_{\mathrm{m}}(\cdot),\delta_p)$]\label{def:smo}
Given $x$ and $\gamma>0$, the oracle returns a candidate $y(x,\gamma,\Xi^p)$, a random approximate minimizer of the model subproblem
\[
  M(y):=m_x(y)+\tfrac1{2\gamma}\|y-x\|^2.
\]
Just like the SPO, accuracy is required \emph{only in the safe regime}. In that regime, $M$ has the unique minimizer $y^*:=\argmin_y M(y)$, and with probability at least $1-\delta_p$ conditional on the past,
\begin{equation}\label{eq:smo}
  M\bigl(y(x,\gamma,\Xi^p)\bigr)-M(y^*)\le\nu_{\mathrm{m}}(\gamma)\qquad\text{for }\gamma\le\bar\gamma_\tau,
\end{equation}
and no requirement is imposed when $\gamma>\bar\gamma_\tau$. The pair $(\nu_{\mathrm{m}}(\cdot),\delta_p)$ is intrinsic to the oracle. The oracle output lies in $\dom f$ almost surely. Because $M$ is $(\tfrac1\gamma-\eta)$-strongly convex on the safe regime,~\eqref{eq:smo} yields a distance bound on the output:
\[
  \|y-y^*\|\le\sqrt{2\nu_{\mathrm{m}}(\gamma)\big/(\tfrac1\gamma-\eta)}.
\]
\end{definition}

The SMO generalizes the SPO. When $m_x=f$ (the proximal point, $\tau=0$, $\eta=\rho$), the model subproblem is the proximal subproblem and the SMO \emph{is} the SPO of \autoref{sec:oracles}. For the prox-linear and proximal-gradient models ($\eta=0$) it is a genuinely different oracle. 
 
On the good event where the accuracy (\ref{eq:smo}) is achieved, the descent and stationarity bound of \autoref{lem:model_props} still hold, degraded only by the oracle accuracy.

\begin{lemma}[Stochastic descent and stationarity bounds]\label{lem:smo_props}
Let $m_x$ be a two-sided model with parameters $(\tau,\eta)$, $\gamma\le\bar\gamma_\tau$, $x\in\dom f$, and let $y$ be an SMO output for which the good event~\eqref{eq:smo} holds, so that $M(y)\le M(y^*)+\nu_{\mathrm{m}}(\gamma)$. Writing $d=(x-y)/\gamma$,
\[
\begin{aligned}
    f(x)-f(y)&\ \ge\ \tfrac{2-(\tau+\eta)\gamma}{2}\gamma\|d\|^2-\sqrt{2\gamma\,\nu_{\mathrm{m}}(\gamma)}\,\|d\|-\nu_{\mathrm{m}}(\gamma),\\
    \|\nabla f_\gamma(x)\|&\ \le\ 3\bigl(\|d\|+\sqrt{2\nu_{\mathrm{m}}(\gamma)/(\gamma(1-\eta\gamma))}\bigr).
\end{aligned}
\]
\end{lemma}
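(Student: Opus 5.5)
The plan is to rerun the proof of \autoref{lem:model_props} with the exact minimizer $y^*$ replaced by the approximate output $y$. The discrepancy will be controlled through the distance bound from \autoref{def:smo}, namely $\|y-y^*\|\le r:=\sqrt{2\nu_{\mathrm m}(\gamma)/(\tfrac1\gamma-\eta)}=\sqrt{2\gamma\nu_{\mathrm m}(\gamma)/(1-\eta\gamma)}$. As before, write $M=m_x+\tfrac1{2\gamma}\|\cdot-x\|^2$ and $\Phi=f+\tfrac1{2\gamma}\|\cdot-x\|^2$, and let $p=\prox_{\gamma f}(x)$.

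\emph{Descent bound.} Start from the upper model inequality at $y$:
\[
f(y)\le m_x(y)+\tfrac\tau2\|y-x\|^2=M(y)-\bigl(\tfrac1{2\gamma}-\tfrac\tau2\bigr)\|y-x\|^2.
\]
Combine it with $M(y)\le M(y^*)+\nu_{\mathrm m}$ and $M(y^*)\le M(x)-\tfrac12(\tfrac1\gamma-\eta)\|x-y^*\|^2$, using $M(x)=f(x)$. This gives
\[
f(x)-f(y)\ge \tfrac12(\tfrac1\gamma-\eta)\|x-y^*\|^2+\bigl(\tfrac1{2\gamma}-\tfrac\tau2\bigr)\|x-y\|^2-\nu_{\mathrm m}.
\]
The remaining task is to replace $\|x-y^*\|^2$ by $\|x-y\|^2$. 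Write $\|x-y^*\|\ge\|x-y\|-\|y-y^*\|$, so
\[
\|x-y^*\|^2\ge\|x-y\|^2-2\|x-y\|\,\|y-y^*\|.
\]
Multiplying by $\tfrac12(\tfrac1\gamma-\eta)$, the cross term becomes at most $(\tfrac1\gamma-\eta)\gamma\|d\|\cdot r=\sqrt{(\tfrac1\gamma-\eta)\,2\nu_{\mathrm m}}\,\gamma\|d\|\le\sqrt{2\gamma\nu_{\mathrm m}}\,\|d\|$, since $(\tfrac1\gamma-\eta)\gamma^2\le\gamma$. The quadratic terms sum to $(\tfrac1\gamma-\tfrac{\tau+\eta}2)\gamma^2\|d\|^2$, which is exactly the claimed coefficient. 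This step is the delicate one, because the $\nu_{\mathrm m}$-dependence must come out linear in $\|d\|$ with constant exactly $\sqrt{2\gamma\nu_{\mathrm m}}$. The choice to drop only the negative part of the square, rather than using Young's inequality, is what keeps the constant sharp.

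\emph{Stationarity bound.} Here I reuse \eqref{eq:det_model_lem_pf}, which gives $\|\nabla f_\gamma(x)\|\le3\|d^*\|$ with $d^*=(x-y^*)/\gamma$. The triangle inequality then yields $\|d^*\|\le\|d\|+\|y-y^*\|/\gamma\le\|d\|+r/\gamma$. Since $r/\gamma=\sqrt{2\nu_{\mathrm m}/(\gamma(1-\eta\gamma))}$, this is exactly the stated bound. In other words, the claim is just the deterministic lemma applied to $y^*$ and then transferred to $y$.

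All the constants used above were already verified in the safe regime $\gamma\le\bar\gamma_\tau$: we have $1-\eta\gamma>0$ and $(\tau+\eta)\gamma\le1$. So the only real care needed is in the descent bound.
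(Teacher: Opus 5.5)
Your proposal is correct and follows the paper's proof essentially line for line. The descent bound uses the same chain: the upper model bound at $y$, the good event, strong convexity of $M$ at $y^*$, and $\|x-y^*\|^2\ge\|x-y\|^2-2\|x-y\|\,\|y-y^*\|$, with the cross term controlled via $(1-\eta\gamma)\|y-y^*\|\le\sqrt{2\gamma\nu_{\mathrm m}(\gamma)}$; the stationarity bound comes, exactly as in the paper, from applying \autoref{lem:model_props} to $y^*$ and then the triangle inequality. The only slip is a citation: the factor-$3$ bound is the conclusion of \autoref{lem:model_props}, not \eqref{eq:det_model_lem_pf} itself, which is an intermediate step toward it.
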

\begin{proof}
For the descent bound, write $\mu_M:=\tfrac1\gamma-\eta>0$ as the strong-convexity parameter of $M$ on the safe regime. The upper model bound together with $M(x)=m_x(x)=f(x)$ gives 
\[
\begin{aligned}
  f(y)&\le m_x(y)+\tfrac\tau2\|y-x\|^2=M(y)-\bigl(\tfrac1{2\gamma}-\tfrac\tau2\bigr)\|y-x\|^2,\\
  f(x)-f(y)&\ \ge\ [M(x)-M(y)]+\bigl(\tfrac1{2\gamma}-\tfrac\tau2\bigr)\|y-x\|^2.
\end{aligned}
\]
Strong convexity at the minimizer $y^*$ gives $M(x)-M(y^*)\ge\tfrac{\mu_M}2\|x-y^*\|^2$, and the good event gives $M(y)-M(y^*)\le\nu_{\mathrm{m}}(\gamma)$, hence $\|y-y^*\|\le\sqrt{2\nu_{\mathrm{m}}(\gamma)/\mu_M}$. Thus $M(x)-M(y)\ge\tfrac{\mu_M}2\|x-y^*\|^2-\nu_{\mathrm{m}}(\gamma)$, and since $\|x-y^*\|\ge\|x-y\|-\|y-y^*\|$ gives $\|x-y^*\|^2\ge\|x-y\|^2-2\|x-y\|\,\|y-y^*\|$, 
\[
\begin{aligned}
  \tfrac{\mu_M}2\|x-y^*\|^2
  &\ \ge\ \tfrac{\mu_M}2\|x-y\|^2-\mu_M\|x-y\|\,\|y-y^*\|\\
  &\ =\ \tfrac{(1-\eta\gamma)\gamma}2\|d\|^2-(1-\eta\gamma)\,\|d\|\,\|y-y^*\|\\
  &\ \ge\ \tfrac{(1-\eta\gamma)\gamma}2\|d\|^2-\sqrt{2\gamma\,\nu_{\mathrm{m}}(\gamma)}\,\|d\|,
\end{aligned}
\]
where the equality uses $\mu_M\gamma=1-\eta\gamma$ (so $\tfrac{\mu_M}2\|x-y\|^2=\tfrac{(1-\eta\gamma)\gamma}2\|d\|^2$ and $\mu_M\|x-y\|=(1-\eta\gamma)\|d\|$), and the last inequality uses the distance bound $\|y-y^*\|\le\sqrt{2\nu_{\mathrm{m}}(\gamma)/\mu_M}=\sqrt{2\gamma\,\nu_{\mathrm{m}}(\gamma)/(1-\eta\gamma)}$, so $(1-\eta\gamma)\|y-y^*\|\le\sqrt{(1-\eta\gamma)\,2\gamma\,\nu_{\mathrm{m}}(\gamma)}\le\sqrt{2\gamma\,\nu_{\mathrm{m}}(\gamma)}$.
Combining the above, we have
\[
  f(x)-f(y)\ \ge\ \tfrac{2-(\tau+\eta)\gamma}{2}\gamma\|d\|^2-\sqrt{2\gamma\,\nu_{\mathrm{m}}(\gamma)}\,\|d\|-\nu_{\mathrm{m}}(\gamma).
\]

For stationarity, the distance bound from \autoref{def:smo} gives
\[
  \frac{\|y-y^*\|}{\gamma}
  \le
  \sqrt{\frac{2\nu_{\mathrm{m}}(\gamma)}{\gamma(1-\eta\gamma)}}.
\]
Using \autoref{lem:model_props} for the properties of the exact model minimizer $y^*$, and then the triangle inequality,
\[
  \|\nabla f_\gamma(x)\|
  \le
  3\frac{\|x-y^*\|}{\gamma}
  \le
  3\left(\|d\|+\frac{\|y-y^*\|}{\gamma}\right),
\]
which gives the claimed bound.
\end{proof}

\autoref{lem:smo_props} implies that the high probability $O(\varepsilon^{-2})$ iteration complexity of APS holds for two-sided models as well. For simplicity we state the result for convex models ($\eta=0$), which covers the prox-linear and proximal-gradient instances. The proximal point  ($\eta=\rho$) is already covered by \autoref{thm:hp_main} through the SPO. The argument extends to general $\eta$ upon replacing $\sqrt{2\nu_{\mathrm{m}}(\gamma)/\gamma}$ by $\sqrt{2\nu_{\mathrm{m}}(\gamma)/(\gamma(1-\eta\gamma))}$ in the proof and rescaling the accuracy constant.

\begin{theorem}[Stochastic model-based APS]\label{thm:stoch_model}
	Let the candidate oracle be an $\mathrm{SMO}$ (\autoref{def:smo}) for a
	convex model ($\eta=0$), and let the difference oracle be the SDO of
	\autoref{sec:oracles}. Run \autoref{alg:aps} with
	$\sigma\in(0,1)$ and $\varepsilon_{\mathrm{rej}}=\varepsilon/12$, and
	suppose that the SMO accuracy satisfies
	\[
	  \nu_{\mathrm{m}}(\gamma)
	  \le
	  \frac{1-\sigma}{6}\,
	  \gamma\,\varepsilon_{\mathrm{rej}}^2
	  \qquad\text{for }\gamma\le\bar\gamma_\tau.
	\]
	 
	Suppose Assumptions~\ref{ass:weak_convexity}--\ref{ass:boundedness}
	hold, and suppose the model-based analogue of
	\autoref{ass:true_iter_prob_hp} holds with $p>p_0$: namely, in the
	definition of a true iteration, replace the SPO accuracy event by the
	SMO good event~\eqref{eq:smo}, and define safe and unsafe iterations
	relative to $\bar\gamma_\tau$ rather than $\bar\gamma$.  Define $h_{\mathrm{des}}$, $\ell_0^\pm$, $p_0$, and
	$R$ in 
	\autoref{thm:hp_main} using $\bar\gamma_\tau$ in place of $\bar\gamma$. Then the conclusion of \autoref{thm:hp_main} holds. In particular, under the same SDO tail conditions as
	in \autoref{cor:sdo}, APS reaches the stopping time 
	\[
	  \|\nabla f_{\bar\gamma}(X_k)\|\le\varepsilon
	\]
	with high probability within $\mathcal O(\varepsilon^{-2})$
	iterations.
	\end{theorem}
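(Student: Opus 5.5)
The plan is to reduce to the proof of \autoref{thm:hp_main}, which used the SPO in only two places: \autoref{lem:st_implies_s} (safe true iterations before $T_\varepsilon$ are successful) and \autoref{lem:progress_bound} (accepted steps give observable descent). \autoref{lem:progress_bound} uses only the acceptance test and the definition of $\mathcal E_k$, so it holds verbatim for any candidate oracle. Likewise, the counting inequality \autoref{prop:counting} uses only the one-step bookkeeping of $\ell_k^\pm$ (now relative to $\bar\gamma_\tau$), the bound $\Gamma_k\ge\bar\gamma_\tau/\beta_{\mathrm{inc}}$ on unsafe successful steps (giving $h_{\mathrm{des}}$ with $\bar\gamma_\tau$), and the safe-true-implies-successful inclusion. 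The Azuma--Hoeffding step and \autoref{cor:sdo} are also oracle-independent. Hence the whole task reduces to proving the model-based analogue of \autoref{lem:st_implies_s}: if $k<T_\varepsilon$, $\Gamma_k\le\bar\gamma_\tau$, and the SMO good event and $\mathcal E_k\le\epsilon_f$ hold, then $\Theta_k=1$.

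\textbf{Step 1: the rejection safeguard.} Since $\bar\gamma_\tau\le\bar\gamma$, \autoref{cor:moreau_factor2} gives $\|\nabla f_{\Gamma_k}(X_k)\|\ge\tfrac12\|\nabla f_{\bar\gamma}(X_k)\|>\varepsilon/2$. With $\eta=0$, \autoref{lem:smo_props} yields $\varepsilon/2<3(\|D_k\|+\sqrt{2\nu_{\mathrm m}/\Gamma_k})$. The accuracy assumption gives $\sqrt{2\nu_{\mathrm m}/\Gamma_k}\le\sqrt{(1-\sigma)/3}\,\varepsilon_{\mathrm{rej}}<\varepsilon_{\mathrm{rej}}$. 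Therefore $\|D_k\|>\varepsilon/6-\varepsilon_{\mathrm{rej}}=\varepsilon/12=\varepsilon_{\mathrm{rej}}$, which is where the choice $\varepsilon_{\mathrm{rej}}=\varepsilon/12$ enters.

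\textbf{Step 2: the descent inequality.} This is the main obstacle, because the error terms in \autoref{lem:smo_props} are not proportional to $\|D_k\|^2$ and must be absorbed using $\|D_k\|\ge\varepsilon_{\mathrm{rej}}$. With $\eta=0$ and $\tau\gamma\le\tfrac12$, the leading coefficient is $\tfrac{2-\tau\gamma}{2}\ge\tfrac34$. Set $a=\sqrt{\nu_{\mathrm m}/\gamma}\le\sqrt{(1-\sigma)/6}\,\varepsilon_{\mathrm{rej}}$. It then suffices to show
\[
\Bigl(\tfrac34-\tfrac\sigma2\Bigr)\|D_k\|^2-\sqrt2\,a\|D_k\|-a^2\ge0 .
\]
Dividing by $\|D_k\|^2$ and using $a/\|D_k\|\le\sqrt{(1-\sigma)/6}$, the left side is at least
\[
\tfrac34-\tfrac\sigma2-\sqrt{(1-\sigma)/3}-\tfrac{1-\sigma}{6}.
\]
Write $u=1-\sigma\in(0,1)$. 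The expression becomes $\tfrac14+\tfrac u2-\sqrt{u/3}-\tfrac u6=\tfrac14+\tfrac u3-\sqrt{u/3}=(\sqrt{u/3}-\tfrac12)^2\ge0$. I expect this to close with exactly the stated constant; if it turns out tight or fails, I would fall back on the sharper coefficient $\tfrac{2-\tau\gamma}{2}$. This gives $f(X_k)-f(Y_k)\ge\tfrac\sigma2\Gamma_k\|D_k\|^2$, and then $\mathcal E_k\le\epsilon_f$ gives the noisy test, exactly as in \autoref{lem:st_implies_s}.

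\textbf{Step 3: conclusion.} With the analogue of \autoref{lem:st_implies_s} established, I would rerun \autoref{prop:counting} and \autoref{thm:hp_main} with $\bar\gamma_\tau$ in place of $\bar\gamma$. The stopping time stays $T_\varepsilon$, defined via $\nabla f_{\bar\gamma}$. Finally, I would invoke \autoref{cor:sdo} to obtain the $\mathcal O(\varepsilon^{-2})$ high-probability bound, noting that $h_{\mathrm{des}}=\Theta(\varepsilon^2/(\tau+\rho))$.
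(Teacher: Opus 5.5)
Your proposal is correct and follows the paper's proof essentially step for step. It uses the same reduction to the model-based analogue of Lemma~\ref{lem:st_implies_s}, the same safeguard argument via Lemma~\ref{cor:moreau_factor2} and Lemma~\ref{lem:smo_props} with $\varepsilon_{\mathrm{rej}}=\varepsilon/12$, and the same absorption of the oracle error using $\|D_k\|>\varepsilon_{\mathrm{rej}}$ and $\tfrac{2-\tau\Gamma_k}{2}\ge\tfrac34$. The only difference is cosmetic: the paper handles the cross term with Young's inequality $\sqrt{2\Gamma_k\nu_{\mathrm m}(\Gamma_k)}\,\|D_k\|\le\tfrac14\Gamma_k\|D_k\|^2+2\nu_{\mathrm m}(\Gamma_k)$, whereas you check the resulting quadratic directly and reduce it to the perfect square $(\sqrt{u/3}-\tfrac12)^2\ge0$; both close with the stated constants.
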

	
	\begin{proof}
		It suffices to verify the model-based analogue of
		\autoref{lem:st_implies_s}, which is the only oracle-specific step
		needed in the proof of \autoref{thm:hp_main}: every
		model-based true iteration that is safe relative to
		$\bar\gamma_\tau$ and occurs before $T_\varepsilon$ is successful.
		
		Fix such an iteration $k<T_\varepsilon$. Then
		$\Gamma_k\le\bar\gamma_\tau\le\bar\gamma$, the SMO good
		event~\eqref{eq:smo} holds, and $\mathcal E_k\le\epsilon_f$. Since
		$k<T_\varepsilon$,
		\[
		  \|\nabla f_{\bar\gamma}(X_k)\|>\varepsilon.
		\]
		Hence, by \autoref{cor:moreau_factor2},
		\[
		  \|\nabla f_{\Gamma_k}(X_k)\|>\frac{\varepsilon}{2}.
		\]
		The accuracy condition implies
		\[
		  \sqrt{\frac{2\nu_{\mathrm{m}}(\Gamma_k)}{\Gamma_k}}
		  \le
		  \sqrt{\frac{1-\sigma}{3}}\,
		  \varepsilon_{\mathrm{rej}}
		  \le \varepsilon_{\mathrm{rej}}.
		\]
		Combining this with \autoref{lem:smo_props}, we get
		\[
		  \|D_k\|
		  >
		  \frac{\varepsilon}{6}
		  -
		  \sqrt{\frac{2\nu_{\mathrm{m}}(\Gamma_k)}{\Gamma_k}}
		  \ge
		  \frac{\varepsilon}{6}-\varepsilon_{\mathrm{rej}}
		  \ge
		  \varepsilon_{\mathrm{rej}},
		\]
		where the last inequality uses
		$\varepsilon_{\mathrm{rej}}=\varepsilon/12$. Thus the rejection
		safeguard is satisfied.
		
		Since $\eta=0$ and $\Gamma_k\le\bar\gamma_\tau$, we have $\tau\Gamma_k\le\tfrac12$, so $\tfrac{2-\tau\Gamma_k}{2}\ge\tfrac34$. Moreover, the descent bound of \autoref{lem:smo_props}, the inequality $\sqrt{2\Gamma_k\nu_{\mathrm{m}}(\Gamma_k)}\,\|D_k\|\le\tfrac14\Gamma_k\|D_k\|^2+2\nu_{\mathrm{m}}(\Gamma_k)$, and
		$\|D_k\|>\varepsilon_{\mathrm{rej}}$ give
		\begin{align*}
		  f(X_k)-f(Y_k)
		  &\ge
		  \tfrac34\Gamma_k\|D_k\|^2-\sqrt{2\Gamma_k\nu_{\mathrm{m}}(\Gamma_k)}\,\|D_k\|-\nu_{\mathrm{m}}(\Gamma_k)\\
		  &\ge
		  \tfrac12\Gamma_k\|D_k\|^2-3\nu_{\mathrm{m}}(\Gamma_k)\\
		  &>
		  \left(\tfrac12-\tfrac{1-\sigma}{2}\right)\Gamma_k\|D_k\|^2
		  \ =\ \frac{\sigma}{2}\Gamma_k\|D_k\|^2,
		\end{align*}
		using $\nu_{\mathrm{m}}(\Gamma_k)\le\tfrac{1-\sigma}{6}\Gamma_k\varepsilon_{\mathrm{rej}}^2<\tfrac{1-\sigma}{6}\Gamma_k\|D_k\|^2$ in the last inequality.
		Since $\mathcal E_k\le\epsilon_f$,
		\[
		  \Delta_k
		  \ge
		  f(X_k)-f(Y_k)-\mathcal E_k
		  \ge
		  \frac{\sigma}{2}\Gamma_k\|D_k\|^2-\epsilon_f.
		\]
		Both conditions of the acceptance test therefore hold, so iteration
		$k$ is successful.
		 
		This is the only oracle-specific step needed in the proof of
		\autoref{thm:hp_main}. Its counting and concentration arguments now
		apply similarly with safe
		and unsafe iterations defined relative to $\bar\gamma_\tau$ and $
		  h_{\mathrm{des}}
		  =
		  \frac{\sigma\bar\gamma_\tau\varepsilon_{\mathrm{rej}}^2}
			   {2\beta_{\mathrm{inc}}}.$
		The claimed conclusion follows.
		\end{proof}

The adaptive proximal-point, prox-linear, and proximal-gradient methods are thus three instances of one adaptive framework. The APS framework covers all of them, and the analysis applies to each. 

\section{Application to Expected Risk Objectives}\label{sec:sample_complex}

In this section, we apply APS with the stochastic oracle framework for stochastic objectives of the form
\[
f(x)=\mathbb E_\xi[F(x,\xi)],
\qquad \xi\sim P.
\] 
{\sloppy This class includes expected-risk minimization in machine learning, for a data point $\xi=d$ drawn from the data distribution $d \sim P$ and $F(x,d)=\ell(x;d)$, where $\ell$ is the loss function. The empirical-risk optimization problem is the finite-support specialization, with  $\xi\sim\mathrm{Unif}\{1,\ldots,n\}$, for which $f(x)=(1/n)\sum_{i=1}^n f_i(x)$.  It also covers simulation optimization, where $x$ is the decision variable and $\xi$ represents a random sample path.  We assume we have access to independent samples $F(\cdot,\xi)$ with $\xi\sim P$.\par}

Throughout this section, we assume that $f$ is $\rho$-weakly convex, finite-valued and bounded from below. We will show how the oracles from Sections~\ref{sec:inexact_backtracking_proximal_point} and \ref{sec:model_oracles} can be realized for this problem. The stochastic difference oracle is implemented by averaging paired differences $
F(x,\xi)-F(y,\xi),$ 
and the stochastic proximal or model-based oracles (SPO and SMO) are implemented by running the stochastic subgradient method on the proximal or model-based subproblem.

\subsection{Stochastic Difference Oracle}\label{sec:stoch_sdo}
 
Given a pair $(x,y)$, draw $\xi_1,\ldots,\xi_{b}$ independently from $P$ and return
\begin{equation}\label{eq:stoch_sdo}
\Delta(x,y):=\frac{1}{b}\sum_{j=1}^{b}\bigl(F(x,\xi_j)-F(y,\xi_j)\bigr).
\end{equation} 
This estimate uses common random numbers and can have much smaller variance than estimating \(f(x)\) and \(f(y)\) separately.
We assume that the centered paired difference has a uniformly bounded \(q\)th moment: for some \(q\ge2\) and \(\sigma_d<\infty\),
\begin{equation}\label{eq:diff_moments}
\mathbb E_\xi\!\left[
\left|F(x,\xi)-F(y,\xi)-\big(f(x)-f(y)\big)\right|^q
\right]\le \sigma_d^q,
\qquad \forall x,y.
\end{equation}
At iteration \(k\), when the oracle is queried at \((X_k,Y_k)\), write
\[
\mathcal E_k:=\left|\Delta(X_k,Y_k)-\big(f(X_k)-f(Y_k)\big)\right|,
\qquad
\bar{\mathcal E}_k:=\mathbb E[\mathcal E_k\mid\mathcal G_k],
\qquad
W_k:=\mathcal E_k-\bar{\mathcal E}_k.
\]

\begin{lemma}[Minibatch realizes the SDO model]\label{lem:sdo_minibatch}
Suppose~\eqref{eq:diff_moments} holds and the SDO is given by~\eqref{eq:stoch_sdo}. Then, for every iteration \(k\),
\[
\bar{\mathcal E}_k\le \frac{\sigma_d}{\sqrt{b}},
\qquad
\mathbb E\!\left[|W_k|^q\mid\mathcal G_k\right]\le \zeta_q := K_q\,\frac{\sigma_d^q}{b^{q/2}}
\]
for a constant $K_q$ depending only on $q$. 
\end{lemma}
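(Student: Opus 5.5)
Conditionally on $\mathcal G_k$, the pair $(X_k,Y_k)$ is fixed and the fresh samples $\xi_1,\dots,\xi_b$ are drawn independently of $\mathcal G_k$. Define the centered paired differences $Z_j:=F(X_k,\xi_j)-F(Y_k,\xi_j)-\big(f(X_k)-f(Y_k)\big)$. Given $\mathcal G_k$, these are i.i.d. with mean zero, and by \eqref{eq:diff_moments} they satisfy $\mathbb E[|Z_j|^q\mid\mathcal G_k]\le\sigma_d^q$. The error is then $\mathcal E_k=|S|$ with $S:=\frac1b\sum_{j=1}^b Z_j$. Both bounds reduce to moment bounds on the average of i.i.d. centered variables, applied pointwise in the conditioning (a freezing/substitution argument, which is valid because the $\xi_j$ are independent of $\mathcal G_k$).

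For the bias bound, Jensen's inequality gives $\bar{\mathcal E}_k=\mathbb E[|S|\mid\mathcal G_k]\le(\mathbb E[S^2\mid\mathcal G_k])^{1/2}$. By independence and zero mean, $\mathbb E[S^2\mid\mathcal G_k]=\frac1b\mathbb E[Z_1^2\mid\mathcal G_k]$. Since $q\ge2$, Lyapunov's inequality gives $\mathbb E[Z_1^2\mid\mathcal G_k]\le(\mathbb E|Z_1|^q)^{2/q}\le\sigma_d^2$. Hence $\bar{\mathcal E}_k\le\sigma_d/\sqrt b$.

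For the centered $q$-th moment, write $|W_k|^q=\big||S|-\mathbb E[|S|\mid\mathcal G_k]\big|^q$. The elementary inequality $|a-c|^q\le2^{q-1}(|a|^q+|c|^q)$ combined with conditional Jensen ($\bar{\mathcal E}_k^q\le\mathbb E[|S|^q\mid\mathcal G_k]$) gives $\mathbb E[|W_k|^q\mid\mathcal G_k]\le2^q\,\mathbb E[|S|^q\mid\mathcal G_k]$. The main step is then the Marcinkiewicz--Zygmund (or Rosenthal) inequality for sums of independent mean-zero variables:
\[
\mathbb E\Big|\sum_{j=1}^b Z_j\Big|^q\le B_q\,\mathbb E\Big(\sum_{j=1}^b Z_j^2\Big)^{q/2}.
\]
Since $q/2\ge1$, convexity (power-mean) gives $\big(\sum_j Z_j^2\big)^{q/2}\le b^{q/2-1}\sum_j|Z_j|^q$, so $\mathbb E|\sum_j Z_j|^q\le B_q b^{q/2}\sigma_d^q$. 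Dividing by $b^q$ yields $\mathbb E[|S|^q\mid\mathcal G_k]\le B_q\sigma_d^q/b^{q/2}$, and therefore the claim holds with $K_q=2^qB_q$.

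The only step requiring care is invoking Marcinkiewicz--Zygmund with the correct $b^{q/2}$ scaling, which replaces the naive $b^{q-1}$ bound from the triangle inequality, together with justifying that the conditioning on $\mathcal G_k$ may be treated as fixing $(X_k,Y_k)$. Everything else is Jensen and Lyapunov.
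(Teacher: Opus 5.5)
Your proposal is correct and matches the paper's proof step for step: condition on $\mathcal G_k$ to freeze $(X_k,Y_k)$, get the bias bound from the conditional variance $\sigma_d^2/b$ via Jensen, bound $\mathbb E[\mathcal E_k^q\mid\mathcal G_k]$ by Marcinkiewicz--Zygmund, and pass to $W_k$ using $|a-c|^q\le 2^{q-1}(|a|^q+|c|^q)$ together with $\bar{\mathcal E}_k^q\le\mathbb E[\mathcal E_k^q\mid\mathcal G_k]$. The only difference is that you write out the power-mean step turning the square-function bound into the $b^{q/2}$ scaling, which the paper leaves implicit.
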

\begin{proof}
Condition on \(\mathcal G_k\). Then \(X_k\) and \(Y_k\) are fixed, and the samples used by the SDO are independent of the past. Define
\[
\eta_j:=F(X_k,\xi_j)-F(Y_k,\xi_j)-\big(f(X_k)-f(Y_k)\big).
\]
Then \(\mathbb E[\eta_j\mid\mathcal G_k]=0\), and~\eqref{eq:diff_moments} gives
\(\mathbb E[|\eta_j|^q\mid\mathcal G_k]\le\sigma_d^q\). Since \(q\ge2\), Jensen's inequality also gives
\(\mathbb E[\eta_j^2\mid\mathcal G_k]\le\sigma_d^2\). Therefore
\[
\mathrm{Var}(\Delta(X_k,Y_k)\mid\mathcal G_k)\le \frac{\sigma_d^2}{b}.
\]
Jensen's inequality yields $\bar{\mathcal E}_k\le \sigma_d/\sqrt{b}$. The Marcinkiewicz--Zygmund inequality gives
\[
\mathbb E\!\left[\mathcal E_k^q\mid\mathcal G_k\right]
=
\mathbb E\!\left[\left|\frac{1}{b}\sum_{j=1}^{b}\eta_j\right|^q\middle|\mathcal G_k\right]
\le K_q\frac{\sigma_d^q}{b^{q/2}},
\]
for a constant \(K_q\) depending only on \(q\). Since \(W_k=\mathcal E_k-\bar{\mathcal E}_k\) and Jensen's inequality gives
\(\bar{\mathcal E}_k^q\le \mathbb E[\mathcal E_k^q\mid\mathcal G_k]\),
\[
\mathbb E[|W_k|^q\mid\mathcal G_k]
\le 2^{q-1}\left(
\mathbb E[\mathcal E_k^q\mid\mathcal G_k]+\bar{\mathcal E}_k^q
\right)
\le 2^qK_q\frac{\sigma_d^q}{b^{q/2}}.
\]
Enlarging \(K_q\) by a factor depending only on \(q\) proves the moment bound.
\end{proof}

\begin{corollary}[Minibatch size for small SDO error]\label{cor:sdo_minibatch_recipe}
	Fix $\delta_d\in(0,1)$ and $\epsilon_f>0$. If
	\begin{equation}\label{eq:stoch_b_choice}
	b\ge \frac{\sigma_d^2}{\delta_d\epsilon_f^2},
	\end{equation}
	then
	\[
	\mathbb P(\mathcal E_k\le\epsilon_f\mid\mathcal G_k)\ge 1-\delta_d,
	\qquad
	\bar{\mathcal E}_k\le\epsilon_f.
	\]
\end{corollary}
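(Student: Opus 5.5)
The plan is to derive both claims directly from the conditional second-moment bound already established in the proof of Lemma~\ref{lem:sdo_minibatch}, followed by Chebyshev's inequality. Conditional on $\mathcal G_k$, the points $X_k$ and $Y_k$ are fixed, and the minibatch $\xi_1,\ldots,\xi_b$ is independent of the past. The centered variables $\eta_j$ defined in that proof are therefore i.i.d. given $\mathcal G_k$, each with mean zero and conditional second moment at most $\sigma_d^2$. Jensen's inequality, applied to \eqref{eq:diff_moments} with $q\ge 2$, supplies this second-moment bound. Independence then gives
\[
\mathbb E[\mathcal E_k^2\mid\mathcal G_k]=\mathbb E\Bigl[\Bigl(\tfrac1b\textstyle\sum_{j=1}^b\eta_j\Bigr)^2\Bigm|\mathcal G_k\Bigr]\le \frac{\sigma_d^2}{b}.
\]

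For the bias claim, Jensen's inequality gives $\bar{\mathcal E}_k\le \sqrt{\mathbb E[\mathcal E_k^2\mid\mathcal G_k]}\le \sigma_d/\sqrt b$. The choice \eqref{eq:stoch_b_choice} yields $\sigma_d/\sqrt b\le \sqrt{\delta_d}\,\epsilon_f$, and since $\delta_d<1$ this is at most $\epsilon_f$. So $\bar{\mathcal E}_k\le\epsilon_f$ almost surely.

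For the probability claim, the conditional Chebyshev (or Markov applied to $\mathcal E_k^2$) inequality gives
\[
\mathbb P(\mathcal E_k>\epsilon_f\mid\mathcal G_k)\le \frac{\mathbb E[\mathcal E_k^2\mid\mathcal G_k]}{\epsilon_f^2}\le \frac{\sigma_d^2}{b\,\epsilon_f^2}\le \delta_d.
\]
The last inequality is exactly \eqref{eq:stoch_b_choice}. Taking complements gives $\mathbb P(\mathcal E_k\le\epsilon_f\mid\mathcal G_k)\ge 1-\delta_d$.

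There is no real obstacle here. The only point needing care is justifying the conditional independence, namely that the fresh samples are independent of $\mathcal G_k$ even though $Y_k$ is random. This is what permits treating $(X_k,Y_k)$ as fixed and applying the unconditional moment bound \eqref{eq:diff_moments} uniformly over all pairs. The same point was already used in Lemma~\ref{lem:sdo_minibatch}.
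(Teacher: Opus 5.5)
Your proposal is correct and follows the paper's own proof: conditional Chebyshev with the variance bound $\sigma_d^2/b$ from Lemma~\ref{lem:sdo_minibatch} gives the probability claim, and Jensen gives $\bar{\mathcal E}_k\le\sigma_d/\sqrt b\le\epsilon_f$. Your remark that $\sigma_d/\sqrt b\le\sqrt{\delta_d}\,\epsilon_f\le\epsilon_f$ simply spells out the last inequality, which the paper leaves implicit.
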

\begin{proof}
Chebyshev's inequality and the variance bound in the proof of Lemma~\ref{lem:sdo_minibatch} give
\[
\mathbb P(\mathcal E_k>\epsilon_f\mid\mathcal G_k)
\le \frac{\sigma_d^2}{b\epsilon_f^2}\le\delta_d,
\]
and Lemma~\ref{lem:sdo_minibatch} gives $\bar{\mathcal E}_k\le \sigma_d/\sqrt{b}\le\epsilon_f$.
\end{proof}

\begin{remark}
	The condition \(p>p_0\) in
	\autoref{thm:hp_main} requires
	\(\epsilon_f \leq c h_{\mathrm{des}}=\Theta(\varepsilon^2)\), with \(c>0\)
	sufficiently small. The minibatch condition~\eqref{eq:stoch_b_choice}
	therefore gives
$b=\Theta(\epsilon_f^{-2})=\Theta(\varepsilon^{-4})$ per SDO call. Since APS makes one SDO call per iteration and the iteration bound is \(\mathcal O(\varepsilon^{-2})\), the plain minibatch SDO construction uses \(\mathcal O(\varepsilon^{-6})\) samples over the entire run of the algorithm. This is the cost under the uniform noise assumption; other variance-reduced estimators may improve this bound when additional structure yields a smaller effective conditional variance, but such
refinements are outside the raw minibatch setting analyzed here.
\end{remark}

\subsection{Stochastic Proximal and Model-Based Oracles}\label{sec:stoch_spo}
We realize the SPO by running a stochastic subgradient method on the proximal subproblem $\Phi_k$ for at most $T_{\max}$ steps, using the subgradients discussed below. On a safe iteration where $\Gamma_k\le\bar\gamma$, the proximal subproblem is strongly convex with strong-convexity parameter at least $1/(2\Gamma_k)$, so one may use standard algorithms to find a point that approximately minimizes the strongly convex subproblem in reasonable time. On an unsafe iteration, the same routine may still be run, but its output is treated only as a candidate point and is accepted only if it passes the descent test.

For any $y$ and any $\xi$, we assume we can compute $g(y,\xi)$ satisfying
\[
g(y,\xi)\in\partial_y F(y,\xi),
\qquad
\mathbb E_\xi[g(y,\xi)]\in\partial f(y).
\]
Consequently, for any $x$ and parameter $\gamma>0$, a stochastic subgradient of
$
\Phi(y)=f(y)+\frac{1}{2\gamma}\|y-x\|^2
$
is
\[
g_\Phi(y,\xi):=g(y,\xi)+\gamma^{-1}(y-x).
\]
Since $\mathbb E_\xi[g(y,\xi)]\in\partial f(y)$, we have $\mathbb E_\xi[g_\Phi(y,\xi)]\in\partial\Phi(y)$.
Following the assumptions used in \cite{Grimmer2019NonLipschitz}, we assume there exist constants \(\sigma_0,\sigma_1,B_\Phi>0\) such that the following conditions hold on proximal subproblems where \(\gamma_k\le\bar\gamma\):
\begin{equation}\label{eq:sfo_growth}
\mathbb E_\xi\!\left[\|g_{\Phi_k}(y,\xi)\|^2\right]
\le
\sigma_0^2+\sigma_1\bigl(\Phi_k(y)-\Phi_k^*\bigr),
 \qquad 
\Phi_k(x_k)-\Phi_k^*\le B_\Phi.
\end{equation}
We emphasize that no conditions are required when $\gamma_k > \bar\gamma$. We now show that the stochastic subgradient method realizes the SPO.

\begin{lemma}[SGM realizes the SPO]\label{lem:spo_sgm}
Under Assumption~\ref{ass:weak_convexity} and condition~\eqref{eq:sfo_growth},
run the subgradient method of~\cite{Grimmer2019NonLipschitz} for $T_{\max}$ steps, using $\mu_k=(2\Gamma_k)^{-1}$ as its input strongly convex parameter.
On every iteration with $\Gamma_k\le\bar\gamma$, its output $Y_k$ satisfies
\begin{equation}\label{eq:spo_sgm_inexp}
\mathbb E\bigl[\Phi_k(Y_k)-\Phi_k^*\bigm|\mathcal F_{k-1}\bigr]
\le
\frac{8\sigma_0^2\Gamma_k}{T_{\max}} + \frac{8\sigma_1^2\bar\gamma B_\Phi\,\Gamma_k}{T_{\max}^2}
.
\end{equation}
Consequently, with $J_k$ as in Definition~\ref{def:true_iter}, the  SPO failure probability satisfies
\begin{equation}\label{eq:spo_sgm_failure_envelope}
\delta_p := \mathbb P(J_k=0\mid\mathcal F_{k-1})
\le 
{\frac{256}{\varepsilon^2T_{\max}}\left({\sigma_0^2} + \frac{\sigma_1^2\bar\gamma B_\Phi}{T_{\max}}\right)}
.
\end{equation}
\end{lemma}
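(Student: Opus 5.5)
The plan is to condition on $\mathcal F_{k-1}$ and work on an iteration with $\Gamma_k\le\bar\gamma$. There $X_k$ and $\Gamma_k$ are fixed, and $\Phi_k$ is a deterministic strongly convex function, so we can apply the convergence theorem of \cite{Grimmer2019NonLipschitz} directly. We then convert the resulting expectation bound into the failure probability via Markov's inequality.

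First, verify the hypotheses of Grimmer's non-Lipschitz strongly convex result for $\Phi_k$. Since $\Gamma_k\le\bar\gamma$, $\Phi_k$ is $(1/\Gamma_k-\rho)$-strongly convex with $1/\Gamma_k-\rho\ge 1/(2\Gamma_k)=\mu_k$, so $\mu_k$ is a valid (conservative) strong-convexity input. The stochastic subgradients $g_{\Phi_k}(y,\xi)$ are unbiased for an element of $\partial\Phi_k(y)$. They satisfy the growth condition $\mathbb E\|g_{\Phi_k}\|^2\le\sigma_0^2+\sigma_1(\Phi_k(y)-\Phi_k^*)$ by \eqref{eq:sfo_growth}, and the initial gap at $x_k$ is at most $B_\Phi$. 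The inner samples are drawn fresh, independently of $\mathcal F_{k-1}$, so the conditional expectation equals the expectation for a fixed subproblem.

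Second, quote Grimmer's bound for the averaged (or suitably weighted) iterate with stepsizes $\alpha_t\propto 1/(\mu(t+1))$. It has the form $\mathbb E[\Phi(Y)-\Phi^*]\le c_1\sigma_0^2/(\mu T)+c_2\sigma_1^2(\Phi(x_0)-\Phi^*)/(\mu^2T^2)$. Substituting $\mu_k=1/(2\Gamma_k)$ gives $1/\mu_k=2\Gamma_k$ and $1/\mu_k^2=4\Gamma_k^2\le 4\bar\gamma\Gamma_k$. With $B_\Phi$, this yields \eqref{eq:spo_sgm_inexp} with the constants $8$, provided Grimmer's constants are $c_1\le4$ and $c_2\le2$.

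This constant bookkeeping is the main obstacle. The second term's dependence on $\sigma_1^2$ and the powers of $\mu$ must match the exact statement in \cite{Grimmer2019NonLipschitz}. If the theorem there carries $\sigma_1$ or $\mu$ differently, I would first reread its statement and adjust, using $\Gamma_k\le\bar\gamma$ to trade one factor of $\Gamma_k$ for $\bar\gamma$, which is exactly why $\bar\gamma$ appears.

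Finally, since $\Phi_k(Y_k)-\Phi_k^*\ge0$, Markov's inequality with $\nu(\Gamma_k)=\varepsilon^2\Gamma_k/32$ from Definition~\ref{def:true_iter} gives
\[
\mathbb P(J_k=0\mid\mathcal F_{k-1})\le\frac{32}{\varepsilon^2\Gamma_k}\Bigl(\frac{8\sigma_0^2\Gamma_k}{T_{\max}}+\frac{8\sigma_1^2\bar\gamma B_\Phi\Gamma_k}{T_{\max}^2}\Bigr).
\]
The factors $\Gamma_k$ cancel, leaving $\frac{256}{\varepsilon^2T_{\max}}\bigl(\sigma_0^2+\sigma_1^2\bar\gamma B_\Phi/T_{\max}\bigr)$, which is \eqref{eq:spo_sgm_failure_envelope}. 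On unsafe iterations $J_k=1$ by definition, so the bound holds trivially there.
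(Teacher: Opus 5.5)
Your proposal follows the paper's proof. You check that $\mu_k=(2\Gamma_k)^{-1}$ is a valid strong-convexity input when $\Gamma_k\le\bar\gamma$, invoke Theorem~7 of \cite{Grimmer2019NonLipschitz}, substitute $\mu_k^{-1}=2\Gamma_k$, and finish with Markov's inequality at $\nu(\Gamma_k)=\varepsilon^2\Gamma_k/32$. The constant bookkeeping you flagged is settled in the paper by one explicit step: the cited bound is stated in terms of the initial distance to the minimizer, so the paper first uses strong convexity to get $\|X_k-\prox_{\Gamma_k f}(X_k)\|^2\le 2B_\Phi/(\Gamma_k^{-1}-\rho)\le 4\bar\gamma B_\Phi$. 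This yields the second term $4\sigma_1^2\bar\gamma B_\Phi/(\mu_k(T_{\max}+1)(T_{\max}+2))$, which amounts to your form with $c_1=4$, $c_2=2$; the trade of $\Gamma_k$ for $\bar\gamma$ happens inside this distance bound rather than in $\mu_k^{-2}$.
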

 
\begin{proof}
On a safe iteration, $\Phi_k$ is strongly convex with parameter at least $\Gamma_k^{-1}-\rho\ge 1/(2\Gamma_k) = \mu_k$. The initial-gap condition and strong convexity give
$
\|X_k-\prox_{\Gamma_k f}(X_k)\|^2
\le
\frac{2\bigl(\Phi_k(X_k)-\Phi_k^*\bigr)}{\Gamma_k^{-1}-\rho}
\le
4\bar\gamma B_\Phi.
$
Running the stochastic subgradient method of \cite{Grimmer2019NonLipschitz} for $T_{\max}$ steps gives (see Theorem 7 in \cite{Grimmer2019NonLipschitz}):
\begin{equation*}
\mathbb E\bigl[\Phi_k(Y_k)-\Phi_k^*\bigm|\mathcal F_{k-1}\bigr]
\le
\frac{4\sigma_0^2}{\mu_k(T_{\max}+2)}
+
\frac{4\sigma_1^2\bar\gamma B_\Phi}{\mu_k(T_{\max}+1)(T_{\max}+2)}.
\end{equation*}
Since $\mu_k^{-1}=2\Gamma_k$, this is bounded by~\eqref{eq:spo_sgm_inexp}. Markov's inequality gives
\begin{align*}
\mathbb P(J_k=0\mid\mathcal F_{k-1})
=
\mathbb P\bigl(\Phi_k(Y_k)-\Phi_k^*>\nu(\Gamma_k)\mid\mathcal F_{k-1}\bigr) 
\le \frac{\mathbb E\bigl[\Phi_k(Y_k)-\Phi_k^*\bigm|\mathcal F_{k-1}\bigr]}{\nu(\Gamma_k)}.
\end{align*}
Substituting in \eqref{eq:spo_sgm_inexp} and the expression for $\nu(\Gamma_k)$ from \autoref{def:true_iter} proves~\eqref{eq:spo_sgm_failure_envelope}.
\end{proof}

\begin{remark}
	The condition~\eqref{eq:spo_sgm_failure_envelope} shows that, for any
	fixed SPO failure probability \(\delta_p\in(0,1)\), it is sufficient to take $T_{\max}=\Theta(\varepsilon^{-2}).$
	Thus, the stochastic proximal oracle requires \(\mathcal O(\varepsilon^{-2})\) stochastic subgradient evaluations per iteration. Combined with the \(\mathcal O(\varepsilon^{-2})\) iteration bound from \autoref{thm:hp_main}, this yields an overall sample complexity of \(\mathcal O(\varepsilon^{-4})\) stochastic subgradient evaluations over the entire run of the algorithm. 
\end{remark}

\paragraph{Realizing the stochastic model-based oracle.}
The SPO construction extends directly to the stochastic
model-based oracle of \autoref{def:smo}. For a convex model
($\eta=0$), apply the same stochastic subgradient method to 
\[
  M_k(y)
  =
  m_{X_k}(y)+\frac{1}{2\Gamma_k}\|y-X_k\|^2
\]
in place of $\Phi_k$. If $g_m(y,\xi)$ is an unbiased stochastic
subgradient of $m_{X_k}$, then
\[
  g_{M_k}(y,\xi)
  :=
  g_m(y,\xi)+\Gamma_k^{-1}(y-X_k)
\]
is an unbiased stochastic subgradient of $M_k$. Under the analogue
of~\eqref{eq:sfo_growth}, with $\Phi_k$ replaced by $M_k$ and $\bar{\gamma}$ replaced by $\bar\gamma_\tau$, the proof of \autoref{lem:spo_sgm}
applies.
Consequently the SMO accuracy required by \autoref{thm:stoch_model}
can be achieved with $T_{\max}=\mathcal O(\varepsilon^{-2})$ stochastic-subgradient steps, matching the sample complexity of the proximal oracle.

\section{Numerical Illustration}\label{sec:numerics}

We illustrate APS on robust phase retrieval, a standard nonsmooth weakly convex
test problem~\cite{DuchiRuanPhase,DuchiRuan2018,DDStochModelBased}:
\begin{equation*}
  f(x)=\frac{1}{n}\sum_{i=1}^{n}\bigl|\langle a_i,x\rangle^{2}-b_i\bigr|,
  \qquad a_i\sim\mathcal N(0,I_d).
\end{equation*}
We use $d=50$, $n=400$, and a planted signal $x_\star$ drawn uniformly from the
unit sphere ($x_\star=g/\|g\|$ with $g\sim\mathcal N(0,I_d)$). The clean measurements are
$b_i=\langle a_i,x_\star\rangle^2$, and we corrupt $5\%$ of them by replacing them with
large outliers $25\cdot\mathrm{Unif}(0,1)$.
The weak-convexity parameter of $f$ is
$
  \rho=\frac{2}{n}\lambda_{\max}(A^\top A),
  \bar\gamma=\frac{1}{2\rho}.
$
We use $\bar\gamma$ only to draw the fixed parameter baselines. APS
never uses $\rho$ or $\bar\gamma$.

{\sloppy
All runs start from the standard spectral initialization and report the
phase-retrieval error $\operatorname{dist}(x_k,\pm x_\star)$. The deterministic
experiments use a full-batch {\emph{high-accuracy}} proximal candidate, obtained by
running $10000$ subgradient steps from $x_k$ on
$\Phi_k(y)=f(y)+\frac{1}{2\gamma_k}\|y-x_k\|^2$ with stepsize $2\gamma_k/(t+2)$
and weighted averaging. The stochastic experiments replace this
with the stochastic proximal oracle (SPO) of \autoref{sec:stoch_spo}, using
$T=100$ minibatch subgradient steps, and use the stochastic difference oracle
(SDO) of \autoref{sec:stoch_sdo}. APS uses
$\sigma=\frac12$, $\beta_{\mathrm{inc}}=1/\beta_{\mathrm{dec}}=2$,
$\gamma_0=1$, $\varepsilon_{\mathrm{rej}}=10^{-10}$, and $\epsilon_f=0$.\par}

\paragraph{Comparison with fixed safe steps.}
\autoref{fig:aps_conv} compares APS with the fixed safe step $\bar\gamma=1/(2\rho)$
under the same budget for computing the proximal candidate. The fixed safe parameter baseline takes proximal steps with a fixed proximal parameter of $\bar\gamma$.
APS starts from the initial value $\gamma_0=1$ and uses no knowledge of $\rho$. It raises
$\gamma$ to take larger steps and makes more progress initially, and later makes smaller steps to keep converging  (here $\gamma$ climbs to ${\approx } 29\bar\gamma$ before decaying below $\bar\gamma$). As a result, the initial progress of APS is faster than that of the fixed safe parameter baseline, and its recovery error is smaller after $50$ iterations in the end.

\begin{figure}[tbp]
  \centering
  \includegraphics[width=0.6\linewidth]{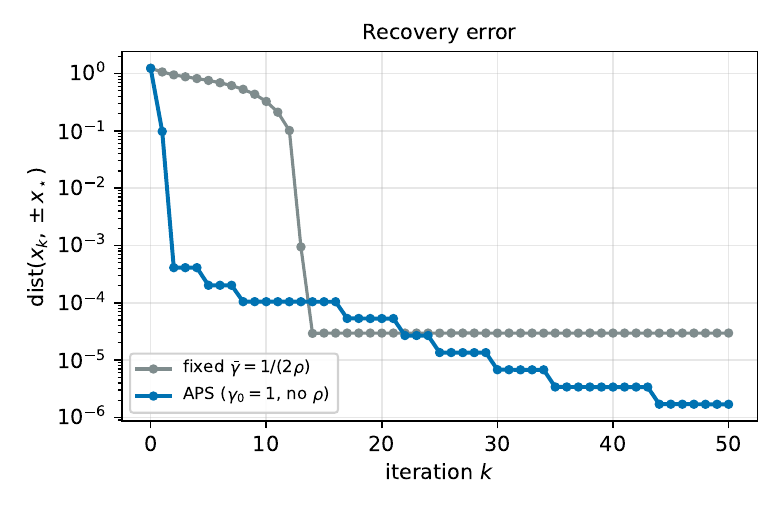}
  \caption{APS (which never uses $\rho$) versus the fixed safe step
  $\bar\gamma=1/(2\rho)$.}
  \label{fig:aps_conv}
\end{figure}

\paragraph{Sensitivity to the initial parameter.}
\autoref{fig:aps_stepsize} aggregates over $30$ random instances, in both the
deterministic and stochastic settings, the recovery error as a function of the initial proximal parameter $\gamma_0$. These curves fix the iteration budget at $K=50$, and both stochastic curves (fixed and APS) use minibatches of size $B=256$. 
For the method with a \emph{fixed} $\gamma$ the useful parameter range is narrow, since small parameter values make the progress slow and large parameter values make the progress unstable and can even cause divergence, with the
best region concentrated near the safe step $\bar\gamma$. The plot of APS, on the other hand, stays flat and low across the whole range, improving on the best fixed parameter without any tuning, in both deterministic and stochastic settings. 
 
\begin{figure}[tbp]
  \centering
  \includegraphics[width=0.66\linewidth]{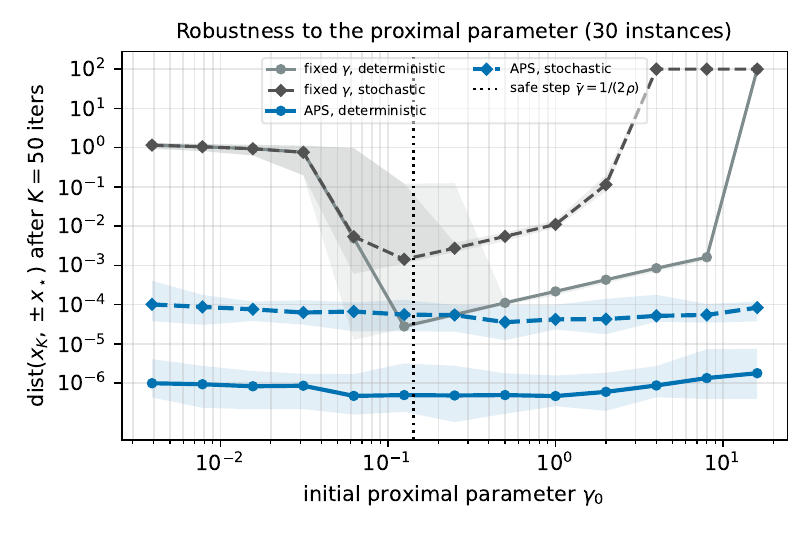}
  \caption{Robustness to the initial proximal parameter over $30$ random
  instances (medians after $K=50$ iterations; $10$--$90\%$ bands shaded for both methods).
 A \emph{fixed} $\gamma$ (gray) has a narrow useful window; APS (blue) stays flat and low throughout, showing its robustness to the initial choice of $\gamma_0$, without tuning or a
 weak-convexity estimate. The horizontal axis is the initial proximal parameter $\gamma_0$; the dotted line indicates the median safe $\bar\gamma$. The fixed parameter baselines have $\gamma=\gamma_0$ throughout, while APS starts with the initial value $\gamma_0$, and then adapts on its own.}
  \label{fig:aps_stepsize}
\end{figure}

\paragraph{Fully stochastic oracles.}
\autoref{fig:aps_stoch} illustrates the behavior under the stochastic setting.
It plots error against iteration at batch sizes $B\in\{64,128,256\}$, each run for
$K=50$ iterations. Each of the
$T=100$ inner subgradient steps of the SPO draws a fresh size-$B$ minibatch, and
the SDO draws another size-$B$ minibatch; thus APS costs
$(T{+}1)B$ samples per iteration versus $TB$ for the fixed parameter baseline, and larger
$B$ reduces the minibatch noise and reaches a lower error. The shaded areas are the $10$--$90\%$ spread over the $40$ runs, for both methods. Individual runs are non-monotone under the stochastic setting, but the medians move steadily. At every batch size the fixed safe parameter baseline plateaus, whereas APS adapts $\gamma$ downward as needed, and makes smaller steps to keep converging. APS reaches an error one to two orders of magnitude lower than that of the fixed safe parameter baseline. 
 
\begin{figure}[tbp]
  \centering
  \includegraphics[width=0.66\linewidth]{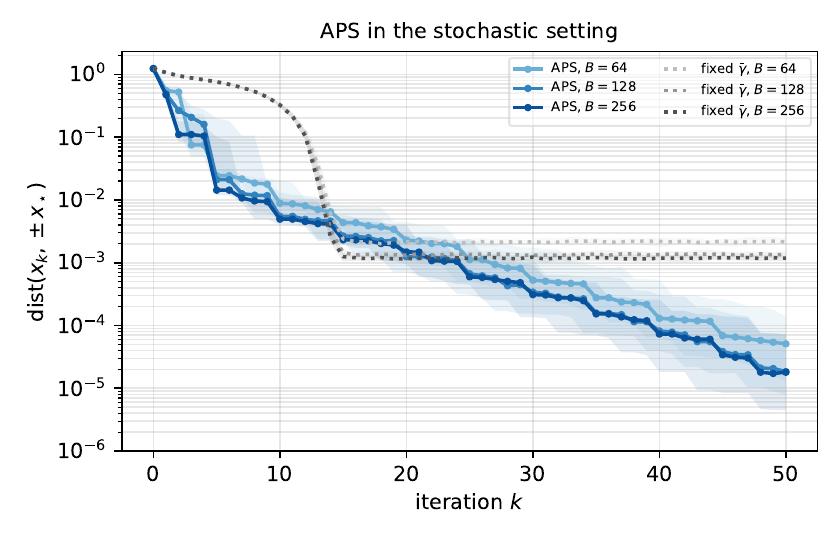}
  \caption{Fully stochastic setting (stochastic proximal and difference oracles), error versus iteration at batch sizes $B\in\{64,128,256\}$ (larger $B$ has less noise, $K=50$). APS (blue) adapts $\gamma$ down to keep converging and its recovery error stays one to two orders of magnitude below the fixed parameter baseline (gray), which plateaus.}
  \label{fig:aps_stoch}
\end{figure}
 
\FloatBarrier

\section{Conclusion}\label{sec:conclusion}

We introduced the Adaptive Prox-Guided Scheme (APS), an adaptive proximal framework for
weakly convex optimization with unknown weak-convexity parameter, requiring neither Lipschitz continuity nor smoothness of the objective. APS updates the proximal
parameter bidirectionally through a single accept/reject test, and never uses the safe threshold. The analysis splits
each run into safe and unsafe regimes, which are agnostic to the algorithm. We proved an $\mathcal O(\varepsilon^{-2})$ 
iteration complexity in the deterministic setting. In the stochastic setting, we proved a high-probability $\mathcal O(\varepsilon^{-2})$ iteration complexity under deliberately weak oracles, allowing biased, heavy-tailed function-difference
estimates and a candidate oracle that is reasonably accurate only with constant probability on safe
calls. We extended the analysis to the model-based setting, obtaining the same $\mathcal O(\varepsilon^{-2})$ iteration complexity in both the deterministic and stochastic regimes for the prox-linear, proximal-gradient, and any general model-based method falling under the framework.

\FloatBarrier

\bibliographystyle{plain}
\bibliography{references}

\end{document}